\definecolor{navy}{RGB}{20,0,105}
\renewcommand{\namecref}{\lcnamecref}
\DeclareMathOperator*{\argmax}{argmax}
\DeclareMathOperator{\Var}{Var}
\DeclareMathOperator{\FDP}{FDP}
\DeclareMathOperator{\FDR}{FDR}
\DeclareMathOperator{\BFDR}{BFDR}
\DeclareMathOperator{\postFDR}{postFDR}
\DeclareMathOperator{\FNR}{FNR}
\definecolor{blendedblue}{rgb}{0.2,0.2,0.7}
\newcommand{\arxivurl}[1]{\href{www.arxiv.org/abs/#1}{#1}}
\newcommand{\ind}[1]{\II\braces{#1}}
\newcommand{\vphi}{\varphi}
\newcommand{\eps}{\varepsilon}
\newcommand{\given}{\,|\,}
\newcommand{\be}{\beta}
\newcommand{\Ga}{\Gamma}
\newcommand{\la}{\lambda}
\newcommand{\te}{\theta}
\newcommand{\Cl}{{\textnormal{C}{\ensuremath{\ell}}}}
\newcommand{\qval}{{\ensuremath{q}\textnormal{-val}}}
\theoremstyle{definition}  \newtheorem*{definition*}{Definition}  \newtheorem*{definitions*}{Definitions}  
\theoremstyle{remark}  \newtheorem*{remark}{Remark}  \newtheorem*{remarks}{Remarks}
\theoremstyle{plain} \newtheorem{theorem}{Theorem} \newtheorem*{theorem*}{Theorem} \newtheorem{lemma}[theorem]{Lemma}  \newtheorem*{lemma*}{Lemma} 
\newcommand{\dif}{\mathop{}\!\mathrm{d}} 
\newcommand{\dt}{\dif t}       \newcommand{\dx}{\dif x}
   \newcommand{\NN}{\mathbb{N}}  \newcommand{\RR}{\mathbb{R}}   \newcommand{\II}{\mathbbm{1}} 
\newcommand{\Aa}{\mathcal{A}} \newcommand{\Bb}{\mathcal{B}} \newcommand{\Cc}{\mathcal{C}}           \newcommand{\Nn}{\mathcal{N}}            
\renewcommand{\em}{\it}
\newcommand{\EM}{\ensuremath}
\newcommand{\cG}{\EM{\mathcal{G}}}
\newcommand{\cN}{\EM{\mathcal{N}}}
\DeclarePairedDelimiter{\norm}{\lVert}{\rVert}
\DeclarePairedDelimiter{\abs}{\lvert}{\rvert}
\DeclarePairedDelimiter{\braces}{ \{ }{ \} }
\DeclarePairedDelimiter{\brackets}{(}{)}
\DeclarePairedDelimiter{\sqbrackets}{[}{]}
\crefname{appsec}{Appendix}{Appendices}
\crefname{assumption}{assumption}{assumptions}
\crefname{equation}{}{}
\crefname{enumi}{}{}
\newlist{lemenum}{enumerate}{1} 
\setlist[lemenum]{label=\alph*., ref=\arabic{theorem}\alph*}
\newlist{thmenum}{enumerate}{1} 
\setlist[thmenum]{label=\Alph*., ref=\arabic{theorem}\Alph*}
\begin{document}
\begin{frontmatter}
\title{Empirical Bayes cumulative $\ell$-value multiple testing procedure for sparse sequences}

\runtitle{$\Cl$-value multiple testing}

\begin{aug} 
\author{\fnms{Kweku} \snm{Abraham}%\thanksref{t3}
\ead[label=e1]{lkwa2@cam.ac.uk}},
\author{\fnms{Isma\"el} \snm{Castillo}%\thanksref{t1}
\ead[label=e2]{ismael.castillo@upmc.fr}}
\and
\author{\fnms{\'Etienne} \snm{Roquain}%\thanksref{t1}
\ead[label=e3]{etienne.roquain@upmc.fr}}
\address{University of Cambridge \\ Statistical Laboratory \\ Wilberforce Road, Cambridge CB3 0WB, UK\\
\printead{e1}}
\address{Universit\'{e} de Paris and Sorbonne Universit\'{e},\\
	CNRS, Laboratoire de Probabilit\'{e}s, Statistique et Mod\'{e}lisation,\\
	F-75013 Paris, France\\
\printead{e2,e3}}

%\thankstext{t1}{Some comment}
%\thankstext{t2}{First supporter of the project}
%\thankstext{t3}{Second supporter of the project}
\runauthor{K. Abraham, I. Castillo, E. Roquain}

\affiliation{}

\end{aug}

\begin{abstract}
In the sparse sequence model, we consider a popular Bayesian multiple testing procedure and investigate for the first time its behaviour from the frequentist point of view. Given a spike-and-slab prior on the high-dimensional sparse unknown parameter, one can easily compute posterior probabilities of coming from the spike, which correspond to the well known local-fdr values \cite{ETST2001}, also called $\ell$-values. The spike-and-slab weight parameter is calibrated in an empirical Bayes fashion, using marginal maximum likelihood.
The multiple testing procedure under study, called here the \emph{cumulative $\ell$-value procedure},  ranks coordinates according to their empirical $\ell$-values and thresholds so that the cumulative ranked sum does not exceed a user-specified level $t$. 
 We validate the use of this method from the multiple testing perspective:  for alternatives of appropriately large signal strength, the false discovery rate (FDR) of the procedure is shown to converge to the target level $t$, while its false negative rate (FNR) goes to $0$. We complement this study by providing convergence rates for the method. Additionally, we prove that the $q$-value multiple testing procedure \cite{Storey2003, CR18} shares similar convergence rates in this model.

\end{abstract}

\begin{keyword}[class=MSC]
\kwd[Primary ]{62G10 }
\kwd[Secondary ]{62C12	}
\end{keyword}

\begin{keyword}
\kwd{Bayesian nonparametrics, spike-and-slab priors, multiple testing, false discovery rate, empirical Bayes, local-fdr}
\end{keyword}

\tableofcontents

\end{frontmatter}

\section{Introduction}
%\kw{Should fix subject classifications on arxiv}
\subsection{Background}

Multiple testing problems are ubiquitous and encountered in applications as diverse as genomics, imaging, and astrophysics. The seminal paper of Benjamini and Hochberg \cite{BH1995} introduced the False Discovery Rate (FDR) as a criterion for multiple testing and provided a procedure controlling it, the so-called Benjamini--Hochberg procedure. Subsequent papers  adapted this procedure in different contexts \cite{BY2001,BKY2006,BR2009,RW2009,Liu2013gaussian,Ign2016,CDH2018,Dur2017,LF2016,DDR2018,bogdanslope15,BC2015,barber2019knockoff,javanmard2019false}. We focus here on another class of multiple testing procedures, also widely used in practice, consisting of empirical Bayesian procedures. 
These have been made popular in particular through the two-group model \cite{ETST2001} and a series of papers by Efron \cite{Efron2004, Efron2007, Efron2008}; see also  \cite{SC2007,AS2015,SS2018} for several extensions. More specifically, the local FDR (called $\ell$-value here) can be seen as a Bayesian quantity corresponding to the probability of being under the null distribution conditionally on the value of the test statistic. This probability is typically estimated by plugging in estimators of model aspects, which follows the general philosophy of empirical Bayes methods. Using $\ell$-values instead of $p$-values is often considered to be more powerful \cite{SC2007}, which explains the popularity of these significance measures in practical applications, including genomic data and biostatistics 
\cite{muelleretal04, encode2007,zablocki2014,JY2016,amar2017extracting, Steph2016, GS2018} 
 but also other applied fields, such as neuro-imaging as in e.g.\ \cite{lee2016}. 
In addition, the detection ability of $\ell$-values can be increased further by  adding structure on the null configurations via a latent model, such as a hidden Markov model \cite{SC2009,AGC20} or a stochastic block model \cite{RRV2019}, or via covariates \cite{SC2019}. 

Despite their popular practical use,  Bayesian multiple testing methods remain much less understood from the theoretical point of view than $p$-value based approaches.  Decision-theoretic arguments inspire most practical algorithms based on the Bayesian distribution (see among others $\ell$-, $\Cl$- and $q$-value procedures defined below). Such arguments are theoretically justified under the assumption that the data has been generated from a model which includes specific random modelling of the latent parameters, and this random modelling can be seen as a Bayesian prior. Yet, in practice, especially in sparsity problems, specification of prior aspects such as the number of effective parameters and the distribution of alternative means is delicate. In the frequentist-Bayes literature,  an alternative is to look for prior distributions that can be proved to have optimal or near-optimal behaviour from the frequentist point of view (see Section \ref{sec:twom} below for general references). This leads to the question of studying  Bayesian multiple testing procedures in the frequentist sense. 
From the perspective of multiple testing theory, the goal is to design procedures that are robust with respect to the latent modelling, which is in line with the classical strong error rate control \cite{Dic2014}.

While most of the literature on multiple testing for Bayesian methods has focused on latent variable modelling with a random `signal' parameter, we thus focus here on the case of any deterministic signal. There are very few works so far in this setting --- we present a brief literature review in Section \ref{sec:twom} --- and the present work can be seen as a continuation of \cite{CR18}. In that work, a family of spike-and-slab prior distributions was considered and frequentist properties of two multiple testing procedures were investigated in the sparse sequence model: the $\ell$-value procedure, where testing is based on the posterior probability that a given null hypothesis is true, and the $q$-value procedure, based on the Bayesian probability of the null given the hypothetical event that the data exceeds the value actually observed. A different  procedure very popular in practice is one based on cumulative ranked $\ell$-values, called the $\Cl$-value procedure below. This procedure was conjectured to have desirable frequentist properties in \cite{CR18}. The aim of the present paper is to confirm this conjecture: the \Cl-value procedure is studied here for the first time from the frequentist perspective in the setting of sparse deterministic hypotheses. We now proceed to introducing in more detail the model, the inferential goals, and the multiple testing procedures to be considered.

\subsection{Model, FDR and FNR} \label{sec:mod}
Notation introduced throughout the paper is collected in \cref{sec:notation} for the reader's convenience. 

{\em Model.} 
Consider the Gaussian sequence model, for $\te=(\te_1,\ldots,\te_n)\in\RR^n$,
\begin{equation}
\label{eqn:def:GaussianSequenceModel}
X_i=\theta_{i} + \eps_i, \quad 1\leq i \leq n,
\end{equation}
where the noise variables $(\eps_i)_{i\leq n}$ are assumed to be iid standard Gaussians $\cN(0,1)$, whose density we denote $\phi$. We assume that there exists a true (unknown) vector $\te_0\in\RR^n$ that is {\em sparse}: specifically, if 
\[\norm{\theta}_{\ell_0}:=\# \braces{1\leq i\leq n : \theta_i\not = 0}\]
denotes the number of non-zero coordinates of $\theta$, we assume 
 that $\theta_0\in \ell_0(s_n)$ for a sequence $s_n\to \infty$ satisfying $s_n/n\to 0$ as $n\to \infty$, where for $s\geq0$,
\begin{equation}\label{eqn:def:SparseSequenceDefinition}
\ell_0(s)=\braces{\theta\in\RR^n : \norm{\theta}_{\ell_0} \leq s}.
\end{equation}
The distribution of the data under the true $\theta_0$ is given by
 \[ P_{\te_0} = \bigotimes_{i=1}^n\,\cN(\te_{0,i},1),\]
 where $\te_0$ satisfies the sparsity constraint \eqref{eqn:def:SparseSequenceDefinition} but is otherwise arbitrary and non-random. 
  To make inference on $\theta$, we follow a Bayesian approach and endow $\te$ with a prior distribution $\Pi$. Using Bayes' formula one can then form the posterior distribution $\Pi[\cdot\given X]$, which is the conditional distribution of $\te$ given $X$ in the Bayesian framework. The choice of $\Pi$ (and the corresponding posterior $\Pi[\cdot\given X]$) will be specified in more detail in Section \ref{sec:prior} below. To assess the validity of inference using $\Pi[\cdot\given X]$, we study the behaviour of the latter --- or of aspects of it used to build a testing procedure --- in probability under the true frequentist distribution $P_{\te_0}$.\\

{\em Multiple testing inferential problem, FDR and FNR.} 
We consider the multiple testing problem of determining for which $i$ we have signal, that is, $\theta_{0,i}\not=0$. More formally, we analyse a procedure $\vphi(X)=(\vphi_i(X))_{1\le i\le n}$, taking values in $\{0,1\}^n$, that for each coordinate $i$ guesses whether or not signal is present. To evaluate the quality of such a procedure $\vphi$, one needs to consider certain risk or loss functions. Here we focus on the most popular such risks, defined as follows: the FDR (false discovery rate) is the average proportion of errors among the positives, while the FNR (false negative rate) is the average proportion of errors among the true non-zero signals. 

More precisely, first define  the false discovery proportion (FDP) at $\theta_0$ by
\begin{equation} \label{eqn:def:FDP}
%\FDP(\vphi)= 
\FDP(\vphi;\theta_0):= \frac{\sum_{i=1}^{n} \II\braces{\theta_{0,i}=0, \vphi_i=1}}{1\vee \brackets[\big]{\sum_{i=1}^{n} \vphi_i}}.
\end{equation}
Then the FDR at $\theta_0$ is given by
\begin{equation}\label{eqn:def:FDR}
%\FDR(\vphi)= 
\FDR(\vphi;\theta_0):= E_{\theta_0} [\FDP(\vphi;\theta_0)].
\end{equation}
Similarly, the false negative rate (FNR) at $\theta_0$ is defined as
\begin{equation}\label{eqn:def:FNR}
%\FNR(\vphi)=  
\FNR(\vphi;\theta_0):= E_{\theta_0}\sqbrackets[\bigg]{\frac{\sum_{i=1}^{n} \II\braces{\theta_{0,i}\neq 0, \vphi_i=0}}{1\vee\brackets[\big]{\sum_{i\leq n} \II\braces{\theta_{0,i}\neq 0}}}}.
\end{equation}
To use classical testing terminology, the FDR can be interpreted as a type I error rate, while the FNR corresponds to a type II error rate. The former is ubiquitous and the latter, with the current (non-random) choice of denominator, has been widely used in recent contributions, see, e.g., \cite{AC2017,RRJW2020}.

The aim of multiple testing in this setting is to find procedures that keep both type of errors under control. Inevitably, in the sparse setting in model \eqref{eqn:def:GaussianSequenceModel}, to achieve this will  require some signal strength assumption (see  \eqref{eqn:def:StrongSignalClass} below and the discussion in \cref{sec:optimality}).

\subsection{Empirical Bayes multiple testing procedures} \label{sec:proc}
 
 \subsubsection{Spike-and-slab prior distributions and empirical Bayes} \label{sec:prior}

{\em A family of prior distributions.} 
For $w\in(0,1)$, let $\Pi_w=\Pi_{w,\gamma}$ denote a spike-and-slab prior for $\theta$, where, for $\Ga$ a distribution with density $\gamma$,
\begin{equation}
\label{eqn:def:SpikeAndSlabPrior}
\Pi_w = ((1-w)\delta_0+w\Ga)^{\otimes n}. 
\end{equation} That is, under $\Pi_w$, the coordinates of $\theta$ are independent, and are either exactly equal to $0$, with probability $(1-w)$, or are drawn from the `slab' density $\gamma$. When the Bayesian model holds, the data $X$ follows a mixture distribution, with each coordinate $X_i$ independently having density $(1-w)\phi + w g$, where $g$ denotes the convolution $\phi \star \gamma$. This shares similarities with the well-known two-group model in the multiple testing literature \cite{ETST2001}: the only difference is that here the  alternative (i.e.\ the slab), is fixed a priori, {rather than estimated from the data}. 

In this work, we consider in particular a `quasi-Cauchy' alternative as in \cite{js05}, where $\gamma$ is defined in such a way that the convolution $g=\phi\star\gamma$ equals
\begin{equation} \label{eqn:def:gInQuasiCauchy} g(x)=(2\pi)^{-1/2}x^{-2}(1-e^{-x^2/2}),\:\:\: x\in\RR.
\end{equation}
Such a $\gamma$ indeed exists, and is given explicitly in \cite{js05}, eq.\ (4), but its explicit expression will not be of use to us here.

The references \cite{JS04,CR18} consider more generally a family of heavy-tailed distributions governed by a parameter $\kappa\in [1,2]$, for which the quasi-Cauchy alternative corresponds to $\kappa=2$, and we note that most of the calculations in the current paper work unchanged in the Laplace case $\kappa=1$. Some, however, require minor adjustment, and in particular, one should expect a slightly different rate of convergence of the FDR to $t$ in \cref{thm:FDRConvergenceWithRate,thm:QvalueControlsFDR} below.

The posterior distribution $\Pi_w(\cdot \mid X)$ can be explicitly derived as 
\begin{equation*}
\theta \:|\: X\,\sim\, \bigotimes_{i=1}^n\  \left(\ell_{i,w}(X)\, \delta_0 + (1-\ell_{i,w}(X))\, \cG_{X_i}\right),
\end{equation*}
where $\cG_{x}$ is the distribution  with density $\gamma_{x}(u) := \phi(x-u) \gamma(u)/g(x)$ and
\begin{equation}
\label{eqn:def:Lvals}
\begin{split}
\ell_{i,w}(X)&=\Pi_w(\theta_i=0 \mid X)= \ell(X_i;w),\quad 1\leq i\leq n,
\\ \ell(x;w) &= \frac{(1-w)\phi(x)}{(1-w)\phi(x) + w g(x)}\in (0,1), \:\:\:x\in\RR.
\end{split}
\end{equation} 

The quantities $\ell_{i,w}(X)$, $1\leq i \leq n$, are called the \emph{$\ell$-values}. Note that $w\to \ell_{i,w}(X)$ is decreasing. For short, we sometimes write $\ell_{i,w}$ for $\ell_{i,w}(X)$. In words, each $\ell_{i,w}(X)$ corresponds to the posterior probability that the measurement $X_i$  comes from the null, this probability being computed in the Bayesian model with the spike-and-slab prior \eqref{eqn:def:SpikeAndSlabPrior}. Let us underline that, in the usual multiple testing terminology of the two-group model, the posterior distribution $\ell_{i,w}(X)$ corresponds to the $i$th local fdr of the data, when the alternative density is $g$, the null density is $\phi$, and the proportion of true nulls is $1-w$, see, e.g.,   \cite{Efron2008}.

In the empirical Bayes framework, one first estimates $w$ empirically from the data using, for example, the maximum (marginal) likelihood estimator, defined as the maximiser (which exists almost surely, in view of \cref{lem:monotonicity}) 
\begin{equation} \label{eqn:def:wHat}
\hat{w}=\argmax_{w\in [1/n,1]} L(w),\end{equation}
where $L(w)$ denotes the marginal log-likelihood function for $w$, which can be expressed as
%\begin{equation}
%\label{eqn:def:logLikelihood}
%L(w)= \sum_{i=1}^n \log \phi(X_i)+ \sum_{i=1}^n \log(1+w \beta(X_i)), \quad \beta(x):=\tfrac{g}{\phi}(x)-1.\end{equation}
\begin{equation}
	\label{eqn:def:logLikelihood}
	L(w)= \sum_{i=1}^n \log \brackets[\big]{(1-w)\phi(X_i)+ wg(X_i)}. %\quad \beta(x):=\tfrac{g}{\phi}(x)-1.
\end{equation}
The resulting empirical Bayes (EB) posterior is simply $\Pi_{\hat w}[\cdot\given X]$. We highlight that the dependence on $w$ is a significant qualitative difference between $\ell$-values and their main alternative of $p$-values. Quantitatively, with an estimated $\hat{w}$, $\ell$-values under the null are expected to be close to $1$ (see, e.g., \cref{lem:ExistsLambda+-}), which is not the case for $p$-values.

Finding a maximiser $\hat{w}$ and simulating from this distribution, or calculating aspects such as the posterior mean or median, can be done in a fast and efficient way and has been implemented in the {\tt EBayesThresh} R package. From the theoretical perspective, a lot of progress has been made in the last few years in understanding the behaviour of the empirical Bayes posterior, in connection with the study of Bayesian procedures in sparsity settings, and we briefly review such results in Section \ref{sec:twom} below.

\subsubsection{Bayesian multiple testing procedures}\label{sec:Bayesian-multiple-testing-procedures}

The $\Cl$-value procedure with level $t\in (0,1)$, which is the main object of study herein, rejects the null hypothesis $H_{0,i} :$ ``$\theta_{0,i}=0$'' if $\ell_{i,\hat{w}}(X)<\hat{\lambda}(\hat{w},t)$, where, for $\hat{w}$ as in \cref{eqn:def:wHat},
\[ \hat{\lambda}(\hat{w},t)= \sup \braces[\Big]{\lambda \in [0,1] : \frac{\sum_{i=1}^n \ell_{i,\hat{w}}(X) \II\braces{\ell_{i,\hat{w}}(X)<\lambda}}{1\vee \sum_{i=1}^n \II\braces{\ell_{i,\hat{w}}(X)<\lambda}}\leq t}.\]
Let us recall the notions of the Bayesian FDR and posterior FDR under a prior distribution (as in e.g.\  \cite{sarkaretal08}) to place this definition in context and define the other two Bayesian multiple testing procedures of interest for this work.\\
 
{\em BFDR and postFDR.}
The Bayesian FDR is the FDR when instead of having a fixed $\theta_0$, the parameter $\theta$ is truly generated from the prior $\Pi_w$:
\begin{equation}\label{eqn:def:BFDR}
%\BFDR(\vphi)= \BFDR_{w,\gamma}(\vphi)
\BFDR_w(\vphi):=E_{\theta\sim \Pi_w} \FDR(\vphi;\theta),
\end{equation} 
and the posterior FDR is the BFDR conditional on $X$, or equivalently the expectation of the FDP when the parameter $\theta$ is drawn from the posterior $\Pi_w(\cdot \mid X)$: 
\begin{equation}\label{eqn:def:postFDR} \postFDR_w(\vphi) := E_{\theta\sim \Pi_w(\cdot \mid X)} [\FDP(\vphi;\theta)] %:= E_{\Pi_w(\cdot \mid X)} [\FDP(\vphi; \te)]
	=\frac{\sum_{i=1}^n \ell_{i,w}(X) \vphi_i}{1\vee(\sum_{i=1}^n \vphi_i)}. \end{equation}
Note that $\postFDR_w (\vphi)$ decreases as $w$ increases (for a fixed procedure $\vphi$), as a result of the monotonicity of the $\ell$-values (see \cref{lem:monotonicity}).\\

{\em $\ell$-value procedure.} 
Let us consider a family of multiple testing procedures $\vphi=\vphi_{\lambda,w}$ based on $\ell$-value thresholding as follows. For any given level $\lambda\in [0,1]$, set 
\begin{equation} \label{eqn:def:PhiLambda} 
	\vphi_{\lambda,w}(X)=(\II\braces{\ell_{i,w}(X)<\lambda})_{1\leq i\leq n}.
\end{equation}
The $\ell$-value procedure at level $t$ is then defined by $\vphi_{t,\hat{w}}(X)$, for $\hat{w}$ as in \cref{eqn:def:wHat}.\\

{\em \Cl-value procedure (reformulation).} 
 Given the collection of procedures \eqref{eqn:def:PhiLambda} for different thresholds $\la$, another way to choose $\lambda$ is to ensure the posterior FDR \eqref{eqn:def:postFDR} is controlled at a level as close as possible to the target level $t$. 
 This yields the \Cl-value procedure defined at the start of this section: %, which can also be viewed as an element of the class \cref{eqn:def:PhiLambda}
 with $\hat{w}$ as in \cref{eqn:def:wHat},
\begin{equation}\begin{split} 
		\vphi^{\Cl}&=\vphi_{\hat{\lambda},\hat{w}},\\ \label{eqn:def:lambdahat} \hat{\lambda}
		&=\hat{\lambda}(\hat{w},t)=\sup\braces{\lambda\in [0,1]: \postFDR_{\hat{w}} (\vphi_{\lambda,\hat{w}}) \leq t}.\end{split}
		\end{equation}
This is also a reformulation of the procedure considered in, e.g., \cite{muelleretal04,SC2007}. 
The original expression of $\vphi^{\Cl}$  {in these references (using cumulative sums rather than the level $\lambda$)} can be derived from the observation that we necessarily threshold at one of the observed $\ell$-values (i.e.\ at some $\ell_{i,\hat{w}}(X)$) since the posterior FDR only changes when we cross such a value. The threshold is $\hat{\lambda}=\ell_{(\hat{K}+1),\hat{w}}%(X)
$, with $\ell_{(i),\hat{w}}%(X)
$ denoting the $i$th order statistic of $\braces{\ell_{i,\hat{w}}(X): 1\leq i \leq n}$, and we therefore reject the null hypotheses for indices corresponding to the $\hat{K}$ smallest observed $\ell$-values,\footnote{In principle we define the order statistics so that repeats are allowed, defining them by the traits $\braces{\ell_{i,\hat{w}(X)
		}, i\leq n}=\braces{\ell_{(j),\hat{w}%(X)
		}, j\leq n}$ as a multiset ($\forall x\in\RR$, $\#\braces{i : \ell_{i,\hat{w}}%(X)
		=x}=\#\braces{i : \ell_{(i),\hat{w}}%(X)
		=x}$) and $\ell_{(1),\hat{w}}%(X)
	\leq \ell_{(2),\hat{w}}%(X)
	\leq \dots \leq \ell_{(n),\hat{w}}%(X)
	$. 
 When $\ell_{(\hat{K}),\hat{w}}%(X)
 =\ell_{(\hat{K}-1),\hat{w}}%(X)
 $ in fact $\vphi^\Cl$ as defined in \cref{eqn:def:lambdahat} rejects fewer than $\hat{K}$ hypotheses. However, with probability 1, the observed $\ell$ values are all distinct, due to the Gaussianity of $X_i$ and the strict increasingness of the map $x\mapsto \ell(x;w)$, see \cref{lem:monotonicity}.} where $\hat{K}$ is defined by
\begin{equation}
	\label{eqn:def:HatK}
	\frac{1}{\hat{K}} \sum_{i=1}^{\hat{K}} \ell_{(i),\hat{w}}%(X)
	 \leq t < \frac{1}{\hat{K}+1} \sum_{i=1}^{\hat{K}+1} \ell_{(i),\hat{w}}.%(X).
\end{equation}
(By convention the left inequality automatically holds in the case $\hat{K}=0$. If the right inequality is not satisfied for any $\hat{K}< n$, we set $\hat{K}=n$ and $\hat{\lambda}=1$.) Note that $\hat{K}$ is well defined and unique, by monotonicity of the average of nondecreasing numbers. This monotonicity also makes clear the following dichotomy, which will prove useful in the sequel: for all $t\in (0,1)$ and $\lambda\in [0,1]$,
\begin{equation}\label{eqn:HatLambdaPostFDRRelation} \postFDR_{\hat{w}}(\vphi_{\lambda,\hat{w}})\leq t \iff \lambda \leq \hat{\lambda}.\end{equation}
This indicates that the supremum in \eqref{eqn:def:lambdahat} is a maximum. Also observe that $\postFDR_{\hat{w}} (\vphi_{t,\hat{w}}) \leq t$, so that $\hat{\lambda}\geq t$ and the $\ell$-value procedure is always more conservative than the \Cl-value procedure.\\

{\em $q$-value procedure.} 
Another way to calibrate a procedure $\vphi_i=\ind{|X_i|\geq x}$ in order to control the (B)FDR is to further simplify the expectation of a ratio defining the BFDR and instead consider the ratio of expectations, defining for $x\in\RR$ and $w\in[0,1]$
\begin{equation}\label{eqn:def:Qfunction} \begin{split}
q(x;w)&=\frac{E_{\theta\sim \Pi_w} \sum_{i=1}^n \ind{\theta_i=0} \ind{\abs{X_i}\geq \abs{x}}  }{E_{\theta\sim \Pi_w} \sum_{i=1}^n  \ind{\abs{X_i}\geq \abs{x}} } \\ &=\frac{(1-w)\overline{\Phi}(\abs{x})}{(1-w)\overline{\Phi}(\abs{x})+w\overline{G}(\abs{x})},
\end{split}
\end{equation}
 where $\overline{\Phi}$ and $\overline{G}$ denote the upper tail functions of the densities $\phi$ and $g$ respectively. 
 The $q$-values are then given by 
 \begin{equation}
\label{eqn:def:qvals}
q_{i,w}(X)= q(X_i;w)=\frac{(1-w)\overline{\Phi}(|X_i|)}{(1-w)\overline{\Phi}(|X_i|)+w\overline{G}(|X_i|)}, \quad 1\leq i\leq n,
\end{equation} 
and the $q$-value procedure is defined by thresholding the $q$-values at the target level $t>0$: 
\begin{equation} \label{eqn:def:Phiqvalue} 
	\vphi^{\qval}(X)=(\II\braces{q_{i,\hat{w}}(X)< t})_{1\leq i\leq n}.
\end{equation}
Thanks to monotonicity of both the $q$ and $\ell$ values (see Lemma~\ref{lem:monotonicity}) $\vphi^{\qval}$ lies in the class \eqref{eqn:def:PhiLambda}, so that $\vphi^{\qval}=\vphi_{\la_q,\hat{w}}$ for some $\la_q=\la_q(\hat{w},t)$. As with the $\ell$-values, we sometimes write $q_{i,w}$ for $q_{i,w}(X)$. \\ 

{\em Rationale behind these procedures for FDR control.} Let us now give some intuition behind the introduction of such procedures. Consider $\vphi_{t,w},$ $\vphi_{\hat{\la}(w,t),w},$ and $\vphi_{\la_q(w,t),w}$; that is, the $\ell$-, \Cl- and $q$-value procedures respectively, but with a {\em fixed} value of $w$. All three control the Bayesian FDR (BFDR) at level $t$ under the prior $\Pi_w$: for the first and third procedures, see Proposition 1 in \cite{CR18}; for the \Cl-value procedure with fixed $w$,  since $\postFDR_{w} (\vphi_{\hat{\lambda}(w,t),w}) \leq t$, we directly have $\BFDR_w(\vphi_{\hat{\lambda}(w,t),w}) \leq t$ by taking expectations. Moreover, by concentration arguments and appealing again to \cite[Proposition 1]{CR18}, we have $\BFDR_w(\vphi_{\hat{\lambda}(w,t),w}) \approx t$ and $\BFDR_w(\vphi_{\lambda_q(w,t),w})\approx t$. Hence, from the decision-theoretical perspective, if the prior $\Pi_w$ is ``correct'', these procedures are {\em bona fide} for the purpose of controlling the BFDR. Note that this says nothing when the procedures are constructed using a random $w$ which is typically what is done in practice (as in \cref{eqn:def:wHat}). %, corresponding to the fact that there are few cases where there is a ``strong'' a priori knowledge on the sparsity parameter, so that it needs to be estimated from the data. 
In addition, to derive frequentist properties, the procedure has to be evaluated under a fixed truth $\te_0$, which makes it even further from the previous decision-theoretic argument. Yet, one can expect that for $n$ large, $\hat{w}$ (and consequently the plug-in posterior $\Pi_{\hat{w}}[\cdot\given X]$) concentrates in an appropriate way, giving the hope, validated by \cref{thm:FDRConvergenceNoRate} below for the $\Cl$-value procedure with strong signals, that the frequentist FDR at $\te_0$ can still be controlled.  \\

%
%{\em $q$-value and \Cl-value: some differences.} 
%Note that, as originally introduced by \cite{Storey2003}, $q(x)$ corresponds to $P_{(\theta,X)}(\theta_i=0 \:|\: |X_i|\geq x)$. 
%Hence, the $q$-value $q_{i,w}(X)$ corresponds to the conditional probability $q_{i,w}(X(\omega))= P_{\theta\sim \Pi_w}(\theta_i=0 \:|\: |X_i|\geq |X_i(\omega)|)$. %\red{corresponds to $q_i(X)=P'_{\theta'\sim \Pi_w'}(\theta_i'=0 \mid \abs{X_i'}\geq \abs{X_i}),$ where $\theta',X'$ are independent copies of $\theta,X$, and $P'$ denotes that we take expectation with respect to these primed variables only.} 
%Nevertheless, it is not based solely on the posterior $\Pi_w(\cdot \mid X)$ but rather on the joint distribution of $(\theta,X)$: in the conditioning, the event $|X_i|\geq |X_i(\omega)|$ involves measures $X_i$ more extreme than the observed one $X_i(\omega)$.   By contrast, the \Cl-value procedure depends only on the observed event and not on other events that one hypothetically could have observed. 
%{From a philosophical point of view, it follows that while both procedures adhere to multiple testing principles, the \Cl-value procedure more closely aligns with Bayesian principles. This potentially also has positive implications for computation, since the $\Cl$-value procedure can be calculated directly from $\ell$-values, while computation of $q$-values must be done separately and can be more involved for more complicated priors/models.

\section{Main results} \label{sec:mainres}

\subsection{Consistency}

Let us define a `strong signal class' of parameters with exactly $s_n$ non-zero entries, each of which is ``large''. For $\theta_0\in\ell_0(s_n)$, denote by $S_0$ the support of $\theta_0$,
\begin{equation}\label{eqn:def:S0} S_0=\braces{i : \theta_{0,i}\neq 0}.\end{equation}
For a sequence $v_n\to \infty$ we define the strong signal class
\begin{equation}  \label{eqn:def:StrongSignalClass}
	 \ell_0(s_n;v_n) =
\braces[\big]{ \theta_0\in \ell_0(s_n) \: : \: \abs{\theta_{0,i}} \geq \sqrt{2\log(n/s_n)}+v_n  \text{ for } i\in S_{0},~ \abs{S_{0}}=s_n }.
\end{equation}
\begin{theorem}\label{thm:FDRConvergenceNoRate}

Fix $t\in(0,1)$. Consider any sequence $s_n\to \infty$ such that $s_n/n\to 0$, and any sequence $v_n\to \infty$.
	 Then, as $n\to \infty$, \begin{align}\label{eqn:FDRTendsToT} 
	 \sup_{\theta_0\in \ell_0(s_n,v_n)}|\FDR(\vphi^{\Cl};\theta_0) -t|&\to 0,\\
	 	\sup_{\theta_0\in \ell_0(s_n,v_n)} \FNR(\vphi^{\Cl};\theta_0) &\to 0. \label{eqn:FNRTendsTo0}
	 	\end{align} 

\end{theorem}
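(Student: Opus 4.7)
The plan is to reduce the data-driven pair $(\hat w, \hat\lambda)$ to deterministic proxies on a high-probability event, and then extract the FDR conclusion from the structural constraint $\postFDR_{\hat w}(\vphi^\Cl) \leq t$ built into the procedure. First I would invoke the concentration of the marginal MLE $\hat w$ from the sparse-sequence literature (in the spirit of \cite{JS04,CR18}) to show that, uniformly over $\theta_0 \in \ell_0(s_n, v_n)$, with $P_{\theta_0}$-probability tending to one, $\hat w$ lies in a deterministic window $[w_n^-, w_n^+]$ of order $s_n/n$ up to logarithmic factors. Combined with the monotonicity of $w \mapsto \ell(x;w)$ from \cref{lem:monotonicity}, this lets every random $\ell$-value be sandwiched between the deterministic proxies $\ell(X_i; w_n^\pm)$.

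The FNR conclusion \eqref{eqn:FNRTendsTo0} then follows quickly from the strong-signal hypothesis. A union bound over the $s_n$ signal coordinates with standard Gaussian tail inequalities gives $|X_i| \geq \sqrt{2\log(n/s_n)} + v_n/2$ simultaneously for all $i \in S_0$ with probability $1 - o(1)$. On this event, combined with $\hat w \geq w_n^-$, the explicit form of $\ell$ in \eqref{eqn:def:Lvals} together with the tail behaviour $g(x) \sim (2\pi)^{-1/2} x^{-2}$ coming from \eqref{eqn:def:gInQuasiCauchy} yields $\ell_{i,\hat w}(X) = o(t)$ uniformly over $i \in S_0$. Since $\hat\lambda \geq t$ always, every signal is rejected by $\vphi^\Cl$, so $\FNR(\vphi^\Cl;\theta_0) = 0$ on the good event, and bounding the complement gives \eqref{eqn:FNRTendsTo0}.

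For the FDR I would exploit that, via the maximality in \eqref{eqn:def:lambdahat} and the dichotomy \eqref{eqn:HatLambdaPostFDRRelation}, one has $\postFDR_{\hat w}(\vphi^\Cl) = t - o(1)$ once $\hat K \to \infty$. Writing $V = \sum_{i \notin S_0} \vphi^\Cl_i$ and $R = s_n + V$ (using that all signals are rejected), the signal part of the numerator $\sum_{i \in S_0} \ell_{i,\hat w} \vphi^\Cl_i = o(s_n)$ is negligible, so the postFDR identity reduces to
\[
\sum_{i \notin S_0} \ell_{i,\hat w}(X) \ind{\ell_{i,\hat w} < \hat\lambda} = t R + o(s_n).
\]
To close the loop I would compare empirical sums to their null-model expectations: for $X \sim \cN(0,1)$, set $F_w(\lambda) = P(\ell(X;w) < \lambda)$ and $M_w(\lambda) = E[\ell(X;w) \ind{\ell(X;w) < \lambda}]$, and use empirical-process arguments (Bernstein/Talagrand-type inequalities, uniform in $\lambda$ and over the concentration range of $\hat w$) to show $V = (n-s_n) F_{\hat w}(\hat\lambda)(1 + o_P(1))$ and an analogous statement with $M_{\hat w}$ in place of $F_{\hat w}$. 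Together with the display above, this determines $\hat\lambda$ as a near-solution of $(n-s_n) M_{w^\ast}(\lambda) = t[s_n + (n-s_n) F_{w^\ast}(\lambda)]$ for some $w^\ast \asymp s_n/n$. The quasi-Cauchy tail of $g$ makes the corresponding $\lambda^\ast$ tend to one, hence $M_{w^\ast}(\lambda^\ast)/F_{w^\ast}(\lambda^\ast) \to 1$, which yields $V = \tfrac{t}{1-t} s_n (1 + o_P(1))$ and so $\FDP = V/R = t + o_P(1)$. Since $\FDP \leq 1$, bounded convergence upgrades this to $\FDR \to t$, uniformly over $\theta_0 \in \ell_0(s_n, v_n)$.

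I expect the main obstacle to be the uniform-in-$(w, \lambda)$ control of the empirical processes defining $V$ and $\sum_{i\notin S_0} \ell_{i,\hat w} \ind{\ell_{i,\hat w} < \hat\lambda}$, since $(w, \lambda)$ sits in a tail regime where both $F_w(\lambda)$ and $M_w(\lambda)$ are of order $s_n/n$ and standard central-limit type control is not strong enough: one needs sharp relative-error concentration valid in this small-probability regime, with constants explicit enough to give the required uniformity over the strong signal class.
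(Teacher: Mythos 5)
Your high-level plan tracks the paper's strategy closely (concentrate $\hat{w}$ in a deterministic window, exploit strong signals for the FNR, read off the FDR from the postFDR identity $\postFDR_{\hat w}(\vphi^\Cl)\approx t$), but there is one genuine gap and one structural difference worth flagging.

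The gap is in the FNR step. You argue that a union bound over the $s_n$ signal coordinates gives $|X_i|\geq\sqrt{2\log(n/s_n)}+v_n/2$ simultaneously for \emph{all} $i\in S_0$ with probability $1-o(1)$, so that every signal is rejected and $\FNR=0$ on a good event. That union bound needs $s_n\,\bar\Phi(v_n/2)\to 0$, i.e.\ $v_n\gtrsim\sqrt{\log s_n}$, which the theorem does not assume: it only requires $v_n\to\infty$, possibly as slowly as $\log\log n$. For slowly growing $v_n$ a nonvanishing \emph{fraction}-in-expectation but still $o(1)$-fraction of signals will have $|\eps_i|>v_n/2$, and your all-signals-detected claim fails. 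The paper handles this by \emph{counting} rather than excluding: it defines $K_n=\#\{i\in S_0:\ell_{i,w_-}>\delta_n\}$ and shows (via the binomial $N=\#\{i\in S_0:|\eps_i|>v_n/2\}$ and Bernstein) that $K_n/s_n\leq\rho_n+\nu_n$ with high probability, with $\rho_n=e^{-v_n^2/9}$. This gives $\FNR\lesssim \rho_n+\nu_n\to 0$ without needing every signal to be rejected, and the same bounded deficit $K_n$ is carried through the FDR accounting. Your FDR step also implicitly assumes all signals rejected ("$R=s_n+V$"), so it inherits the same issue, though there it would only shift the limit by an $o(1)$ quantity once $K_n/s_n\to 0$ is established.

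On the obstacle you flagged honestly at the end — uniform-in-$(w,\lambda)$ control of the empirical processes behind $V$ and $\sum_{i\notin S_0}\ell_{i,\hat w}\II\{\ell_{i,\hat w}<\hat\lambda\}$ in a regime where both are $O(s_n/n)$: the paper bypasses it entirely. Rather than proving uniform concentration and reading off a near-solution $\lambda^\ast$, it first \emph{defines} explicit deterministic brackets $\lambda_\pm$ as solutions to slightly perturbed versions of the limiting equation, one chosen to strictly overshoot and one to strictly undershoot (equations \eqref{eqn:def:lambda+}--\eqref{eqn:def:lambda-}), and then proves $\postFDR_{\hat w}(\vphi_{\lambda_+,\hat w})>t$ and $\postFDR_{\hat w}(\vphi_{\lambda_-,\hat w})\leq t$ with high probability using Bernstein only at those two fixed thresholds; the dichotomy \eqref{eqn:HatLambdaPostFDRRelation} then gives $\hat\lambda\in[\lambda_-,\lambda_+]$. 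Finally, monotonicity of $\ell_{i,w}$ in $w$ and of $V_{\lambda,w}$ in $(\lambda,w)$ sandwiches the random false-discovery count between $V_{\lambda_-,w_-}$ and $V_{\lambda_+,w_+}$, both fixed binomials, so only four pointwise Bernstein bounds are needed in total. This is the key device you would want to adopt: it replaces a genuinely delicate uniform relative-error concentration by two carefully placed deterministic cut points plus monotonicity.
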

Let us emphasise that the conclusion of \cref{thm:FDRConvergenceNoRate} does not mention the prior, holding for any deterministic $\theta_0$ in the strong signal class, not only for non-zero entries of $\theta_0$ drawn from the quasi-Cauchy distribution \cref{eqn:def:gInQuasiCauchy}. Moreover, this frequentist consistency result holds \emph{uniformly} across the strong signal set $\ell_0(s_n;v_n)$. The assumption $v_n\to \infty$ cannot be relaxed, as we discuss in \cref{sec:optimality}.
%
%The key novelty required in the proof of \cref{thm:FDRConvergenceNoRate} relative to the proofs in \cite{CR18} for the $\ell$-value and $q$-value procedures is that, as a result of the ``doubly empirical'' nature of the $\Cl$-value procedure, we must not only control the size and impact of fluctuations of $\hat{w}$ about some central value $w^*$, but also of $\hat{\lambda}$ around some $\lambda^*$.
%
%Another key novelty is the weakening of conditions on $v_n$ and on $s_n$. In \cite{CR18} it is assumed that there exists some $\nu<1$ for which $s_n\leq n^\nu$, but we are able to prove \cref{thm:FDRConvergenceNoRate} without this `polynomial sparsity' condition. 
%The boundary assumption of \cite{CR18} is equivalent to granting that $v_n\geq b(\log (n/s_n))^{1/2}$ for $b>0$, whereas here we assume only that $v_n\to \infty$. This new condition is sharp: see the discussion in \cref{sec:optimality}. \kw{changed}
\subsection{Convergence rate}\label{sec:QValueRate}

The following result  strengthens the conclusion of \cref{thm:FDRConvergenceNoRate}, showing that the FDR converges to $t$ from above and obtaining a precise rate of convergence, at the cost of requiring mild extra conditions on $s_n$ and $v_n$.

\begin{theorem}\label{thm:FDRConvergenceWithRate}
	In the setting of \cref{thm:FDRConvergenceNoRate}, assume also that \[s_n\geq (\log n)^3,\qquad v_n\geq 3(\log \log (n/s_n))^{1/2}.\]	
Then there exist constants $c,C,C'>0$ depending on $t$ such that uniformly over $\theta_0\in \ell_0(s_n;v_n)$, for all $n$ large enough we have 
	\begin{align}\label{eqn:FDRUpperAndLowerBound}c\frac{\log\log(n/s_n)}{\log(n/s_n)} \leq  \FDR(\vphi^{\Cl};\theta_0)-t &\leq C\frac{\log\log(n/s_n)}{\log(n/s_n)},\\
		\label{eqn:FNRControl} 	\FNR(\vphi^{\Cl};\theta_0)&\leq C' \brackets{\log \tfrac{n}{s_n}}^{-1}.
	\end{align}
	\end{theorem}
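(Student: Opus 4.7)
The plan is to quantitatively refine every step of the proof of \cref{thm:FDRConvergenceNoRate}. The starting point is the identity, valid by construction of $\vphi^{\Cl}$ up to an $O(1/n)$ discretization error arising from $\hat{K}$ being integer-valued:
\[\postFDR_{\hat{w}}(\vphi^{\Cl}) = t + O(1/n).\]
Subtracting this from $\FDP(\vphi^{\Cl}) = D^{-1}\sum_{i \in S_0^c} \vphi^{\Cl}_i$ (with $D = 1 \vee \sum_i \vphi^{\Cl}_i$) yields the fundamental decomposition
\[\FDP(\vphi^{\Cl}) - t = \frac{1}{D}\sum_{i \in S_0^c}(1 - \ell_{i,\hat{w}})\vphi^{\Cl}_i - \frac{1}{D}\sum_{i \in S_0}\ell_{i,\hat{w}}\vphi^{\Cl}_i + O(1/n).\]
Both bracketed sums are nonnegative; the first (null contribution) should carry the main rate while the second (signal contribution) will be shown to be of much smaller order under strong signals.

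First I would sharpen the concentration of $\hat{w}$ already invoked in \cref{thm:FDRConvergenceNoRate}, pinning $\hat{w}$ to a narrow window $[w_-, w_+]$ around $s_n/n$ up to polylog factors, with failure probability $o(1/\log(n/s_n))$. By monotonicity of $\ell(\cdot;\cdot)$ in both arguments (\cref{lem:monotonicity}), this forces $\hat{\lambda}$ to be close to a deterministic value $\lambda^*$ satisfying the implicit equation obtained by replacing empirical sums with expectations in the definition of $\hat{\lambda}$. Solving this implicit equation, using the explicit form of $\ell(x;w)$, the quasi-Cauchy form of $g$, and the Gaussian tail expansion $\bar\Phi(u)\sim \phi(u)/u$, locates a threshold $u^*$ on $|X_i|$ satisfying $(u^*)^2/2 = \log(n/s_n) + O(\log\log(n/s_n))$ and yielding $1 - \lambda^* \asymp 1/\log(n/s_n)$.

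For the null contribution, conditional on rejection ($|X_i|>u^*$), the excess $|X_i|-u^*$ is approximately exponential with rate $u^*$; integrating $1-\ell(x;\hat{w})$ against this tail distribution, the average of $1-\ell_{i,\hat{w}}$ over rejected nulls equals $\beta^*\log(1/\beta^*)(1+o(1))$, where $\beta^* = (1-\hat{\lambda})/\hat{\lambda}$. Combined with $\beta^*\asymp 1/\log(n/s_n)$ from the previous step, this delivers the target rate $\log\log(n/s_n)/\log(n/s_n)$ in both directions, up to constants depending on $t$. For the signal contribution, the strong-signal assumption together with $v_n\geq 3\sqrt{\log\log(n/s_n)}$ forces $\ell_{i,\hat{w}} = \exp(-\Omega(v_n\sqrt{\log(n/s_n)}))$ uniformly over $i\in S_0$ on a high-probability event, which is dwarfed by the null term.

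For the FNR bound, a Gaussian union bound gives $\max_{i\in S_0}|\eps_i|\leq \sqrt{2\log s_n}$ with high probability; combined with the signal strength $|\theta_{0,i}|\geq \sqrt{2\log(n/s_n)}+v_n$, this places every $|X_i|$ for $i\in S_0$ safely above the rejection threshold $u^*$ (using $s_n\geq (\log n)^3$ to ensure $\sqrt{2\log s_n}$ is much smaller than the relevant margin). Every signal is then discovered except on an event of probability $O(1/\log(n/s_n))$, which controls the FNR at the claimed rate. The main obstacle is the null-contribution analysis: matching the constants in both the upper and lower bounds requires two-sided tail expansions for the null order statistics beyond leading order, together with careful treatment of the interplay between the random $\hat{w}$, the induced threshold $\hat{\lambda}$, and the null extremes, as a crude first-order computation is sensitive to polylog factors in the concentration of $\hat{w}$.
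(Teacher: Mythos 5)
Your decomposition $\FDP(\vphi^{\Cl})-t = D^{-1}\sum_{i\in S_0^c}(1-\ell_{i,\hat w})\vphi^{\Cl}_i - D^{-1}\sum_{i\in S_0}\ell_{i,\hat w}\vphi^{\Cl}_i + O(1/\hat K)$ is a valid and elegant reformulation (note the discretization error is $O(1/\hat K) = O_p(1/s_n)$, not $O(1/n)$, but that is still $o(\eps_n)$ here). Your computation of the null contribution via the asymptotically exponential excess over the threshold, giving an average of $1-\ell$ of order $\beta^*\log(1/\beta^*)$ with $\beta^*\asymp 1/\log(n/s_n)$, is a nice elementary derivation of exactly what the paper proves in \cref{lem:ExpectationsSlowlyTo1}, namely $1-E_{\theta_0=0}[\ell_{1,w}\mid\ell_{1,w}<\lambda]\asymp(1-\lambda)\log(1/(1-\lambda))$, combined with $1-\hat\lambda\asymp\delta_n$ from \cref{lem:ExistsLambda+-}. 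To actually control the expectation of your random ratio you will still need the same concentration ingredients the paper develops (for $\hat w$, $\hat\lambda$, and $V_{\lambda,w}$), so the underlying technical content matches even if the packaging is different.

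The FNR argument and the signal-contribution bound, however, contain a genuine gap. You invoke the union bound $\max_{i\in S_0}\abs{\eps_i}\leq\sqrt{2\log s_n}$ and claim that $s_n\geq(\log n)^3$ makes $\sqrt{2\log s_n}$ ``much smaller than the relevant margin.'' This is backwards: a \emph{lower} bound on $s_n$ makes $\sqrt{2\log s_n}$ \emph{larger}, not smaller. The relevant margin is governed by $v_n$, which is only assumed to satisfy $v_n\geq 3(\log\log(n/s_n))^{1/2}$. Taking, say, $s_n\asymp n^{1/2}$, one has $\sqrt{2\log s_n}\asymp\sqrt{\log n}$, vastly exceeding $v_n$, so the union bound does not place every signal above the rejection threshold, and the claim that $\ell_{i,\hat w}$ is $\exp(-\Omega(v_n\sqrt{\log(n/s_n)}))$ uniformly over $i\in S_0$ is false. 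Under these assumptions some signals will in fact be missed; the theorem's FNR bound is $C'/\log(n/s_n)$, not zero, precisely for this reason. The correct argument (see \cref{lem:lvalsSmallForTrueSignals}) must allow a small but nonzero fraction of signals with large $\ell$-values: the number of indices $i\in S_0$ with $\abs{\eps_i}>v_n/2$ is Binomial$(s_n,\,2\bar\Phi(v_n/2))$, whose mean $\lesssim s_n\rho_n$ with $\rho_n=e^{-v_n^2/9}$; the lower bound on $v_n$ then gives $\rho_n\leq\delta_n=1/\log(n/s_n)$, and Bernstein's inequality controls the fluctuations. The same quantity also bounds the signal-contribution term in your decomposition, which is therefore $O(\rho_n+\nu_n)=o(\eps_n)$, not exponentially small.
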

	
	\begin{remarks}
		\begin{enumerate}[i.]
		\item This result shows that the convergence rate of $\FDR(\vphi^{\Cl};\theta_0)$ to $t$ is logarithmic in $n/s_n$ and uniform over $\ell_0(s_n;v_n)$. In particular, note that increasing the signal strength $v_n$ does not accelerate the convergence rate of the FDR. By contrast, we provide no lower bound for the FNR in \eqref{eqn:FNRControl}, so the convergence rate of the FNR to zero can (and will) be much faster for larger $v_n$.
		\item In fact we prove the stronger false discovery \emph{proportion} result (implying \cref{eqn:FDRUpperAndLowerBound}) that for some $c,C>0$, writing $\eps_n=\log\log(n/s_n)/(\log (n/s_n))$ we have \[c\eps_n\leq \FDP(\vphi^{\Cl};\theta_0)-t\leq C \eps_n, \text{ with probability at least } 1-o(\eps_n),\]  and correspondingly for the false negative proportion. 
			\item The bound $s_n\geq (\log n)^3$ can be relaxed to 
			$s_n\geq b(\log n)^2/\log\log n$ for some large enough constant $b=b(t)$: see \cref{lem:c1alpha}. %The stronger assumption made in \cref{thm:FDRConvergenceWithRate} simplifies proofs by reducing the need to track constants.
		\end{enumerate}
	\end{remarks}

%Let us compare this result to the $q$-value procedure.
%It was proved in \cite{CR18} that, in a similar setting to that of   \cref{thm:FDRConvergenceNoRate}, the analogous conclusion holds for the $q$-value procedure: it has FDR tending to $t$ (Theorem~3 therein) and FNR converging to $0$ (Theorem~4 therein). No convergence rate was provided. In the simulations of \cite{CR18}, the FDR of the \Cl-value procedure seems larger than that of the $q$-value procedure, which could suggest that the convergence rate of the \Cl-value procedure is slower, as might be an expected consequence of the fact that the \Cl-value procedure uses the two empirical quantities $\hat{w}$ and $\hat{\lambda}$ while the $q$-value procedure uses only $\hat{w}$.
%The following result shows that this intuition is not correct: the two procedures have the same convergence rate for the FDR.%, so that the difference can only have been in the constants. 

Let us now turn to study the $q$-value procedure. The next result shows that its behaviour matches that of the \Cl-value procedure.

\begin{theorem}	\label{thm:QvalueControlsFDR}
	\Cref{thm:FDRConvergenceNoRate,thm:FDRConvergenceWithRate} continue to hold when the $\Cl$-value procedure $\vphi^{\Cl}$ is replaced by the $q$-value procedure $\vphi^{\qval}$.
%	In the setting of \cref{thm:FDRConvergenceNoRate}, we have both 
%	 $\sup_{\theta_0\in \ell_0(s_n,v_n)}|\FDR(\vphi^{\qval};\theta_0) -t|\to 0$
%	and $\sup_{\theta_0\in \ell_0(s_n,v_n)}\FNR(\vphi^{\qval};\theta_0) \to 0$.
%	Furthermore, in the setting of \cref{thm:FDRConvergenceWithRate} 
%	there exist constants $c,C,C'>0$ depending on $t$ such that uniformly over $\theta_0 \in \ell_0(s_n;v_n)$, for all $n$ large enough we have
%	\begin{align}\label{eqn:FDRUpperBoundQvalue} 
%		c %t(1-t)
%		\frac{\log\log (n/s_n)}{\log(n/s_n)} \leq 	%\sup_{\theta_0 \in \mathcal{L}_0[s_n,a] } \braces{\FDR(\vphi^{q\text{-val}};\theta_0)-t }
%		\FDR(\vphi^{\qval};\theta_0)-t
%		&\leq C %t(1-t)
%		\frac{\log\log (n/s_n)}{\log(n/s_n)};\\
%		\FNR(\vphi^{\qval};\theta_0)&\leq C' \brackets[\big]{\log \tfrac{n}{s_n}}^{-1}.\label{eqn:FNRUpperBoundQvalue} 
%	\end{align} 
\end{theorem}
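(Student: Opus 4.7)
The key observation is that, because the map $x\mapsto q(x;w)$ is monotone decreasing in $|x|$ (see \cref{lem:monotonicity}), the $q$-value procedure is equivalent to thresholding $|X_i|$ at the level $\eta_q=\eta_q(\hat{w},t)$ defined by $q(\eta_q;\hat{w})=t$, and correspondingly $\vphi^{\qval}=\vphi_{\lambda_q,\hat{w}}$ with $\lambda_q=\ell(\eta_q;\hat{w})$. The plan is to show that $\eta_q$ satisfies estimates of the same form as those established for the random threshold associated to $\hat{\lambda}$ in the proofs of \cref{thm:FDRConvergenceNoRate,thm:FDRConvergenceWithRate}; the remaining false-discovery and false-negative proportion computations then apply verbatim.

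First I would use the concentration of the marginal likelihood estimator $\hat{w}$ around a deterministic target $\tilde{w}_n$ of order $s_n/n$ (up to polylogarithmic factors), a property that is already needed in the $\Cl$-value analysis and can be imported from \cite{js05,CR18}. Continuity of $w\mapsto q^{-1}(t;w)$ then yields that $\eta_q$ concentrates around the deterministic $\bar{\eta}_q$ solving $q(\bar{\eta}_q;\tilde{w}_n)=t$. Using the explicit quasi-Cauchy identity \eqref{eqn:def:gInQuasiCauchy} to extract the tail asymptotic $\bar{G}(x)=(2\pi)^{-1/2}x^{-1}(1+o(1))$ as $x\to\infty$, together with the standard Mills-ratio asymptotic $\bar{\Phi}(x)=\phi(x)x^{-1}(1+o(1))$, the defining equation $q(\bar{\eta}_q;\tilde{w}_n)=t$ reduces (after cancellation of the common factor $x^{-1}$) to
\[\frac{(1-\tilde{w}_n)\phi(\bar{\eta}_q)}{\tilde{w}_n}=\frac{t}{1-t}\,(1+o(1)),\]
from which $\bar{\eta}_q^2=2\log((1-t)(1-\tilde{w}_n)/(t\tilde{w}_n))+O(\log\log(n/s_n))$. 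This matches, up to $O(\log\log(n/s_n))$ corrections inside $\eta^2$, the deterministic threshold that governs $\vphi^{\Cl}$.

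From here the analysis mirrors the $\Cl$-value proof. Under $P_{\theta_0}$ for $\theta_0\in\ell_0(s_n;v_n)$, the strong-signal condition $|\theta_{0,i}|\geq\sqrt{2\log(n/s_n)}+v_n$ and standard Gaussian tail bounds ensure that with probability tending to one all $s_n$ signals exceed $\eta_q$ and are therefore rejected. The count of rejected nulls is a sum of indicators $\II\{|X_i|>\eta_q\}$ over $i\in S_0^c$, which after conditioning on $\hat{w}$ concentrates (by Bernstein's inequality) around $2(n-s_n)\bar{\Phi}(\eta_q)$; by construction of $\eta_q$ this deterministic mean equals $ts_n(1+o(1))/(1-t)$. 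Ratio concentration then gives $\FDP(\vphi^{\qval};\theta_0)\to t$ in probability at rate $\log\log(n/s_n)/\log(n/s_n)$, and passing to expectations (using that $\FDP$ is bounded by $1$) yields \cref{eqn:FDRTendsToT,eqn:FDRUpperAndLowerBound}. The FNR bounds \cref{eqn:FNRTendsTo0,eqn:FNRControl} are an immediate byproduct of the same control on the signal rejection count.

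The main obstacle is matching the two-sided rate $\log\log(n/s_n)/\log(n/s_n)$ in \cref{eqn:FDRUpperAndLowerBound}. Although $\eta_q$ is defined by a simpler equation than $\hat{\lambda}$, the sub-leading terms in its expansion and the error in approximating $\hat{w}$ by $\tilde{w}_n$ must both be controlled at precisely this rate; this requires carefully tracking the $o(1)$ factors in the Mills-ratio and $\bar{G}$ expansions at the scale $x\sim\sqrt{2\log(n/s_n)}$, and using the same fine control on $\hat{w}$ that drives the $\Cl$-value rate. Once these threshold estimates are in place, the remaining probabilistic steps are a direct replay of the corresponding arguments for $\vphi^{\Cl}$.
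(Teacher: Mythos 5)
Your high-level architecture mirrors the paper's: split the analysis by observing that $\vphi^{\qval}$ thresholds $|X_i|$ at $\chi(r(\hat w,t))$, use concentration of $\hat w$, invoke the strong-signal assumption to dispose of the FNR, and control the false-positive count by Bernstein. The paper follows exactly this outline in \cref{sec:proofs:qval}. However, the central step of your FDP argument contains a genuine gap.

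You claim that $\bar\eta_q^2 = 2\log\brackets{(1-t)(1-\tilde w_n)/(t\tilde w_n)}+O(\log\log(n/s_n))$ and then that ``by construction of $\eta_q$'' the mean $2(n-s_n)\bar\Phi(\eta_q)$ equals $ts_n(1+o(1))/(1-t)$. This does not follow. After exponentiating, the additive $O(\log\log(n/s_n))$ error in $\bar\eta_q^2$ becomes a multiplicative factor $(\log(n/s_n))^{O(1)}$ in $\bar\Phi(\bar\eta_q)$, which is \emph{not} $1+o(1)$; moreover ``$\hat w$ concentrates around a target of order $s_n/n$ up to polylogarithmic factors'' leaves the constant in $E_{\theta_0}V'_{\hat w}$ completely undetermined, whereas the theorem needs this constant to equal $t/(1-t)$ and the FDR rate needs its error to be $O(\eps_n)$ with $\eps_n=\log\log(n/s_n)/\log(n/s_n)$. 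The Mills-ratio asymptotics you invoke simply do not have the resolution to produce the right constant.

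The paper closes this gap with an exact algebraic identity plus one substantial new lemma. Using $q(\chi(r(w,t));w)=t$, it writes $E_{\theta_0}V'_w = 2(n-s_n)\bar\Phi(\chi(r(w,t))) = (n-s_n)r(w,t)\cdot 2\bar G(\chi(r(w,t)))$ \emph{exactly} (\cref{eqn:EV'}), so the whole burden is on controlling $2\bar G(\chi(r(w,t)))$. \Cref{lemmaBoundingqvalue} then shows $2\bar G(\chi(r(w,t))) = \tilde m(w)\brackets{1+O(\log\log(1/w)/\log(1/w))}$, and the score-balancing equations \cref{eqn:def:w-,eqn:def:w+} defining $w_\pm$ together with $w m_1(\theta_{0,i},w)=1-O(\rho_n)$ (\cref{lem:m1boundsAnyBoundary}) give $(n-s_n)w_\pm\tilde m(w_\pm) = s_n(1+O(\nu_n+\rho_n))$. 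Chaining these produces $E_{\theta_0}V'_{w_\pm} = \tfrac{t}{1-t}s_n(1+O(\eps_n))$ with the right constant and the right rate. In your argument, the role played by $\tilde m$ as the pivot linking the MLE target to $\bar G(\chi(\cdot))$ is absent, and replacing it with raw tail expansions of $\bar\Phi$ and $\bar G$ loses exactly the precision that the two-sided rate bound requires. You acknowledge the obstacle in your final paragraph but do not resolve it; resolving it essentially forces you to reconstruct \cref{lemmaBoundingqvalue} and re-use the balancing characterisation of $w_\pm$.
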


The FDR/FNR of the $q$-value procedure was studied in \cite{CR18}, but without convergence rates, and  an improvement with respect to the  \Cl-value procedure was conjectured following simulations. Theorem~\ref{thm:QvalueControlsFDR} addresses this issue by showing the convergence rate is in fact exactly the same.
%The key change relative to Theorem 3 in \cite{CR18} is, as noted, that we obtain explicit convergence rates. %As with \cref{thm:FDRConvergenceNoRate,thm:FDRConvergenceWithRate} we have also weakened the boundary condition relative to \cite{CR18}, and allowed for sparsities $s_n$ only of slightly smaller order than $n$ rather than requiring polynomial sparsity.  
%\sout{Another difference is that  we have weakened the boundary condition relative to \cite{CR18}, and allowed for sparsities $s_n$ only of slightly smaller order than $n$ rather than requiring polynomial sparsity. }\et
	Further comparisons between our results and those of \cite{CR18} will be provided in Section~\ref{sec:twom} .%

\subsection{Sketch proof of Theorems~\ref{thm:FDRConvergenceNoRate} and \ref{thm:FDRConvergenceWithRate}}\label{sec:SketchProof}

The proof relies on the concentration of $\hat{w}$ and $\hat{\lambda}$.
One shows (\cref{lem:ExistsW+-,lem:HatWConcentrates}) 
that $\hat{w}$ concentrates near a (deterministic) value $w^*$, of order slightly larger than $s_n/n$, that roughly maximizes  the expectation of the log-likelihood  \eqref{eqn:def:logLikelihood}.
Recalling that $S_0=\braces{i : \theta_{0,i}\neq 0}$ denotes the support of $\theta_0$, the signal strength assumption %\cref{eqn:def:StrongSignalClass} 
ensures that for $i\in S_0$, with high probability $\ell_{i,w^*}(X)\approx 0$. Hence, using that $\hat{\lambda}\geq t>0$,  we obtain 
\[
\sum_{i\in S_0} \vphi^\Cl_i \approx s_n; \mbox{ and more precisely } \sum_{i\in S_0} (1-\vphi^\Cl_i) = o( s_n),
\]  
see Lemma~\ref{lem:lvalsSmallForTrueSignals}. This implies that the FNR of $\vphi^{\Cl}$ tends to 0. For the FDR result, let $V_{\lambda,w}$, $\lambda,w\in [0,1]$ denote the number of false discoveries made by $\vphi_{\lambda,w}$; that is, \begin{equation}\label{eqn:def:VLambdaW}
	V_{\lambda,w}=\sum_{i\not \in S_0} \II\braces{\ell_{i,w}(X)<\lambda}.
\end{equation} 
One shows (\cref{lem:HatLambdaConcentrates,lem:ExistsLambda+-}) that with high probability, $\hat{\lambda}$ is close to the solution $\lambda^*$ to 
\[ E[V_{\lambda^*,w^*}] (E_{\theta_0=0}[\ell_{1,w^*}(X) \mid \ell_{1,w^*}(X)<\lambda^*]-t) = t s_n.\]
This is because $V_{\lambda^*,w^*}$ and $\sum_{i\not \in S_0} \ell_{i,w^*}(X)\II\braces{\ell_{i,w^*}(X) < \lambda^*}$ concentrate around their means (\cref{lem:VLambdaWConcentrates} and the proof of \cref{lem:HatLambdaConcentrates}) ensuring that with high probability (recall $\ell_{i,w^*}(X)\approx 0$ for $i\in S_0$)
\begin{align*}
 \postFDR_{w^*} (\vphi_{\lambda^*,w^*}) &\approx \frac{\sum_{i\not \in S_0} \ell_{i,w^*}(X)\II\braces{\ell_{i,w^*}(X)<\lambda^*}}{s_n+\sum_{i\not\in S_0}\II\braces{\ell_{i,w^*}(X)<\lambda^*}} \\
 &\approx \frac{  \sum_{i\notin S_0} E[ \ell_{i,w^*}(X) \II\braces{ \ell_{i,w^*}(X)<\lambda^* }]}{E[V_{\lambda^*,w^*}]+s_n} \\
 &\quad =%\approx 
 \frac{ E[V_{\lambda^*,w^*}] E_{\theta_0=0}[\ell_{1,w^*}(X)\mid \ell_{1,w^*}(X)<\lambda^*]}{E[V_{\lambda^*,w^*}]+s_n}= t.
 \end{align*}
Then, again using concentration of $V_{\lambda^*,w^*}$,
\begin{align*}
	\FDR(\vphi^{\Cl};\theta_0)\approx& \frac{E[V_{\lambda^*,w^*}]}{s_n+E[V_{\lambda^*,w^*}]} \\ &= \frac{ts_n/(E_{\theta_0=0}[\ell_{1,w^*}(X)\mid \ell_{1,w^*}(X)<\lambda^*]-t)}{s_n+ts_n/(E_{\theta_0=0}[\ell_{1,w^*}(X) \mid \ell_{1,w^*}(X)<\lambda^*]-t) } \\ & = t/{E_{\theta_0=0}[\ell_{1,w^*}(X) \mid \ell_{1,w^*}(X)<\lambda^*]} \\ 
	& \approx t\brackets[\big]{1 + \brackets{1- E_{\theta_0=0}[\ell_{1,w^*}(X) \mid \ell_{1,w^*}(X) <\lambda^*]}},
\end{align*}
with the last approximation following from a Taylor expansion. Finally, one notes (\cref{lem:ExistsLambda+-}) that $E_{\theta_0=0}[\ell_{1,w^*}(X)\mid \ell_{1,w^*}(X)<\lambda^*]$ converges to 1 (from below) at a rate $\eps_n=\log \log (n/s_n)/\log (n/s_n)$. The errors arising each time $\approx$ is invoked above depend on the sparsity $s_n$ and on the boundary separation sequence $v_n$, and are shown in the setting of \cref{thm:FDRConvergenceWithRate} to be of smaller order than $\eps_n$, so that this concludes the (sketch) proof. 
\section{Discussion}

%\subsubsection{Main contributions} \label{sec-maincont}

%While \cite{CR18} studied $\ell$- and $q$-value procedures (details of the results therein given below), the commonly used \Cl-value procedure was left aside. Its theoretical study is more involved, because it is ``doubly empirical'', with random choices of both $\hat{w}$ and $\hat{\lambda}$. 
%Our main results here can be informally described as follows:
%\begin{itemize}
%	\item the $\Cl$-value procedure is --- to the best of our knowledge --- analysed for the first time in the sparse frequentist setting. By extending ideas from \cite{CR18}, we prove that the $\Cl$-value procedure controls the FDR at a user-predefined target level and has a vanishing FNR, for suitably large non-zero signals.
%	\item the  convergence rates of the FDR and FNR {of the $\Cl$-value procedure} to the target level and to $0$ respectively are studied. One obtains logarithmic rates, confirming a conjecture formulated in \cite{CR18}, Section S-8 (in the supplementary material \cite{CR18_supp}).
%	\item the convergence rates of the FDR and FNR are also studied for the $q$-value procedure, thereby complementing Theorems 3 and 4 of \cite{CR18}. 
%\end{itemize} 
%In the remainder of this section we place these results into context, by comparing with existing results in the frequentist-Bayes literature and in latent variable settings.

\subsection{Optimality}\label{sec:optimality}

{\em Sharpness of the boundary condition.}
A related work by the current authors \cite{ACR21lb} proves the following bound for fixed $b\in\RR$: any procedure $\vphi$ which is `sparsity preserving' (meaning that its number of discoveries at any $\theta_0\in\ell_0(s_n;b)$ is with high probability not too much larger than $s_n$ --- see \cref{sec:sparsity-preserving} for precise definitions) cannot simultaneously satisfy the FDR and FNR bounds  
\[ \limsup_n \sup_{\theta_0\in \ell_0(s_n;b)} \FDR(\vphi;\theta_0) \leq t,\quad \limsup_n \sup_{\theta_0\in\ell_0(s_n;b)} \FNR(\vphi;\theta_0)=0.\]
Indeed, such a procedure cannot satisfy the FNR bound alone.
In \cref{sec:sparsity-preserving} we show that both the $\Cl$- and $q$-value procedures are sparsity preserving. The condition $v_n\to \infty$ required for \cref{thm:FDRConvergenceNoRate} thus cannot be relaxed while still achieving the FNR bound \cref{eqn:FNRTendsTo0}, and correspondingly for \cref{thm:QvalueControlsFDR}; this is not a limitation specifically of the procedures considered here but is true of any ``reasonable'' (i.e.\ sparsity preserving) procedure, including for example Benjamini--Hochberg type procedures (see \cite{ACR21lb}).\\
%If $v_n$ is upper bounded by a constant, then for $t$ small enough (depending on the upper bound) and for any sparsity preserving multiple testing procedure $\psi$, one cannot simultaneously achieve $\limsup_n \sup_{\theta_0\in \ell(s_n;v_n)} \FDR(\psi;\theta_0)\leq t$ and $\limsup_n \sup_{\theta_0\in \ell(s_n;v_n)}\FNR(\psi;\theta_0)= 0$. This is proved, along with other properties concerning the sharp boundary, in \cite{ACR21lb}.
		%Consequently the boundary obtained in \cref{thm:FDRConvergenceNoRate} cannot be improved without weakening the conclusion, and similarly for the $q$-value procedure.

{\em General signal regimes.} 
As noted above, % \cref{thm:FDRConvergenceNoRate}, 
with weaker signals ($v_n\not\to \infty$) it is impossible to achieve both small FDR and vanishing FNR. One could still hope, under such weaker signals, to have an ``honest'' procedure, in the sense that its FDR is controlled at close to the target level $t$. Simulations in \cite{CR18} suggest that this is indeed the case for the $\Cl$-value procedure. When the $\ell$-value procedure makes no discoveries (i.e.\ every $\ell_{i,\hat{w}}(X)$ is larger than the target level $t$) the $\Cl$-value procedure also makes no discoveries, so that the proofs in \cite{CR18} controlling the FDR of the $\ell$-value procedure for very weak signals also apply to the $\Cl$-value procedure. It remains to study ``intermediate'' signals, strong enough that the $\Cl$-value procedure makes some discoveries but weaker than the class $\ell_0(s_n,v_n)$ analysed here.  While the general case seems challenging, the intermediate signal regime where $v_n$ is fixed ($v_n\not \to -\infty$) can be treated using results from \cite{ACR21lb}: see the remark in \cref{sec:sparsity-preserving}. %Note that the weak lower bound $\hat{\lambda}\geq t$ suffices for proving honesty, removing the need for a sharper lower bound $\lambda_-$.
%If we could show (equation 61 in \cite{CR18} has no solution) is implied by ($\Lambda_\infty=1$) then we'd be done. But I think there's a gap.

\subsection{Relationship between the $\Cl$-, $\ell$- and $q$-value procedures}

The key contribution of this paper is to analyse the $\Cl$-value procedure. This procedure, like the $q$- and $\ell$- value procedures, is in wide use in multiple testing and does not need our advocacy, but let us nevertheless highlight some advantages.

%{\em $q$-value and \Cl-value: some differences.} 
Note that, as originally introduced in \cite{Storey2003}, $q(x;w)$ corresponds to $P_{(\theta,X)}(\theta_i=0 \:|\: |X_i|\geq x)$. 
Hence, the $q$-value $q_{i,w}(X)$ corresponds to the conditional probability $q_{i,w}(X(\omega))= P_{\theta\sim \Pi_w}(\theta_i=0 \:|\: |X_i|\geq |X_i(\omega)|)$. %corresponds to $q_i(X)=P'_{\theta'\sim \Pi_w'}(\theta_i'=0 \mid \abs{X_i'}\geq \abs{X_i}),$ where $\theta',X'$ are independent copies of $\theta,X$, and $P'$ denotes that we take expectation with respect to these primed variables only.
Nevertheless, it is not based solely on the posterior $\Pi_w(\cdot \mid X)$ but rather on the joint distribution of $(\theta,X)$: in the conditioning, the event $\abs{X_i}\geq \abs{X_i(\omega)}$ involves measures $X_i$ more extreme than the observed one $X_i(\omega)$.   By contrast, the \Cl-value procedure depends only on the observed event and not on other events that one hypothetically could have observed. 
From a philosophical point of view, it follows that while both procedures adhere to multiple testing principles, the \Cl-value procedure more closely aligns with Bayesian principles. This potentially also has positive implications for computation, since the $\Cl$-value procedure can be calculated directly from $\ell$-values, while computation of $q$-values must be done separately, requiring an extra integration step, and can be more involved for more complicated priors/models. See \cite{RRV2019} for an example of $q$-value computations for Gaussian mixtures.

When the data is truly generated from the prior, %Taking a Bayesian perspective, 
$\ell$-value procedures, though optimal for classification problems, are less adapted to the (B)FDR scale than $q$- and $\Cl$- value procedures. Indeed, when the prior $\Pi_w$ correctly specifies the data distribution for some known $w$, these latter procedures achieve $\BFDR$ control at close to the target level, while the $\ell$-value procedure typically has noticeably smaller BFDR (recall also the discussion at the end of \cref{sec:Bayesian-multiple-testing-procedures}, and Proposition 1 in \cite{CR18}).
Similarly, from the frequentist point of view, the results herein and in \cite{CR18} show that in strong signal settings with a (non-random) sparse parameter $\theta_0$, the $\Cl$- and $q$- value procedures make full use of their ``budget'' of false discoveries in order to make more true discoveries, % (their FDR is close to the specified target level), 
while the $\ell$-value procedure undershoots the user-specified target FDR level and so is conservative.

Another approach to adjusting the $\ell$-value procedure to the FDR scale would be to use a \emph{deterministic} threshold $\lambda^*=\lambda^*_n(t)\to 1$ to obtain the specified FDR level $t$ asymptotically. In view of \cref{lem:ExistsLambda+-}, the appropriate choice would have $1-\lambda^*$ of order $(\log (n/s_n))^{-1}$, which depends on the unknown sparsity $s_n$. The $\Cl$-value procedure can be seen as one way to make an appropriate choice adaptively to %an unknown sparsity 
$s_n$.% to reach the specified target level $t$ asymptotically.

%While the $\Cl$-value procedure shares multiple testing optimality properties with the $q$-value procedure, from a computational point of view it has more in common with the $\ell$-value procedure. Indeed, given the $\ell$-values, it is trivial to compute the $\Cl$-value procedure, while it may remain difficult to compute the $q$-values, requiring an extra integration step. See \cite{RRV2019} for an example of $q$-value computations for Gaussian mixtures. \\

\subsection{Relationship to frequentist-Bayes analysis} \label{sec:twom}

{\em Frequentist analysis of $\Pi_{\hat w}[\cdot\given X]$.} Recently, a number of works have analysed different aspects of inference for the EB-posterior distribution, mostly from the {\em estimation}  perspective. The paper \cite{JS04} pioneered this study by establishing that the posterior median and mean converge at minimax rates over sparse classes for the quadratic risk. The posterior distribution itself was studied in \cite{cm18} and results on frequentist coverage of Bayesian credible sets were obtained in \cite{cs20}.  This connects to the analysis of Bayesian methods in  high-dimensional settings, where a variety of prior distributions (e.g. different types of  spike-and-slab priors, continuous shrinkage priors including the horseshoe or mixture of Gaussians) and methods (e.g. empirical Bayes, fully Bayes, variational Bayes) have been considered. We refer to \cite{bcg20} for a review on the rapidly growing literature on the subject.\\

{\em Frequentist analysis of Bayesian multiple testing procedures.}

To our knowledge, the only references studying theoretically the frequentist FDR/FNR of Bayesian multiple testing procedures are \cite{CR18} for the present spike-and-slab prior; \cite{Salomond17}, which considers continuous shrinkage priors and derived a first frequentist FDR bound; and \cite{belitser21}, which derived some robust results for model selection based procedures, including for various notions of sums FDR+FNR. 
Let us summarise what was proved in \cite{CR18} and compare with \cref{thm:FDRConvergenceNoRate,thm:FDRConvergenceWithRate,thm:QvalueControlsFDR}: %Given the optimality properties of the empirical Bayes posterior distribution for estimation and confidence sets yielded by the above results, it is natural to ask whether efficient multiple testing procedures can be constructed from it. In \cite{CR18}, this question was considered in the present setting (under an additional `polynomial sparsity' condition) and the following was obtained: 
\begin{itemize}
	\item the $\ell$-value procedure controls the FDR, uniformly over all sparse alternatives. Its FDR converges to $0$. A logarithmic upper bound is proved for the rate of convergence, but there is no matching lower bound showing that this rate is not bettered. 
	For alternatives with large enough ``signal strength'', the $\ell$-value procedure has a vanishing FNR.
	\item the $q$-value procedure controls the FDR close to the target level, uniformly over all sparse alternatives. For alternatives with large enough signal strength, the $q$-value procedure has FDR converging to the target level, and a vanishing FNR.
\end{itemize}
The commonly used \Cl-value procedure, considered here, was left aside. Its theoretical study is more involved, because it is ``doubly empirical'', with random choices of both $\hat{w}$ and $\hat{\lambda}$.

%
%\sout{A numerical study in {the same reference} confirmed the excellent behaviour of the $\ell$-, $q$- and \Cl-value procedures in practice, with some differences {between the procedures appearing}, as expected from the above theoretical results:  in particular, the $\ell$-value procedure is slightly too conservative and has FDR tending to $0$ for any level $t\in(0,1)$. Intuitively, this means that the $\ell$-value procedure does not exactly follow the FDR-scale. The $q$-value procedure was shown on the other hand to scale ``correctly'' in terms of FDR by having its FDR going to the target level $t$. The simulations suggested, and results herein confirm, that the \Cl-value procedure is able to adjust an $\ell$-value thresholding procedure to follow the FDR-scale by choosing a threshold $\hat{\lambda}\geq t$ which will in fact be shown to converge to 1.}

%The key novelty required in the proof of \cref{thm:FDRConvergenceNoRate} relative to the proofs in \cite{CR18} for the $\ell$-value and $q$-value procedures is that, as a result of the ``doubly empirical'' nature of the $\Cl$-value procedure, we must not only control the size and impact of fluctuations of $\hat{w}$ about some central value $w^*$, but also of $\hat{\lambda}$ around some $\lambda^*$.

Another key novelty, in addition to considering the \Cl-value procedure, is the weakening of conditions on $v_n$ and on $s_n$. In \cite{CR18} it is assumed that there exists some $\nu<1$ for which $s_n\leq n^\nu$, but we are able to prove \cref{thm:FDRConvergenceNoRate} without this `polynomial sparsity' condition. 
The boundary assumption of \cite{CR18} is equivalent to granting that $v_n\geq b(\log (n/s_n))^{1/2}$ for $b>0$, whereas here we assume only that $v_n\to \infty$. This new condition is sharp: see the discussion in \cref{sec:optimality}.

\subsection{Relationship to latent variables settings} \label{sec:latent-variables}

A model often considered in the literature on multiple testing is the following:
\begin{align} 
	\te=(\te_1,\ldots,\te_n) &\:\: \sim Q\,;\label{mod:latent}\\
	X_i \given \te_i & \stackrel{\mbox{{\tiny indep.}}}{\sim} g_{\te_i} \,,\label{mod:latent2}
\end{align} 
where the $\te_i$'s are random latent states, say taking values in $\{0,1\}$, $Q$ is a probability distribution on such states, and $g_{\te_i}$ is the density of the data point $X_i$ given one is in the state $\te_i$. When the $\te_i$'s are independent, one recovers the so-called two-group model \cite{ETST2001}. Another setting of interest is the case where $Q$ follows a Markov chain, in which case the model  \eqref{mod:latent}--\eqref{mod:latent2} is a Hidden Markov Model (HMM). The work \cite{SC2009} derived results for the \Cl-value multiple testing procedure in the case of parametric assumptions on the emission densities of the HMM, while the nonparametric setting for emission densities has recently been considered in \cite{AGC20}. Other examples include two-sample multiple testing \cite{SC2019} and graph data with underlying stochastic block-model structures \cite{RRV2019}. Such  latent variable approaches can be interpreted as Bayesian methods if we consider the layer \eqref{mod:latent} as a prior distribution. The FDR control provided in {those}  works is {thus} a BFDR control {in the terminology used in the current paper:} that is, an FDR control integrated over the prior, as in \eqref{eqn:def:BFDR}. Said differently, the prior distribution is considered to be ``true'', and the main challenge of these studies is to deal with the estimation of the (hyper-)parameters $Q$ and e.g.\ $g_0,g_1$. 

By contrast, in the sparse setting considered here, we are able to control the FDR without assuming the latent structure \eqref{mod:latent}--\eqref{mod:latent2} is genuinely true.
Results in the two settings are complementary, since uniform guarantees demonstrate the ``robustness'' of the Bayesian approach. However, in the current setting it is essential to choose an \emph{uninformative} prior, hence the heavy (Cauchy) tails of the slab distribution, while in the latent variable setting one must use a correctly specified ``prior'' to obtain optimal results.
Relatedly, sparsity is critical for the current approach  so that the influence of the (fixed, and arbitrary apart from the strong signal assumption) alternatives is not too great. In contrast, 
in a latent variables setting one typically has dense signal, and density is moreover helpful in such a setting since it allows accurate estimation of the distribution of the data under the alternative. As noted above, % after \cref{thm:FDRConvergenceNoRate},
 one success of the current work is to remove the need for polynomial sparsity: this eliminates a gap between the two approaches, allowing our current theorems to work right up to border cases of near density.

\subsection{Possible future research avenues}

This work leaves open several interesting issues. First, extending our results to hierarchical Bayes is both interesting and challenging. Indeed, it has been shown for instance that, for quadratic risk and in sparsity settings, empirical Bayes and hierarchical Bayes posteriors can have different behaviours, even for common choices of spike-and-slab priors (see \cite{cm18}).  One possible route for proving versions of our results for hierarchical priors would be to show that the posterior weight for the hyperparameter $w$ is well enough concentrated. 

Second, we suspect that the C$\ell$-value procedure satisfies some more optimality properties, e.g., having a vanishing FDR+FNR risk tending to zero at the optimal rate provided in \cite{RRJW2020}, if the level $t=t_n$ is chosen to vanish at a suitable rate.

Finally, a probably very challenging issue would be to provide non-asymptotic frequentist FDR controlling results for the C$\ell$-value procedure, and more generally for Bayesian multiple testing procedures. 

\section{Proofs of the main results} \label{sec:proofs}
Throughout the proofs we use the following notation: for a real sequence $(a_n)_{n\in\NN}$ and a non-negative sequence $(b_n)_{n\in \NN}$, we write
$a_n\lesssim b_n$, $b_n\gtrsim a_n$ or $a_n=O(b_n)$ if there exists a constant $C$ such that $\abs{a_n}\leq Cb_n$ for all $n$ large enough; we write $a_n\asymp b_n$ if $a_n\lesssim b_n$ and $b_n\lesssim a_n$; we write $a_n \ll b_n$ or $a_n=o(b_n)$ if $a_n/b_n\to 0$ as $n\to \infty$; and we write $a_n\sim b_n$ if $a_n/b_n\to 1$ as $n\to \infty$. We may also write, for example, $f(w)\sim g(w)$ as $w\to 0$ if $(f/g)(w)\to 1$, and correspondingly.

\subsection{Preliminaries}

To make the sketch argument of \cref{sec:SketchProof} rigorous, we define precise upper and lower bounds $w_\pm$ and $\lambda_\pm$ in place of the central quantities $w^*$, $\lambda^*$. 
There are four parameters governing convergence rates throughout the proof. For a constant $\alpha>0$ to be chosen, (in \cref{lem:HatWConcentrates,lem:lvalsSmallForTrueSignals}),
we write
\begin{align}
\label{eqn:def:nu}		\nu_n &= \alpha s_n^{-1/2}(\log s_n)^{1/2}, \\
\label{eqn:def:delta}	\delta_n&= (\log (n/s_n))^{-1}, \\	
\label{eqn:def:eps}	\eps_n &= \delta_n \log \log (n/s_n),\\
\label{eqn:def:rho} \rho_n &= e^{-v_n^2/9}.
\end{align}
[Recall that the `strong signal assumption' of \cref{thm:FDRConvergenceNoRate} is that $\theta_0\in\ell_0(s_n,v_n)$.]
Note that
\begin{equation}\label{eqn:delta<eps} \delta_n=o(\eps_n).
	\end{equation}
In the setting of \cref{thm:FDRConvergenceWithRate} we further have
	\begin{align} \label{eqn:nu<delta} \nu_n&=o(\delta_n), \\
		\label{eqn:p<delta} \rho_n&\leq \delta_n,
	\end{align}
 the former following from the fact that $u\mapsto (u/\log u)^{-1/2}$ is decreasing on $u>e$ and the assumption that $s_n\geq (\log n)^3$, and the latter from the assumption that $v_n\geq 3 (\log \log (n/s_n))^{1/2}$. 

Define \begin{equation}\label{eqn:def:beta}\beta(x):=\tfrac{g}{\phi}(x)-1,\end{equation} 
and observe we may write the score $S(w)$ as
\begin{equation}\label{eqn:def:Score}
	S(w): = L'(w)=\sum_{i=1}^n \frac{\beta(X_i)}{1+w\beta(X_i)}.%, \quad \beta(x,w)=\frac{\beta(x)}{1+w\beta(x)}.
\end{equation} 
Then defining $\tilde{m},m_1$ as in \cite{CR18} by
%Recalling the definition \cref{eqn:def:logLikelihood} of $\beta$ and defining $\tilde{m},m_1$ as in \cite{CR18} by 
\begin{align}\label{eqn:def:tildem} \tilde{m}(w)=  -E_{\theta_0=0} \sqbrackets[\Big]{\frac{\beta(X_1)}{1+w\beta(X_1)}} \\ \label{eqn:def:m_1}
	m_1(\tau,w)=E_{\theta_{0,1}=\tau} \sqbrackets[\Big]{ \frac{\beta(X_1)}{1+w\beta(X_1)}},
\end{align}
 we let $w_{\pm}$ be the (almost surely unique) solutions to 
\begin{align}\label{eqn:def:w-} \sum_{i\in S_0} m_1(\theta_{0,i},w_-)=(1+\nu_n)(n-s_n)\tilde{m}(w_-), \\
	\label{eqn:def:w+} \sum_{i\in S_0} m_1(\theta_{0,i},w_+)=(1-\nu_n)(n-s_n)\tilde{m}(w_+).
\end{align}
Note that equations solved by $w_+,w_-$ are close to the expected score equation $E_{\theta_0}[S(w)]=0$. 
%%\begin{equation}\label{eqn:def:Score}
% S(w)= L'(w)=\sum_{i=1}^n \beta(X_i,w), \quad \beta(x,w)=\frac{\beta(x)}{1+w\beta(x)}.
%\end{equation} 
While it is shown in \cite{CR18} that solutions exist for $\nu_n=\nu$ a fixed positive constant, strengthening this conclusion to allow $\nu_n\to 0$ is required here to obtain rates of convergence; we note that there indeed exist solutions $w_-\leq  w_+$ to \cref{eqn:def:w+,eqn:def:w-} for $n$ large enough, for any $\alpha>0$, by \cref{lem:ExistsW+-}.

Let \begin{equation}\label{eqn:def:Fw}F_w(x)=P_{\theta_0=0}(\ell_{1,w}\leq x),\end{equation} and for some $A=A(t)>0$ to be chosen (in \cref{lem:HatLambdaConcentrates}), define $\lambda_\pm$ as the solutions to
\begin{align}
	\label{eqn:def:lambda+}
	(n-s_n)F_{w_-}(\lambda_+) \brackets[\big]{E_{\theta_0=0}[\ell_{1,w_+}\mid \ell_{1,w_-}<\lambda_+]-t}&=ts_n + A  s_n \nu_n \\
	\label{eqn:def:lambda-}
	(n-s_n)F_{w_+}(\lambda_-) \brackets[\big]{E_{\theta_0=0}[\ell_{1,w_-} \mid \ell_{1,w_+}<\lambda_-]-t}&=ts_n - As_n\max(\nu_n,\rho_n,\delta_n).
\end{align}
Note that unique solutions $\lambda_-<\lambda_+$ to \cref{eqn:def:lambda+,eqn:def:lambda-} exist by \cref{lem:ExistsLambda+-}. These definitions correspond to the central approximation $(n-s_n)F_{w^*}(\lambda^*)(E_{\theta_0=0}[\ell_{1,w^*} \mid \ell_{1,w^*}<\lambda^*]-t)=ts_n$, but with each central quantity $w^*$ replaced by an upper or lower bound in a consistent way to ensure the left side is always decreased in \cref{eqn:def:lambda+} and always increased in \cref{eqn:def:lambda-}, so that the solutions $\lambda_+$ and $\lambda_-$ will indeed bound $\hat{\lambda}$ from above and below respectively (\cref{lem:HatLambdaConcentrates}).

\subsection{Proof of Theorems~\ref{thm:FDRConvergenceNoRate} and \ref{thm:FDRConvergenceWithRate}} \label{sec-proof-main}
\Cref{sec-core} will provide a number of core lemmas which allow a concise exposition of the proofs of \cref{thm:FDRConvergenceWithRate,thm:FDRConvergenceNoRate}.
In particular, \cref{lem:HatWConcentrates,lem:lvalsSmallForTrueSignals,lem:HatLambdaConcentrates,lem:VLambdaWConcentrates} collectively tell us, via a union bound, that there exists an event $\Aa$ of probability at least $1-\nu_n$ on which, for some $a=a(t)>0$ and with 	$K_n:=\#\braces{i\in S_0 : \ell_{i,w_-}>\delta_n}$,
%\begin{proof}[Proof of Theorems~\ref{thm:FDRConvergenceNoRate} and \ref{thm:FDRConvergenceWithRate}]
%	\Cref{lem:HatWConcentrates,lem:lvalsSmallForTrueSignals,lem:HatLambdaConcentrates,lem:VLambdaWConcentrates}, to follow, will form the bulk of the proof: collectively, via a union bound, they tell us that there exists an event $\Aa$ of probability at least $1-\nu_n$ on which, for some $a=a(t)>0$ and with 	$K_n:=\#\braces{i\in S_0 : \ell_{i,w_-}>\delta_n}$,
	\begin{equation} \label{eqn:eventAa} \begin{split} \hat{w}&\in(w_-,w_+),\\ 
			K_n&\leq s_n (\rho_n+\nu_n), \\ 
			\hat{\lambda}&\in [\lambda_-,\lambda_+],\\
			V_{\lambda_+,w_+}&\leq E[V_{\lambda_+,w_+}]+as_n\nu_n, \\
			V_{\lambda_-,w_-}&\geq E[V_{\lambda_-,w_-}]-a s_n\nu_n.\end{split}
	\end{equation}

	{\em FNR control.} 	By monotonicity of the $\ell$-values (\cref{lem:monotonicity}) and the fact that $\lambda_-$ is bounded away from zero (as implied by \cref{lem:ExistsLambda+-}) we note that for $n$ large we have on $\Aa$ 
	\begin{equation} \label{eqn:KnBoundsMissedSignals}\#\braces{i\in S_0 : \vphi^{\Cl}_i=0} \leq \#\braces{i \in S_0 : \ell_{i,w_-}\geq \lambda_-}  \leq K_n,\end{equation} 
	which in particular allows us to immediately deduce the FNR control \cref{eqn:FNRTendsTo0}:
	\begin{equation}\label{eqn:FNRproof}\FNR(\vphi^{\Cl};\theta_0)\leq E_{\theta_0}\brackets[\Big]{\frac{K_n}{s_n}\II_{\Aa}+\II_{\Aa^c}}\leq \rho_n+\nu_n + P_{\theta_0}(\Aa^c)\leq \rho_n+2\nu_n\to 0. \end{equation}
	In the setting of \cref{thm:FDRConvergenceWithRate}, the fact that $\max(\rho_n,\nu_n)\leq \delta_n$ (recall \cref{eqn:nu<delta,eqn:p<delta}) implies the FNR claim \cref{eqn:FNRControl}.
	
{\em FDR upper bound.}
	We turn now to the control of the false discovery rate. By monotonicity (see \cref{lem:monotonicity}), on the event $\Aa$, the number $V_{\hat{\lambda},\hat{w}}$ of false discoveries made by $\vphi^\Cl$ lies between $V_{\lambda_-,w_-}$ and $V_{\lambda_+,w_+}$.
	By \cref{lem:ExistsLambda+-} we see for a constant $D=D(t)>0$ that \[E[V_{\lambda_+,w_-}]=(n-s_n)F_{w_-}(\lambda_+)\leq (1+D\max(\eps_n,\nu_n))t(1-t)^{-1}s_n.\] \Cref{lem:EVw+closetoEVw-}  tells us that $E[V_{\lambda_+,w_+}]\leq (1+B\max(\nu_n,\rho_n,\delta_n))E[V_{\lambda_+,w_-}]$ for some constant $B$, hence for some constant $D'>D$ we deduce using $\delta_n=o(\eps_n)$ that for $n$ large enough we have
	\[E[V_{\lambda_+,w_+}]\leq (1+D' \max(\eps_n,\nu_n,\rho_n))t(1-t)^{-1}s_n.\]
		Since for $a,b>0$, the map $x\mapsto x/(a+x)$ is increasing and the map $b/(a+x)$ is decreasing on $x> -a$, using also  \cref{eqn:eventAa,eqn:KnBoundsMissedSignals}  we deduce that on the event $\Aa$
%		\begin{align*}
%			\FDP(\vphi^{\Cl}%_{\hat{\lambda},\hat{w}}
%			;\theta_0) & \leq \frac{V_{\hat{\lambda},\hat{w}}}{V_{\hat{\lambda},\hat{w}}+s_n-K_n}\II_\Aa +\II_{\Aa^c} \\
%			&\leq \frac{ E[V_{\lambda_+,w_+}]+a s_n\nu_n}{E[V_{\lambda_+,w_+}]+a s_n\nu_n +s_n-K_n}+\II_{\Aa^c} \\ &\leq \frac{(1+D'\max(\eps_n,\nu_n,\rho_n))t(1-t)^{-1}s_n+a s_n\nu_n
%			}{(1+D'\max(\eps_n,\nu_n,\rho_n))t(1-t)^{-1}s_n+s_n-s_n(\rho_n+(1-a)\nu_n)} + \II_{\Aa^c} 
%			\\ & \leq \frac{t+D't\eps_n+a'\max(\nu_n,\rho_n) }{1+D't\eps_n -a'\max(\nu_n,\rho_n)} +\II_{\Aa^c},
%		\end{align*}
	\begin{align*}
		\FDP(\vphi^{\Cl}%_{\hat{\lambda},\hat{w}}
		;\theta_0) & \leq \frac{V_{\hat{\lambda},\hat{w}}}{V_{\hat{\lambda},\hat{w}}+s_n-K_n}\\
		&\leq \frac{ E[V_{\lambda_+,w_+}]+a s_n\nu_n}{E[V_{\lambda_+,w_+}]+a s_n\nu_n +s_n-K_n} \\ &\leq \frac{(1+D'\max(\eps_n,\nu_n,\rho_n))t(1-t)^{-1}s_n+a s_n\nu_n
		}{s_n[(1+D'\max(\eps_n,\nu_n,\rho_n))t(1-t)^{-1}+1-(\rho_n+(1-a)\nu_n)]}
%	&\leq \frac{(1+D'\max(\eps_n,\nu_n,\rho_n))t(1-t)^{-1}s_n+a s_n\nu_n	}{(1+D'\max(\eps_n,\nu_n,\rho_n))t(1-t)^{-1}s_n+s_n-s_n(\rho_n+(1-a)\nu_n)}
		\\ & \leq \frac{t+D't\eps_n+a'\max(\nu_n,\rho_n) }{1+D't\eps_n -a'\max(\nu_n,\rho_n)},
		\end{align*}
		for some $a'=a'(t)>0$, hence we have \[\FDP(\vphi^{\Cl};\theta_0)\leq \frac{t+D't\eps_n+a'\max(\nu_n,\rho_n) }{1+D't\eps_n -a'\max(\nu_n,\rho_n)} + \II_{\Aa^c}.\]
Taking expectations, using that $P_{\theta_0}(\Aa^c)\leq \nu_n$ and that
\[ \frac{t+D't\eps_n}{1+D't\eps_n} = t +  \frac{D't(1-t)\eps_n} {1+D't\eps_n}\leq t+D't(1-t)\eps_n,\]
by Taylor expanding we see that for some constant $A'=A'(t)$, for $n$ large we have
\[ \FDR(\vphi^{\Cl}%_{\hat{\lambda},\hat{w}}
;\theta_0)\leq t+ t(1-t) D'\eps_n +A'\max(\nu_n,\rho_n).\]
The right side converges to $t$ in the settings of \cref{thm:FDRConvergenceNoRate,thm:FDRConvergenceWithRate}. In the latter setting we note $\max(\nu_n,\rho_n)=o(\eps_n)$ by \cref{eqn:delta<eps,eqn:nu<delta,eqn:p<delta}, and the upper bound in \cref{eqn:FDRUpperAndLowerBound} follows.

{\em FDR lower bound.}	
	For the lower bound, note by \cref{lem:ExistsLambda+-} that for a constant $d>0$ we have \[E[V_{\lambda_-,w_+}]\geq\frac{t}{1-t}s_n \brackets[\big]{ 1+d \eps_n - \tfrac{A}{t}\max(\nu_n,\rho_n)},\] for $n$ large. Thus, by \cref{lem:EVw+closetoEVw-} and for $B$ the constant thereof, using that $\delta_n=o(\eps_n)$ we see that for some constants $A',d'$ depending on $t$ and for $n$ larger than some $N=N(t)$ we have 
	\[\begin{split} E[V_{\lambda_-,w_-}] &\geq (1-B\max(\nu_n,\rho_n,\delta_n))(1+d\eps_n- \tfrac{A}{t}\max(\nu_n,\rho_n))t(1-t)^{-1}s_n\\ &\geq (1+d'\eps_n- A'\max(\nu_n,\rho_n))t(1-t)^{-1}s_n,\end{split}\]
	hence, using \cref{eqn:eventAa} and upper bounding the number of true discoveries by $s_n$,
	\begin{align*}
		\FDP(\vphi^{\Cl}%_{\hat{\lambda},\hat{w}}
		;\theta_0) & \geq \frac{ E[V_{\lambda_-,w_-}]- a s_n\nu_n}{s_n+E[V_{\lambda_-,w_-}]- a s_n \nu_n}\II_{\Aa} \\
		&\geq \frac{(1+d'\eps_n-A'\max(\nu_n,\rho_n))t(1-t)^{-1}s_n- a s_n \nu_n}
		{s_n+(1+d'\eps_n-A'\max(\nu_n,\rho_n))t(1-t)^{-1}s_n-a s_n\nu_n}-\II_{\Aa^c} \\ 
		&\geq \frac{t+d't\eps_n-a'\max(\nu_n,\rho_n)}{1+d't\eps_n-a'\max(\nu_n,\rho_n)} -\II_{\Aa^c},
	\end{align*}
for $a'=A't+a(1-t)$.
Similarly to the upper bound we note that for large $n$
\[\frac{t+d't\eps_n}{1+d't\eps_n}=t+\frac{d't(1-t)\eps_n}{1+d't\eps_n}\geq t+0.5t(1-t)d'\eps_n,\] so that Taylor expanding and taking expectations, recalling that $P_{\theta_0}(\Aa^c)\leq \nu_n$, we obtain for some $A''=A''(t)$	\[ \FDR(\vphi^{\Cl}%_{\hat{\lambda},\hat{w}}
;\theta_0)\geq t + 0.5t(1-t) d'\eps_n - A''\max(\nu_n,\rho_n).\]
Again the right side tends to $t$ in the settings of both \cref{thm:FDRConvergenceNoRate,thm:FDRConvergenceWithRate}. In the latter setting, for all $n$ greater than some $N=N(t)$, we have $0.5t(1-t)d'\eps_n>2A''\max(\nu_n,\rho_n)$, and the lower bound in \cref{eqn:FDRUpperAndLowerBound} follows. 
%\end{proof}

\subsection{Proof of Theorem~\ref{thm:QvalueControlsFDR}}\label{sec:proofs:qval}
Let us prove \cref{thm:QvalueControlsFDR} in the setting of \cref{thm:FDRConvergenceWithRate}; the proof under the weaker conditions of \cref{thm:FDRConvergenceNoRate} is similar and omitted.  %Fix $\theta_0\in \ell_0(s_n;v_n) $ and let $S_0$ denote the support of $\theta_0$. 
As with the proof of \cref{thm:FDRConvergenceNoRate,thm:FDRConvergenceWithRate}, by \cref{lem:HatWConcentrates,lem:lvalsSmallForTrueSignals} there exists an event $\Aa$ of probability at least $1-\nu_n$ on which, for $K_n:=\#\braces{i\in S_0 : \ell_{i,w_-}>\delta_n}$,
\begin{equation*} \begin{split} \hat{w}&\in(w_-,w_+),\\ 
		K_n&\leq s_n (\rho_n+\nu_n).
	\end{split}
\end{equation*}
 By monotonicity of the $q$-values (\cref{lem:monotonicity}) it will be enough to consider the tests $\brackets{\II\braces{q_{i,w}<t}}_{1\leq i\leq n}$ for $w=w_-,w_+$.

{\em First step: control of false negatives.}
Define \[ S'_w=\#\braces{i\in S_0 : q_{i,w}<t},\]
so that  $\FNR(\vphi^\qval;\theta_0)=s_n^{-1}E_{\theta_0}[s_n-S'_{\hat{w}}]$.

In view of the fact that $\ell_{i,w_-}\geq q_{i,w_-}$ (see \cref{lem:ResultsFromCR18}) we note that for $n$ large we have
\[
S'_{w_-}=\sum_{i\in S_0} \II\braces{q_{i,w_-}<t} \geq  \sum_{i\in S_0} \II\braces{\ell_{i,w_-}< t}\geq s_n - K_n ,
\]
so that on $\Aa$, using monotonicity of $q$-values,
\[ S_{\hat{w}}'\geq S_{w_-}' \geq s_n(1-\nu_n-\rho_n).\]
In the current setting $\max(\nu_n,\rho_n)\leq \delta_n$, so that
\begin{equation*}\FNR(\vphi^{\qval};\theta_0)
	=s_n^{-1} E_{\theta_0}[s_n-S_{\hat{w}}'] \leq  	 E_{\theta_0}\brackets[\big]{(\nu_n+\rho_n)\II_{\Aa}+\II_{\Aa^c}}
	\leq \rho_n + 2\nu_n
	\leq 3\delta_n, \end{equation*}
proving \cref{eqn:FNRControl} for the $q$-value procedure.%\cref{eqn:FNRUpperBoundQvalue}.

We proceed with the proof of the FDR lower bound. As with the proofs in the $\Cl$-value case, the key remaining steps are to prove the concentration of, and to control the expectation of, the number of false positives, and we begin with the latter.

{\em Second step: bounding the expected number of false positives.}
Define functions $r:(0,1)^2\to [0,\infty)$ and $\chi : (0,1]\to [0,\infty)$ by
\begin{align}\label{eqn:def:r}
	r(w,t)&= \frac{wt}{(1-w)(1-t)},\\
	\label{eqn:def:chi}
	\chi(x)&= (\bar{\Phi}/\bar{G})^{-1}(x).
\end{align}
Note that $\chi$ is well-defined and strictly decreasing because $\bar{\Phi}/\bar{G}$ itself is strictly decreasing on $[0,\infty)$ (see \cref{lem:monotonicity}). Moreover, recalling the definition \cref{eqn:def:qvals} of the $q$-values, we note that for any $w\in[0,1)$ and $t\in [0,1)$, 
\[\braces{q_{i,w}< t}=\braces{\abs{X_i}> \chi(r(w,t))}.\]
We write
\[
V'_{w}=\sum_{i\notin S_0} \II\braces{q_{i,w}< t} = \sum_{i\notin S_0} \II\braces{|X_i|> \chi(r(w,t))}
\]
for the number of false positives of the multiple testing procedure $\brackets{\II\braces{q_{i,w}<t}}_{1\leq i\leq n}$.
 Note that $V'_{w}$ is increasing in $w\in (0,1)$ (\cref{lem:monotonicity}) and by definition of $\chi$ satisfies \begin{equation}\label{eqn:EV'}E_{\theta_0} V'_{w}=2(n-s_n)\overline{\Phi}(\chi(r(w,t)))= (n-s_n)r(w,t)2\overline{G}(\chi(r(w,t))),\end{equation} provided $r(w,t)\leq 1$. 
From \cref{lemmaBoundingqvalue}, we have
\begin{align}
	\tilde{m}(w)\left(1+ c \frac{\log\log (1/w)}{\log (1/w)}\right) \leq 2 \overline{G}(\chi(r(w,t))) \leq \tilde{m}(w)\left(1+ c' \frac{\log\log (1/w)}{\log (1/w)}\right)
	\label{Boundingqvalue}
\end{align} 
for $w$ small enough (smaller than some threshold possibly depending on $t$). 
Using the definition \eqref{eqn:def:w-} of $w_-$ to translate from $\tilde{m}$ to $m_1$, \cref{lem:m1boundsAnyBoundary} to lower bound $m_1(\theta_{0,i},w_-)$, and that $w_-\asymp (s_n/n) (\log(n/s_n))^{1/2}$ (which implies also that $\log\log(1/w_-)/\log(1/w_-)\asymp \log\log(n/s_n)/\log(n/s_n)=\eps_n$) by \cref{lem:ExistsW+-}, we obtain
\begin{align*}
 E_{\theta_0} V'_{w_-} &\geq  (n-s_n)\frac{w_-}{1-w_-} \frac{t}{1-t}\tilde{m}(w_-)\left(1+ c \frac{\log\log (1/w_-)}{\log (1/w_-)}\right)\\
 &\geq \frac{t}{1-t} w_-\sum_{i\in S_0} m_1(\theta_{0,i},w_-)(1+\nu_n)^{-1} \brackets*{1+ c \eps_n}\\
	&\geq  s_n \frac{t}{1-t} (1+\nu_n)^{-1}  (1-\rho_n) \left(1+ c_1 \eps_n\right)\\
	&\geq s_n \frac{t}{1-t}  \left(1+ c_2 \eps_n\right),
\end{align*}
for some constants $c_1,c_2>0$, because $(1+\nu_n)^{-1}=1-O(\nu_n)$ and $\max(\nu_n,\rho_n)=o(\eps_n)$.

{\em Third step: concentration of the number of false positives.}
Recalling that $\nu_n=\alpha s_n^{-1/2}(\log s_n)^{1/2}$, we see from an application of Bernstein's inequality (\cref{lem:Bernstein}) and the above that for some constant $c_3>0$ and for $a=a(t)$ large enough,
\begin{multline*}
	P_{\theta_0}(V'_{w_-} - E_{\theta_0}V'_{w_-} \geq -a\nu_n E_{\theta_0}V'_{w_-}) 
	\leq \exp\{-(3/8) a^2  \nu_n^2 E_{\theta_0}V'_{w_-}\} \\ \leq e^{-c_3 a^2 s_n \nu_n^2 t/(1-t) } \leq s_n^{-1/2}.
\end{multline*}

{\em Fourth step: deriving the FDR lower bound.}
Using the previous steps and upper bounding the number of true positives by $s_n$, we obtain by using again the monotonicity of the $q$-values and that the map $x\mapsto x/(a+x)$ is increasing on $x>-a$ that
\begin{align*}
	\FDP(\vphi^{\qval};\theta_0)&\geq \frac{V'_{\hat{w}}}{V'_{\hat{w}}+s_n}\\
	&\geq \frac{V'_{w_-}}{V'_{w_-}+s_n} \II\braces{\hat{w} \geq w_-}\\
	&\geq \frac{(1-a\nu_n) E_{\theta_0} V'_{w_-}}{(1-a\nu_n) E_{\theta_0} V'_{w_-}+s_n} \II\braces{\hat{w} \geq w_-,V'_{w_-} \geq (1-a\nu_n) E_{\theta_0}V'_{w_-}},
%		&\geq \frac{(1-a\nu_n) E_{\theta_0} V'_{w_-}}{(1-a\nu_n) E_{\theta_0} V'_{w_-}+s_n} \II\braces{\hat{w} \geq w_-,V'_{w_-} - E_{\theta_0}V'_{w_-} \geq -a\nu_n E_{\theta_0}V'_{w_-}},
\end{align*}
for $a=a(t)$ as above. 
Taking the expectation and using the bounds we have attained on probabilities, we find
\begin{align*}
	\FDR(\vphi^{\qval};\theta_0)&\geq \frac{(1-a\nu_n) E_{\theta_0} V'_{w_-}/s_n}{(1-a\nu_n) E_{\theta_0} V'_{w_-}/s_n+1}  - 2\nu_n.
\end{align*}
Using the previously obtained bound on $E_{\theta_0}V_{w_-}'$ and the fact that $(1-a\nu_n)(1+c_2\eps_n)\geq 1+c\eps_n$ for some $c>0$ and $n$ large enough, we find that
\begin{align*}
	\frac{(1-a\nu_n) E_{\theta_0} V'_{w_-}/s_n}{(1-a\nu_n) E_{\theta_0} V'_{w_-}/s_n+1}
	\geq & \frac{(1+c\eps_n)t}{(1+c\eps_n)t+1-t} = \frac{t+c\eps_nt}{1+c\eps_nt} \\ &= t + \frac{c\eps_nt(1-t)}{1+c\eps_nt} \geq t+ 0.5 t(1-t) c\eps_n,
\end{align*}
and we deduce the FDR lower bound.

{\em Fifth step: deriving the FDR upper bound.}
Recall that on the event $\Aa$, for $n$ large we have both $S'_{w_-}\geq  s_n (1-\nu_n-\rho_n) $ and $w_-\leq \hat{w}\leq w_+$. Again using that $x\mapsto x/(a+x)$ is increasing and here also that $x\mapsto b/(a+x)$ is decreasing on $x>-a$,
\begin{align*}
	\FDP(\vphi^{\qval};\theta_0)&\leq \frac{V'_{w_+}}{(V'_{w_+}+S'_{w_-})\vee 1} \II_\Aa + \II_{\Aa^c} \\
	&\leq \frac{V'_{w_+}}{V'_{w_+}+s_n (1-\nu_n-\rho_n)} \II_\Aa + \II_{\Aa^c} .
\end{align*}
Here one could use a concentration argument as for the lower bound, but noting that $x\mapsto x/(a+x)$ is concave, we bypass the need for this by appealing to Jensen's inequality to obtain
\begin{align}
	\FDR(\theta_0,\vphi^{\qval};\theta_0)
	&\leq \frac{E_{\theta_0} V'_{w_+}}{E_{\theta_0} V'_{w_+}+s_n (1-\nu_n-\rho_n)}  + \nu_n.\label{equintermupqval}
\end{align}
For upper bounding $E_{\theta_0} V'_{w_+}$, we proceed as for the lower bound part: using \eqref{Boundingqvalue}, the definition \eqref{eqn:def:w+} of $w_+$, \cref{lem:m1boundsAnyBoundary}, and that $w_+\asymp (s_n/n)(\log(n/s_n))^{1/2}$ (so that $\log\log(1/w_+)/\log(1/w_+)\asymp \eps_n$ and $w_+=o(\eps_n)$) by \cref{lem:ExistsW+-}, we find
\begin{align*}
	E_{\theta_0} V'_{w_+}&\leq (n-s_n)r(w_+,t) \tilde{m}(w_+)\left(1+ c'\eps_n\right)\\
%	&= r(w_+,t) \sum_{i\in S_0} m_1(\theta_{0,i},w_+)(1-\nu_n)^{-1} \left(1+ c' \eps_n\right)\\
	&\leq t(1-t)^{-1}(1-w_+)^{-1} s_n (1-\nu_n)^{-1} \left(1+ c' \eps_n\right)\\
	&\leq  t(1-t)^{-1} s_n (1+c\eps_n),
\end{align*}
for any $c>c'$, for $n$ larger than some $N(t)$, using again that $\nu_n=o(\eps_n)$. Substituting into \eqref{equintermupqval} and recalling that we also have $\rho_n=o(\eps_n)$ yields
\begin{align*}
	\FDR(\vphi^{\qval};\theta_0)
	&\leq \frac{t (1+c\eps_n)}{t (1+c\eps_n)+ (1-t)(1-\nu_n-\rho_n)}  + \nu_n\\
	&\leq \frac{t+tc\eps_n}{1+tc\eps_n}  + o(\eps_n)\\
	&\leq t +t(1-t)c\eps_n +o(\eps_n).
\end{align*}
This completes the upper bound and hence the proof.

\section{Core lemmas}\label{sec-core}
\subsection{Statements}\label{sec-core-statements}
The following monotonicity results are mostly clear from the definitions.
\begin{lemma}[Monotonicity]\label{lem:monotonicity}
	We have the following monotonicity results, all of which may be non-strict unless specified. 
	
	As $w\in(0,1)$ increases, with other parameters fixed (note that we typically apply these results with $n$ increasing and $w=w_n$ decreasing),
	\begin{align*}
	1=\ell_{i,0}(X)\geq \ell_{i,w}(X) &\downarrow 0 \qquad \text{(strictly)} \\
	1=q_{i,0}(X)\geq q_{i,w}(X)&\downarrow 0 \qquad \text{(strictly)} \\
	V_{\lambda,w}&\uparrow (n-s_n),~\lambda\in (0,1] 	\\
	 V'_{w}&\uparrow (n-s_n)
		 \\
	\postFDR_w(\vphi)&\downarrow 0 		\\
		\postFDR_u(\vphi_{\lambda,w})&\uparrow \frac{1}{n}\sum_{i=1}^n \ell_{i,u},~u\in(0,1) 
		 \\
%		\label{eqn:monotonicity:tildem}	0=\tilde{m}(0)\leq \tilde{m}(w)&\uparrow \int_{-\infty}^\infty \phi^2(t)/g(t) \dt \\ 
%		\label{eqn:monotonicity:m1}	m_1(\mu,w)&\downarrow m_1(\mu,1) \quad \text{for }\mu\neq 0 \text{ (non-strictly)} \\
%		\label{eqn:monotonicity:r}	0 = r(0,t)\leq  r(w,t)&\uparrow \infty \quad t\in(0,1) \\
%		\label{eqn:monotonicity:zeta}	\zeta(w)&\downarrow...\\
%		\label{eqn:monotonicity:Fw}	0=F_0(x)\leq F_w(x)&\uparrow 1 \\
%		\label{eqn:monotonicity:betawx1} \beta(x,w) \\
	%	\label{eqn:monotonicity:S} %\textstyle\sum_{i=1}^n\beta(X_i)=S(0)\geq 
		L'(w)=S(w)&\downarrow \textstyle\sum_{i=1}^n\tfrac{\beta(X_i)}{1+\beta(X_i)} \quad \text{(a.s.\ strictly).} 
\intertext{	For fixed $w,w'\in(0,1)$, as $\lambda\in [0,1]$ increases, }
%	$m_1(\mu,w)$ is nondecreasing wrt $\mu\in \RR_+$, 
%\begin{align*} %(\vphi_{\lambda,w})_i\uparrow 1 \text{ so also } 
		V_{\lambda,w}&\uparrow n-s_n, %\text{ as }\lambda\uparrow 1, % \text{ (non-strictly)} 
		\\
%	(g/\phi)(x)&\uparrow \infty \text{ as }x\uparrow \infty\\
%	-1<\beta(0)\leq \beta(x)&\uparrow \infty \text{ as }x\uparrow \infty  \\
%	\beta(w,x)&\uparrow 1/w\text{ as }x\uparrow \infty\\
F_{w}(\lambda)&\uparrow 1 %\text{ as }\lambda\uparrow 1, 
\qquad \text{(strictly)} \\
	E_{\theta_0=0}[\ell_{1,w} \mid \ell_{1,w'}<\lambda] &\uparrow E_{\theta_0=0}[\ell_{1,w}] %\text{ as }\lambda\uparrow 1, %\text{ (non-strictly?)} 
	\end{align*}
%	$g$, $\bar{\Phi}$, $F_w(\cdot)$, $\bar{G}$, $\bar{\Phi}/\bar{G}$, $\chii=(\bar{\Phi}/\bar{G})^{-1}$
Finally, we note that $(\phi/g)(x)$ and $(\bar{\Phi}/\bar{G})(x)$ decrease strictly as $x\in [0,\infty)$ increases, hence the functions $\ell(x;w)$ and $q(x;w)$ defined in \cref{eqn:def:Lvals,eqn:def:Qfunction} decrease on this set for fixed $w$.
\end{lemma}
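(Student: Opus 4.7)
Since the lemma collects many distinct monotonicity claims, my plan is to organise them by the variable that is moving. For the $w$-monotonicity of $\ell$- and $q$-values, I would rewrite
\[
\ell(x;w) = \Bigl[\,1 + \tfrac{w}{1-w}\tfrac{g(x)}{\phi(x)}\,\Bigr]^{-1},\qquad q(x;w) = \Bigl[\,1+\tfrac{w}{1-w}\tfrac{\overline{G}(|x|)}{\overline{\Phi}(|x|)}\,\Bigr]^{-1}.
\]
Since $w\mapsto w/(1-w)$ is strictly increasing on $(0,1)$ and the remaining factor is strictly positive (using $g>0$ and $\overline{G}>0$), both expressions are strictly decreasing in $w$ with the claimed endpoint limits $1$ and $0$. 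This immediately yields the monotonicity of $V_{\lambda,w}$ and $V'_{w}$ (sums of indicators depending monotonically on the $\ell_{i,w}$ or $q_{i,w}$) and of $\postFDR_w(\vphi)$ for fixed $\vphi$ (whose numerator is a $\vphi$-weighted sum of decreasing $\ell_{i,w}$ and whose denominator is independent of $w$). The strict monotonicity of the score follows by differentiating each summand of \eqref{eqn:def:Score}: $\partial_w[\beta(X_i)/(1+w\beta(X_i))]=-\beta(X_i)^2/(1+w\beta(X_i))^2\le 0$, with strict inequality whenever $\beta(X_i)\ne 0$. A direct examination of $g/\phi$ from \eqref{eqn:def:gInQuasiCauchy} shows that $\{\beta=0\}$ is a finite set, so $X_i$ having a continuous distribution gives strict monotonicity a.s.

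The subtler $w$-monotonicity is $\postFDR_u(\vphi_{\lambda,w})$ increasing in $w$ at a fixed $u$. The key observation is that the ranking of $(\ell_{i,w}(X))_{i}$ does not depend on $w$: since the final block of the lemma shows $x\mapsto \ell(x;w)$ is strictly decreasing in $|x|$ at each fixed $w$, ordering the indices by $\ell_{i,w}$ agrees with ordering by $|X_i|$ and so is invariant in $w$. Consequently the rejection set $A_w=\{i:\ell_{i,w}(X)<\lambda\}$ is a nested family, and as $w$ increases indices enter $A_w$ in order of decreasing $|X_j|$, equivalently in order of increasing $\ell_{j,u}$ for every fixed $u$. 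A newly inserted index therefore contributes an $\ell_{\cdot,u}$ value at least as large as the current average on $A_w$, so the running average $|A_w|^{-1}\sum_{i\in A_w}\ell_{i,u}$ is non-decreasing in $w$; the limit $n^{-1}\sum_i \ell_{i,u}$ is attained as $w\uparrow 1$ forces $A_w\uparrow\{1,\dots,n\}$.

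For the $\lambda$-monotonicity, $V_{\lambda,w}$ is a sum of indicators that is clearly monotone in $\lambda$, and $F_w$ is a CDF; its strict increase on $(0,1)$ comes from $\ell_{1,w}$ admitting a positive density there, via the change of variables $x\mapsto \ell(x;w)$. For the conditional expectation $E_{\theta_0=0}[\ell_{1,w}\mid \ell_{1,w'}<\lambda]$, the same rank-preservation idea applies: both $\ell_{1,w}$ and $\ell_{1,w'}$ are strictly decreasing functions of $|X_1|$, so the event $\{\ell_{1,w'}<\lambda\}$ coincides almost surely with $\{\ell_{1,w}<\tilde\lambda(\lambda)\}$ for some $\tilde\lambda$ strictly increasing in $\lambda$, reducing the claim to the standard one-line fact that the truncated mean $E[Y\mid Y<c]$ of a continuous random variable is non-decreasing in $c$. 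Finally, for the $x$-monotonicity of the ratios, plugging \eqref{eqn:def:gInQuasiCauchy} in gives $\phi(x)/g(x)=x^2 e^{-x^2/2}/(1-e^{-x^2/2})=2u/(e^u-1)$ with $u=x^2/2$, whose strict decrease on $(0,\infty)$ reduces to the elementary inequality $e^u>1+u$ for $u>0$; and for $\overline{\Phi}/\overline{G}$ I would apply the standard tail-MLR argument, cross-multiplying $\phi(t)/g(t)<\phi(x)/g(x)$ for $t>x$ and integrating over $t\ge x$ to obtain $\overline{\Phi}(x)/\overline{G}(x)<\phi(x)/g(x)$, whence differentiating $\overline{\Phi}/\overline{G}$ yields a strictly negative derivative.

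The main obstacle I expect is the $\postFDR_u(\vphi_{\lambda,w})$ claim in the second paragraph: isolating the rank-preservation property that drives it is the one genuine idea in the lemma, since every other item is either a one-line derivative computation or a standard truncation/tail-MLR argument.
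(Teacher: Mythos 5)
Your proposal is correct and follows essentially the same route as the paper's proof: the $w$-monotonicity of $\ell$- and $q$-values by writing them as $[1 + (w/(1-w))\cdot(\text{positive ratio})]^{-1}$, the score monotonicity via the termwise derivative $-\beta^2/(1+w\beta)^2$, the rank-preservation argument for $\postFDR_u(\vphi_{\lambda,w})$ (nested rejection sets in which newly admitted indices have $\ell_{\cdot,u}$ at least the current average), and the change of variables $\lambda\mapsto\xi_w^{-1}\circ\xi_{w'}(\lambda)$ reducing the conditional expectation to the elementary fact that truncated means $E[Y\mid Y<c]$ increase in $c$. The only cosmetic difference is that you prove the strict decrease of $\bar\Phi/\bar G$ directly by the tail-MLR argument where the paper cites Lemma S-9 of \cite{CR18}.
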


The following lemmas then form the core of the proofs of \cref{thm:FDRConvergenceWithRate,thm:FDRConvergenceNoRate}. Some ancillary results used in the proofs of these lemmas are relegated to \cref{sec:AuxiliaryLemmas}.
\begin{lemma}\label{lem:ExistsW+-}
	Under the assumptions of \cref{thm:FDRConvergenceNoRate}, define $\nu_n$ as in \cref{eqn:def:nu} with $\alpha>0$ arbitrary. Then for $n$ large there exist solutions $w_-\leq w_+$ to \cref{eqn:def:w-,eqn:def:w+}. Moreover, these solutions are almost surely unique and satisfy, for $w\in \braces{w_-,w_+}$,% satisfying %$s_n/n\leq w_-\leq w_+\lesssim (s_n/n) (\log (n/s_n))^{1/2}$. More sharply, for $w\in \braces{w_-,w_+}$, we have 
	%$w_-\leq w_+$ and
 \[w \asymp s_n (n-s_n)^{-1} \tilde{m}(w)^{-1} \asymp s_n(n-s_n)^{-1}(\log(n/s_n))^{1/2}\asymp (s_n/n) (\log( n/s_n))^{1/2}. \] 
\end{lemma}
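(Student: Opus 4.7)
The plan is to reduce the defining equations \eqref{eqn:def:w-}--\eqref{eqn:def:w+} to a single implicit scalar equation in $w$, and then to solve it by monotonicity together with a one-step fixed-point argument. Two asymptotic ingredients are required. First, I would establish the quasi-Cauchy asymptotic
\[ \tilde{m}(w)\asymp (\log(1/w))^{-1/2}\qquad\text{as }w\to 0,\]
by splitting the integral $\int \phi(x)\beta(x)/(1+w\beta(x))\,\dx$ at the threshold $\zeta(w)\asymp\sqrt{2\log(1/w)}$ where $w\beta(\zeta(w))\approx 1$: on $\{|x|\leq\zeta(w)\}$ the integrand is close to $\phi\beta=g-\phi$, and on $\{|x|>\zeta(w)\}$ it is close to $\phi/w$, each of the two pieces contributing to order $(\log(1/w))^{-1/2}$. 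This computation is already present in closely related form in \cite{JS04,CR18} and can be recycled. Second, using the strong signal assumption $|\theta_{0,i}|\geq\sqrt{2\log(n/s_n)}+v_n$ with $v_n\to\infty$, I would show that $m_1(\theta_{0,i},w)=(1+o(1))/w$ uniformly in $i\in S_0$, provided $w$ is of the target order $(s_n/n)\sqrt{\log(n/s_n)}$; the point is that for $X=\theta_{0,i}+\epsilon$ one has $|X|>\zeta(w)$ with probability $1-o(1)$, so that $\beta(X)/(1+w\beta(X))\approx 1/w$. This is exactly the type of bound used later (via the lemma referenced as \texttt{m1boundsAnyBoundary} in \cref{sec:proofs:qval}).

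Combining these two ingredients, both equations \eqref{eqn:def:w-} and \eqref{eqn:def:w+} reduce, up to multiplicative factors of the form $1+o(1)$ coming from $1\pm\nu_n$ and from the $m_1$-approximation, to the scalar equation
\[ w\,\tilde m(w)\;\asymp\;\frac{s_n}{n-s_n}.\]
Substituting $\tilde m(w)\asymp(\log(1/w))^{-1/2}$ yields $w/\sqrt{\log(1/w)}\asymp s_n/n$, and a single fixed-point step (take logs of both sides to read off $\log(1/w)=\log(n/s_n)+O(\log\log(n/s_n))$) gives $w\asymp (s_n/n)\sqrt{\log(n/s_n)}$. The first equivalence in the lemma then restates the reduced equation itself, the second uses $\tilde m(w)\asymp(\log(1/w))^{-1/2}\asymp(\log(n/s_n))^{-1/2}$, and the third combines the two.

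For existence, uniqueness, and the ordering $w_-\leq w_+$, I rely on strict monotonicity in $w$. From $\partial_w[\beta/(1+w\beta)]=-\beta^2/(1+w\beta)^2\leq 0$, which becomes strict after integration against any non-degenerate density (note $\beta$ vanishes only on a discrete set), one has that $w\mapsto\sum_{i\in S_0}m_1(\theta_{0,i},w)$ is strictly decreasing while $w\mapsto(1\mp\nu_n)(n-s_n)\tilde m(w)$ is strictly increasing. Hence each function
\[ F_\pm(w):=\sum_{i\in S_0}m_1(\theta_{0,i},w)-(1\mp\nu_n)(n-s_n)\tilde m(w)\]
is strictly decreasing; the asymptotics above yield $F_\pm(w)\to +\infty$ as $w\to 0^+$ (because the left term blows up like $s_n/w$, faster than $n/\sqrt{\log(1/w)}$) and $F_\pm(w)<0$ for $w$ of constant order bounded away from $0$, so the intermediate value theorem provides a unique solution in each case, and $w_-\leq w_+$ follows since replacing $1-\nu_n$ by $1+\nu_n$ raises the right-hand side and therefore pushes the crossing to a larger $w$. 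The main technical obstacle is the sharp asymptotic for $\tilde m(w)$, where the positive contribution from the bulk $|x|\leq\zeta(w)$ and the negative contribution from the tail $|x|>\zeta(w)$ partially cancel; this cancellation is delicate but is handled in the prior literature \cite{JS04,CR18} and should transfer essentially verbatim.
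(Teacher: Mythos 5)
Your proposal takes essentially the same route as the paper's own proof: reduce the defining equations to $w\tilde{m}(w)\asymp s_n/(n-s_n)$ using the two asymptotic ingredients $\tilde{m}(w)\asymp(\log(1/w))^{-1/2}$ and $m_1(\theta_{0,i},w)\asymp 1/w$ (the latter from the strong signal assumption, captured in the paper's \cref{lem:m1boundsAnyBoundary}), then deduce $w\asymp(s_n/n)(\log(n/s_n))^{1/2}$, and finally invoke monotonicity of $\tilde m$ and $m_1$ together with the intermediate value theorem for existence, uniqueness, and the ordering $w_-\leq w_+$. The only minor divergence is in the choice of IVT endpoints (you send $w\to 0^+$ and to constant order; the paper evaluates at $w=s_n/n$ and $w=C(s_n/n)(\log(n/s_n))^{1/2}$, which is cleaner since the two-sided bounds $1/(2w)\leq m_1\leq 1/w$ and $\tilde m(w)\asymp(\log(1/w))^{-1/2}$ are guaranteed only for $w$ in a range $[s_n/n,\omega_0]$ rather than for all small $w$), but this does not affect correctness.
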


\begin{lemma}\label{lem:ExistsLambda+-} 	In the setting of \cref{thm:FDRConvergenceNoRate}, for any constant $A$ there exist unique solutions $\lambda_-<\lambda_+$ to \cref{eqn:def:lambda+,eqn:def:lambda-}, and these solutions satisfy
	\begin{equation}\label{eqn:Lambda+-To1} 1- \lambda_- \asymp 1-\lambda_+ \asymp \delta_n, \end{equation}
	with suppressed constants depending on $t$.
	We further note that for some constants $C,c>0$ depending on $t$,
	\begin{align}\label{eqn:Elambda+to1} E_{\theta_0=0}[\ell_{1,w_+} \mid \ell_{1,w_-} < \lambda_+] &\geq 1- C\eps_n, \\ \label{eqn:Elambda-to1} E_{\theta_0=0}[\ell_{1,w_-}\mid \ell_{1,w_+}<\lambda_-]&\leq 1- c\eps_n,\end{align} 
	and that for some $D,d>0$ depending on $t$, recalling $F_w(\lambda):=P_{\theta_0=0}(\ell_{1,w}<\lambda),$
	\begin{align}
\label{eqn:Flambda+geq}		(n-s_n) F_{w_-}(\lambda_+)&\leq \frac{t}{1-t}s_n\brackets[\big]{ 1+D\max(\eps_n,\nu_n) }, \\
\label{eqn:Flambda-geq}		(n-s_n) F_{w_+}(\lambda_-)&\geq\frac{t}{1-t}s_n \brackets[\big]{ 1+d \eps_n - \tfrac{A}{t} \max(\nu_n,\rho_n)} ,
	\end{align}
for all $n$ large enough.
\end{lemma}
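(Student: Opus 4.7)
The plan is to establish existence, uniqueness, and ordering by monotonicity, then derive the precise asymptotics via inversion of $\ell(\zeta;w)=\lambda$ and a Laplace-type expansion of the conditional expectation.

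For existence, consider $G_+(\lambda):=(n-s_n)F_{w_-}(\lambda)\brackets[\big]{E_{\theta_0=0}[\ell_{1,w_+}\mid \ell_{1,w_-}<\lambda]-t}$. Under \cref{lem:monotonicity}, both $F_{w_-}(\lambda)$ and the conditional expectation increase continuously in $\lambda$; the former ranges over $[0,1]$, while the latter rises from $0$ to $E_{\theta_0=0}[\ell_{1,w_+}]$, which exceeds $t$ for $w_+$ small. Thus $G_+$ is eventually strictly increasing and covers a range containing $ts_n+As_n\nu_n$, so the intermediate value theorem furnishes a unique $\lambda_+$. An analogous argument produces $\lambda_-$. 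To see $\lambda_-<\lambda_+$, compare at fixed $\lambda$: since $w_-<w_+$ and $\ell_{i,w}$ decreases in $w$, we have $F_{w_-}(\lambda)\le F_{w_+}(\lambda)$ and $E[\ell_{1,w_+}\mid \ell_{1,w_-}<\lambda]\le E[\ell_{1,w_-}\mid \ell_{1,w_+}<\lambda]$, so the LHS of \eqref{eqn:def:lambda+} is dominated by that of \eqref{eqn:def:lambda-}; combined with the reverse inequality on the right-hand sides this forces $\lambda_-<\lambda_+$.

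For the quantitative bounds I invert the defining equation $(g/\phi)(\zeta)=(1-w)(1-\lambda)/(w\lambda)$ using the quasi-Cauchy identity $(g/\phi)(x)=x^{-2}(e^{x^2/2}-1)$. Writing $u=\zeta^2/2$, this reads $u-\log(2u)=\log((1-\lambda)/w)+O(w)$, so under the candidate scaling $1-\lambda\asymp\delta_n$ with $w\asymp(s_n/n)(\log(n/s_n))^{1/2}$ (from \cref{lem:ExistsW+-}) one obtains $u=\log(n/s_n)-\tfrac12\log\log(n/s_n)+O(1)$. Combining with Mills' ratio $\bar\Phi(\zeta)\sim\phi(\zeta)/\zeta$ and the identity $e^{-u}=w/[2u(1-\lambda)]\,(1+o(1))$ derived from the above yields $F_w(\lambda)=2\bar\Phi(\zeta)\asymp w/[(1-\lambda)(\log(n/s_n))^{3/2}]\asymp s_n/n$, which after rearrangement of \eqref{eqn:def:lambda+}--\eqref{eqn:def:lambda-} forces $1-\lambda_\pm\asymp\delta_n$.

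The crucial step is the asymptotic evaluation of the conditional expectation. Writing $\{\ell_{1,w_-}<\lambda\}$ as $\{|X_1|>\zeta_-\}$ with $\zeta_-=\zeta(\lambda,w_-)$ and changing variables $x=\zeta_-+y/\zeta_-$, the density $\phi(x)\,dx/\bar\Phi(\zeta_-)$ becomes $e^{-y}\,dy\,(1+o(1))$ on the bulk and $\ell(\zeta_-+y/\zeta_-;w_+)\sim(1+\eta_0\,e^{y})^{-1}$ there, where $\eta_0:=w_+(g/\phi)(\zeta_-)/(1-w_+)\sim(w_+/w_-)(1-\lambda)\asymp\delta_n$. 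Partial fractions then give
\[ \int_0^\infty \frac{e^{-y}}{1+\eta_0 e^y}\,dy=1-\eta_0\log(1/\eta_0)+O(\eta_0), \]
yielding $1-E_{\theta_0=0}[\ell_{1,w_+}\mid\ell_{1,w_-}<\lambda_+]\asymp\delta_n\log(1/\delta_n)=\eps_n$, which is \eqref{eqn:Elambda+to1}--\eqref{eqn:Elambda-to1}. Substituting back into \eqref{eqn:def:lambda+}--\eqref{eqn:def:lambda-}, while tracking the $O(\nu_n)$ discrepancy between $w_\pm$ via \cref{lem:ExistsW+-}, produces \eqref{eqn:Flambda+geq}--\eqref{eqn:Flambda-geq}. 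The main technical obstacle is controlling the error in this Laplace-type expansion sharply enough to secure matching upper and lower bounds of order $\eps_n$ (not merely $\delta_n$): one must bound both the tail contribution where $y$ is comparable to $\zeta_-$ (so the Gaussian correction $e^{-y^2/(2\zeta_-^2)}$ is no longer close to $1$) and the error in the approximation $\ell(\zeta_-+y/\zeta_-;w_+)\approx(1+\eta_0 e^y)^{-1}$.
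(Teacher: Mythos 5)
Your high-level plan matches the paper's: existence/uniqueness via monotonicity and the intermediate value theorem, a pincer argument forcing $1-\lambda_\pm\asymp\delta_n$, and an asymptotic evaluation of the conditional expectation. The ordering argument you give (decomposing the comparison into $F_{w_-}\le F_{w_+}$ and $E[\ell_{1,w_+}\mid\ell_{1,w_-}<\lambda]\le E[\ell_{1,w_-}\mid\ell_{1,w_+}<\lambda]$, then invoking strict increasingness of the left side of \cref{eqn:def:lambda+}) is a mild variant of the paper's, which bounds the product $E[(\ell_{1,w_+}-t)\II\{\ell_{1,w_-}<\lambda\}]$ against $E[(\ell_{1,w_-}-t)\II\{\ell_{1,w_+}<\lambda\}]$ directly; your decomposition is correct provided you note that $E[\ell_{1,w_+}\mid\ell_{1,w_-}<\lambda]>t$ in the relevant range, which holds since $\lambda\to 1$. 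Where you genuinely diverge is in the computation of $1-E_{\theta_0=0}[\ell_{1,w_+}\mid\ell_{1,w_-}<\lambda]$: you pass to the Gumbel change of variables $x=\zeta_-+y/\zeta_-$ and the closed-form integral $\int_0^\infty e^{-y}/(1+\eta_0 e^y)\,dy=1-\eta_0\log((1+\eta_0)/\eta_0)$, which cleanly exposes the mechanism producing $\eps_n=\delta_n\log(1/\delta_n)$. The paper instead proves the corresponding \cref{lem:ExpectationsSlowlyTo1} by writing the integral in terms of $h_{w_1}(x)=w_1\beta(x)\phi(x)/(1+w_1\beta(x))$ and using two-sided bounds that differ on $[\xi_{w_2},\zeta(w_1)]$ and $[\zeta(w_1),\infty)$ — this deliberately avoids the fine Laplace-type error estimates you flag as "the main technical obstacle." That flag is accurate: your route requires checking that the Mills-ratio $O(\zeta_-^{-2})=O(\delta_n)$ correction, the $O(y/\zeta_-^2)$ and $O(y^2/\zeta_-^2)$ corrections to $(g/\phi)(\zeta_-+y/\zeta_-)\approx(g/\phi)(\zeta_-)e^y$, and the truncation of the integral at $y\asymp\zeta_-$ all contribute $o(\eps_n)$ — this is where the work hides, and is done rigorously in the paper's \cref{lem:ExpectationsSlowlyTo1} and \cref{lem:zeta-xi}. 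One further small point: obtaining two-sided bounds \cref{eqn:Elambda+to1}--\cref{eqn:Elambda-to1} with the $C$ and $c$ matching in order requires that the error terms be controlled uniformly on both sides; your partial-fraction identity is exact so the only issue is the change-of-variables error, but you should be explicit that the pincer for $1-\lambda_\pm$ must be run simultaneously with the conditional-expectation asymptotics (as the paper does in \cref{eqn:1-ErhoAsymptotics}), since the two are coupled through \cref{eqn:def:lambda+}--\cref{eqn:def:lambda-}.
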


\begin{lemma}\label{lem:HatWConcentrates}
	Under the assumptions of \cref{thm:FDRConvergenceNoRate}, 	recalling the definition \cref{eqn:def:wHat} of $\hat{w}$ and the definitions \cref{eqn:def:w+,eqn:def:w-} of $w_\pm$, we have
	\begin{equation}P_{\theta_0}(\hat{w}\not \in(w_-,w_+)) =o(\nu_n),\end{equation}
	provided the constant $\alpha$ in the definition \cref{eqn:def:nu} of $\nu_n$ is large enough.
\end{lemma}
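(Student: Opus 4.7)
The plan is to exploit strict monotonicity of the score $S(w)=L'(w)$ (\cref{lem:monotonicity}), the fact that $w_\pm$ are designed precisely to make $w\mapsto E_{\theta_0}[S(w)]$ have a definite, nonzero sign of magnitude $\nu_n$, and Bernstein-type concentration of $S(w_\pm)$ around their means.

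First, since $S$ is almost surely strictly decreasing and $w_\pm$ lie in the interior of $[1/n,1]$ for $n$ large (by \cref{lem:ExistsW+-}, $w_\pm\asymp (s_n/n)(\log(n/s_n))^{1/2}$, hence $\gg 1/n$ and $\ll 1$ since $s_n\to\infty$, $s_n/n\to 0$), a short case analysis on the maximizer $\hat w\in[1/n,1]$ gives $\braces{S(w_-)>0}\subseteq\braces{\hat w>w_-}$ and $\braces{S(w_+)<0}\subseteq\braces{\hat w<w_+}$; it therefore suffices to bound the probability of each complementary event by $o(\nu_n)$. Splitting $S(w)$ into signal and null contributions and invoking the defining equations \cref{eqn:def:w-,eqn:def:w+} yields
\[
E_{\theta_0}[S(w_\pm)]=\sum_{i\in S_0}m_1(\theta_{0,i},w_\pm)-(n-s_n)\tilde m(w_\pm)=\pm\,\nu_n(n-s_n)\tilde m(w_\pm),
\]
whose absolute values are of order $\nu_n n(\log(n/s_n))^{-1/2}$ by \cref{lem:ExistsW+-}. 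It thus suffices to show $|S(w_\pm)-E_{\theta_0}[S(w_\pm)]|$ is at most half this size except on an event of probability $o(\nu_n)$.

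Next, I would apply Bernstein's inequality (\cref{lem:Bernstein}) to the independent summands $\beta(X_i)/(1+w\beta(X_i))\in[-1/w,1/w]$ (using $\beta\geq-1$ and $w\leq 1/2$). Coupled with a sharp second-moment estimate $E_{\theta_0=0}[(\beta(X_1)/(1+w\beta(X_1)))^2]\lesssim \tilde m(w)/w$ on null coordinates and the trivial bound $1/w^2$ per signal coordinate, the variance of $S(w)$ at $w\in\braces{w_-,w_+}$ is $\lesssim n\tilde m(w)/w+s_n/w^2\asymp n^2/(s_n\log(n/s_n))$. With $M\asymp 1/w\asymp (n/s_n)(\log(n/s_n))^{-1/2}$ and target deviation of the stated order, the Gaussian part of Bernstein dominates and gives
\[
P_{\theta_0}\brackets*{|S(w_\pm)-E_{\theta_0}[S(w_\pm)]|\geq\tfrac12\nu_n(n-s_n)\tilde m(w_\pm)}\lesssim \exp(-c\alpha^2\log s_n)=s_n^{-c\alpha^2},
\]
(the sub-exponential contribution $\exp(-c'\alpha s_n^{1/2}(\log s_n)^{1/2})$ is negligible because $t/M\asymp\alpha s_n^{1/2}(\log s_n)^{1/2}\to\infty$). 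Since $\nu_n=\alpha s_n^{-1/2}(\log s_n)^{1/2}$, the displayed bound is $o(\nu_n)$ as soon as $c\alpha^2>1/2$, i.e.\ for $\alpha$ larger than an absolute constant.

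The main obstacle is the sharp second-moment estimate $E_{\theta_0=0}[(\beta/(1+w\beta))^2]\lesssim \tilde m(w)/w$: the naive bound via $E[\beta^2]$ is unusable because $E_{\theta_0=0}[\beta^2]=\infty$ for the quasi-Cauchy slab \cref{eqn:def:gInQuasiCauchy}. One has to exploit the $1/w$-truncation built into $\beta/(1+w\beta)$, split the integral at the threshold $|X|\sim\sqrt{2\log(1/w)}$ where $\beta(X)\sim 1/w$, and then relate the Gaussian tail contribution to $\tilde m(w)\asymp(\log(n/s_n))^{-1/2}$; this matches the order of the tail mass carried by $\tilde m$ itself and is what drives the sharpness of the final concentration bound.
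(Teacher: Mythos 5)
Your proposal follows essentially the same route as the paper: reduce $\{\hat w\not\in(w_-,w_+)\}$ to a sign condition on the score $S$ at $w_\pm$, read off $|E_{\theta_0}S(w_\pm)|=\nu_n(n-s_n)\tilde m(w_\pm)$ from the defining equations \cref{eqn:def:w-,eqn:def:w+}, and apply Bernstein's inequality with the variance bound $V_2\lesssim n\tilde m(w)/w\asymp n^2/(s_n\log(n/s_n))$ so that the Gaussian part gives $\exp(-c\alpha^2\log s_n)=o(\nu_n)$ for $\alpha$ large. The only cosmetic difference is that the paper invokes the second-moment estimates on $\beta/(1+w\beta)$ directly from \cite{CR18} (recorded in \cref{lem:ResultsFromCR18}) rather than rederiving them, which is the step you correctly flag as the main technical obstacle.
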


\begin{lemma}\label{lem:HatLambdaConcentrates}
	Under the assumptions of \cref{thm:FDRConvergenceNoRate}, 	recalling the definition \cref{eqn:def:lambdahat} of $\hat{\lambda}$ as the threshold of $\vphi^{\Cl}$ and the definitions \cref{eqn:def:lambda+,eqn:def:lambda-} of $\lambda_\pm$, 
	we have
	\begin{equation}
		\label{eqn:LambdaHatConcentrates}
		P_{\theta_0}(\hat{\lambda}\not \in [\lambda_-,\lambda_+]) = o(\nu_n),\end{equation}
	provided the constant $A=A(t)$ is large enough in the definitions of $\lambda_\pm$.
\end{lemma}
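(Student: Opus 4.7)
The plan is to exploit the postFDR characterisation of $\hat{\lambda}$ from \eqref{eqn:HatLambdaPostFDRRelation}, combined with the localisation $\hat{w} \in (w_-, w_+)$ from \cref{lem:HatWConcentrates} and the monotonicity of the $\ell$-values in $w$ (\cref{lem:monotonicity}). On the event $\Aa_0 := \{w_- \leq \hat{w} \leq w_+\}$, which has probability at least $1 - o(\nu_n)$, the sandwich $\ell_{i, w_+} \leq \ell_{i, \hat{w}} \leq \ell_{i, w_-}$ holds uniformly in $i$, which allows one to sandwich the random ratio $\postFDR_{\hat{w}}(\vphi_{\lambda, \hat{w}})$ between deterministic ratios depending only on the pair $(w_-, w_+)$.

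To show $\hat{\lambda} < \lambda_+$, it suffices by \eqref{eqn:HatLambdaPostFDRRelation} to prove $\postFDR_{\hat{w}}(\vphi_{\lambda_+, \hat{w}}) > t$ with probability $1 - o(\nu_n)$. The sandwich, together with the observation that the signal contribution to the numerator is non-negative and to the denominator at most $s_n$, yields
\[
\postFDR_{\hat{w}}(\vphi_{\lambda_+, \hat{w}}) \;\geq\; \frac{\sum_{i \notin S_0} \ell_{i, w_+} \II\{\ell_{i, w_-} < \lambda_+\}}{s_n + V_{\lambda_+, w_+}}.
\]
The expectation of the numerator is $(n-s_n) F_{w_-}(\lambda_+) \, E_{\theta_0 = 0}[\ell_{1, w_+} \mid \ell_{1, w_-} < \lambda_+]$, which by the defining equation \eqref{eqn:def:lambda+} equals $t[s_n + (n-s_n) F_{w_-}(\lambda_+)] + A s_n \nu_n$. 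Concentrating the numerator via Bernstein's inequality (\cref{lem:Bernstein}), concentrating $V_{\lambda_+, w_+}$ via \cref{lem:VLambdaWConcentrates}, and bridging $E[V_{\lambda_+, w_+}]$ to $(n-s_n)F_{w_-}(\lambda_+)$ via \cref{lem:EVw+closetoEVw-}, one compares the resulting lower bound to $t$; for $A = A(t)$ large enough, the slack $A s_n \nu_n$ strictly dominates all error terms.

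The lower bound $\hat{\lambda} \geq \lambda_-$ proceeds dually: one upper-bounds
\[
\postFDR_{\hat{w}}(\vphi_{\lambda_-, \hat{w}}) \;\leq\; \frac{K_n + s_n \delta_n + \sum_{i \notin S_0} \ell_{i, w_-} \II\{\ell_{i, w_+} < \lambda_-\}}{(s_n - K_n) + V_{\lambda_-, w_-}},
\]
where $K_n := \#\{i \in S_0 : \ell_{i, w_-} > \delta_n\}$ is bounded by $s_n (\rho_n + \nu_n)$ on an event of probability $1 - o(\nu_n)$ (\cref{lem:lvalsSmallForTrueSignals}) and the remaining $s_n \delta_n$ accounts for signals whose $\ell$-value at $w_-$ is below $\delta_n$. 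The total signal contamination $K_n + s_n \delta_n$ is of order $s_n \max(\nu_n, \rho_n, \delta_n)$, which is precisely why the right-hand side of \eqref{eqn:def:lambda-} is enlarged to $A s_n \max(\nu_n, \rho_n, \delta_n)$. The remainder of the argument parallels the upper side, combining the defining identity \eqref{eqn:def:lambda-}, Bernstein concentration of the numerator, \cref{lem:VLambdaWConcentrates} applied to $V_{\lambda_-, w_-}$, and \cref{lem:EVw+closetoEVw-}.

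The main obstacle is the careful bookkeeping of three distinct sources of error: the gap between $\hat{w}$ and $w_\pm$ (handled via $w$-monotonicity of $\ell$-values together with \cref{lem:EVw+closetoEVw-} to convert $E[V_{\lambda_\pm, w_+}]$ into $E[V_{\lambda_\pm, w_-}]$), the non-vanishing contribution of signal coordinates to the postFDR (handled via $K_n$ and the strong-signal assumption), and the Bernstein-type fluctuations of both the numerator and of $V_{\lambda_\pm, w_\pm}$. All three contributions must be absorbed by the $A s_n \nu_n$ (respectively $A s_n \max(\nu_n, \rho_n, \delta_n)$) slack built into the definitions \eqref{eqn:def:lambda+}, \eqref{eqn:def:lambda-}, and this bookkeeping is ultimately what dictates the required choice of $A = A(t)$.
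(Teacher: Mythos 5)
Your lower-bound step ($\hat\lambda\ge\lambda_-$) is sound: using $V_{\lambda_-,w_-}$ rather than the paper's $V_{\lambda_-,w_+}$ in the denominator introduces an error of order $s_n\max(\nu_n,\rho_n,\delta_n)$ via \cref{lem:EVw+closetoEVw-}, but the definition \cref{eqn:def:lambda-} was built precisely with a slack of that same order, so the bookkeeping closes. However, your upper-bound step ($\hat\lambda\le\lambda_+$) has a genuine gap. After sandwiching you land on the denominator $s_n+V_{\lambda_+,w_+}$, and then bridge $E[V_{\lambda_+,w_+}]$ to $(n-s_n)F_{w_-}(\lambda_+)$ via \cref{lem:EVw+closetoEVw-}. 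That bridge costs a multiplicative factor $1+B\max(\nu_n,\rho_n,\delta_n)$, i.e.\ an additive error of order $s_n\max(\nu_n,\rho_n,\delta_n)$ since $E[V_{\lambda_+,w_-}]\asymp s_n$. But the slack built into \cref{eqn:def:lambda+} is only $As_n\nu_n$, and in the regime of \cref{thm:FDRConvergenceWithRate} one has $\nu_n=o(\delta_n)$ (see \cref{eqn:nu<delta}), so $s_n\max(\nu_n,\rho_n,\delta_n)\gg s_n\nu_n$. No finite $A=A(t)$ can absorb the bridging error, and your claim that ``the slack $As_n\nu_n$ strictly dominates all error terms'' is false.

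The paper avoids this obstacle by not bridging at all. Instead of sandwiching the $\ell$-values directly, it uses the finer monotonicity of $\postFDR$ from \cref{lem:monotonicity}: since $\postFDR_u(\vphi_{\lambda,w})$ is nondecreasing in the $w$-argument of the procedure and $\postFDR_u(\vphi)$ is nonincreasing in $u$, one gets
\[
\postFDR_{\hat w}(\vphi_{\lambda_+,\hat w})\ \ge\ \postFDR_{\hat w}(\vphi_{\lambda_+,w_-})\ \ge\ \postFDR_{w_+}(\vphi_{\lambda_+,w_-})
\ \ge\ \frac{\sum_{i\notin S_0}\ell_{i,w_+}\II\{\ell_{i,w_-}<\lambda_+\}}{s_n+V_{\lambda_+,w_-}},
\]
which has $V_{\lambda_+,w_-}$, not $V_{\lambda_+,w_+}$, in the denominator. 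Its expectation is exactly $(n-s_n)F_{w_-}(\lambda_+)$, matching the left-hand side of \cref{eqn:def:lambda+}, so \cref{lem:EVw+closetoEVw-} is never needed in this lemma and the only error terms are of order $s_n\nu_n$ (from Bernstein concentration), which the $As_n\nu_n$ slack absorbs. The key step you are missing is the inequality $\postFDR_{\hat w}(\vphi_{\lambda_+,\hat w})\ge\postFDR_{\hat w}(\vphi_{\lambda_+,w_-})$: shrinking the rejection set by replacing $\hat w$ with $w_-$ in the procedure can only decrease the posterior FDR, since the indices removed are precisely those with the largest $\ell$-values among the rejections.
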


\begin{lemma}\label{lem:lvalsSmallForTrueSignals}
	In the setting of \cref{thm:FDRConvergenceNoRate}, recall that $S_0$ denotes the support of $\theta_0$ as in \cref{eqn:def:S0} and define the (random) set $S_1=\braces{i \in S_0 : \ell_{i,w_-}\leq \delta_n},$ where $w_-$ is as in \cref{eqn:def:w-}. Then, defining 
	\begin{equation}\label{eqn:def:Kn} K_n=\abs{S_0\setminus S_1} = \#\braces{i\in S_0 : \ell_{i,w_-}> \delta_n},\end{equation} for all $n$ large enough we have
	\begin{equation}P_{\theta_0}\brackets[\big]{K_n/s_n> \rho_n+\nu_n}= o(\nu_n),\end{equation}
	provided the constant $\alpha$ in the definition \cref{eqn:def:nu} of $\nu_n$ is large enough.
\end{lemma}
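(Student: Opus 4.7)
The plan is to exploit the deterministic nature of $w_-$ (which, by \cref{lem:ExistsW+-}, is a fixed sequence depending only on $\theta_0$) to express $K_n$ as a sum of \emph{independent} Bernoulli indicators $Y_i = \II\{\ell_{i,w_-} > \delta_n\}$ over $i \in S_0$. The strategy is then a two-step argument: first bound each tail probability $p_i := P_{\theta_0}(Y_i=1)$ by $\rho_n$, then apply Bernstein's inequality to $K_n = \sum_{i \in S_0} Y_i$ to upgrade the mean bound $\EE K_n \leq s_n \rho_n$ to a high-probability bound of the form $K_n \leq s_n(\rho_n + \nu_n)$.

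For the tail bound on $p_i$, I would invert the $\ell$-value threshold: since $x \mapsto \ell(x;w_-)$ is decreasing in $|x|$ (\cref{lem:monotonicity}), the event $\{\ell_{i,w_-} > \delta_n\}$ equals $\{|X_i| < \chi_n\}$, where $\chi_n$ solves
\[ \chi_n^{-2}(e^{\chi_n^2/2}-1) \;=\; \frac{(1-\delta_n)(1-w_-)}{w_- \delta_n}. \]
Using $w_- \asymp (s_n/n)\sqrt{\log(n/s_n)}$ from \cref{lem:ExistsW+-} and $\delta_n = 1/\log(n/s_n)$, the right-hand side is of order $(n/s_n)\sqrt{\log(n/s_n)}$, and a routine asymptotic inversion yields
\[ \chi_n^2 \;=\; 2\log(n/s_n) + 3\log\log(n/s_n) + O(1). \]
Since $|\theta_{0,i}| \geq \sqrt{2\log(n/s_n)} + v_n$, a Taylor expansion of the square root gives $\chi_n - |\theta_{0,i}| \leq -v_n + O(\log\log(n/s_n)/\sqrt{\log(n/s_n)})$. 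Because the error term tends to $0$ independently of $v_n$, for $n$ large one has $\chi_n - |\theta_{0,i}| \leq -v_n/2$, so that $p_i \leq \Phi(-v_n/2) \leq e^{-v_n^2/9} = \rho_n$ for $n$ large, using a standard Mills ratio bound.

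For the concentration step, I would apply Bernstein's inequality (\cref{lem:Bernstein}) to the centered sum $K_n - \EE K_n$ with $\sigma^2 \leq \EE K_n \leq s_n \rho_n$ and deviation $t = s_n \nu_n$, giving
\[ P_{\theta_0}\bigl(K_n > s_n(\rho_n+\nu_n)\bigr) \;\leq\; \exp\!\left(-\frac{s_n \nu_n^2}{2(\rho_n + \nu_n/3)}\right). \]
Splitting cases according to whether $\rho_n \leq \nu_n$ or $\rho_n > \nu_n$, the exponent is bounded above by a constant times $-s_n \nu_n = -\alpha \sqrt{s_n \log s_n}$ in the first case and by $-s_n \nu_n^2 / (4 \rho_n) \leq -(\alpha^2/4) \log s_n$ in the second case (using $\rho_n \leq 1$). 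In either case, taking $\alpha$ large enough yields a bound of order $s_n^{-c\alpha^2}$ which is $o(\nu_n) = o(\sqrt{\log s_n/s_n})$.

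The main obstacle is the precise computation of $\chi_n$ at the log-log level: the argument crucially needs the gap $\chi_n - |\theta_{0,i}|$ to be genuinely negative (and of order $-v_n$) rather than merely bounded, which is what allows the assumption $v_n \to \infty$ to be used without a quantitative rate. Establishing this requires carefully tracking the $2\log\chi_n$ correction in the equation $\chi_n^2/2 \approx \log(1/(w_-\delta_n)) + 2\log\chi_n$, and verifying that the resulting lower-order terms are overwhelmed by $v_n$ for large $n$. Once this is settled, the Bernstein step is essentially routine.
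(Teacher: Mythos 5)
Your proposal is correct and follows essentially the same blueprint as the paper's proof: invert the threshold $\{\ell_{i,w_-}>\delta_n\}$ into a threshold on $\abs{X_i}$, use the signal strength assumption to bound the per-coordinate miss probability by $\rho_n$, then apply Bernstein to a sum of independent indicators over $S_0$. The bookkeeping differs slightly. The paper avoids computing $\chi_n$ precisely by using only the crude (but exact, deterministic) bound $w_-\geq s_n/n$ built into the definition of $\hat w$, showing $\xi(w_-/(2\log(n/s_n)))\leq\sqrt{2\log(n/s_n)+u_n}$ with $u_n=5\log\log(n/s_n)$; it then introduces an intermediate set $S_2$ so that $K_n\leq\abs{S_0\setminus S_2}\leq N:=\#\{i\in S_0 : \abs{\eps_i}>v_n/2\}$, an i.i.d.\ binomial to which Bernstein applies cleanly. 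You instead invert $\ell(\cdot;w_-)$ at $\delta_n$ directly via the two-sided asymptotics $w_-\asymp(s_n/n)(\log(n/s_n))^{1/2}$ from \cref{lem:ExistsW+-} and apply Bernstein to the non-identically-distributed sum $K_n=\sum Y_i$ with the explicit case split $\rho_n\lessgtr\nu_n$ — this is fine, and in fact makes the role of $\alpha$ more transparent than the paper's presentation (where the Bernstein deviation parameter appears to have a typo). One thing you should make explicit is that the $\asymp$ constants in \cref{lem:ExistsW+-} — and therefore the $O(1)$ in your formula for $\chi_n^2$ — are uniform over $\theta_0\in\ell_0(s_n;v_n)$, since $w_-$ depends on $\theta_0$; this is true (it follows from the uniform bounds on $m_1,\tilde m$ in \cref{lem:m1boundsAnyBoundary}) but is what makes "for $n$ large" uniform over the class, and the paper sidesteps the point by using the $\theta_0$-free lower bound $w_-\geq s_n/n$.
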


\begin{lemma}\label{lem:VLambdaWConcentrates}
In the setting of \cref{thm:FDRConvergenceNoRate},  define $V_{\lambda,w}$ as in \cref{eqn:def:VLambdaW}. Then
\begin{align*}
	P_{\theta_0}(\abs{V_{\lambda_+,w_+}- E[V_{\lambda_+,w_+}]}>as_n\nu_n)= o(\nu_n),
\end{align*}	
for some constant $a=a(t)$. The same holds upon replacing one or both of $\lambda_+$ and $w_+$ respectively with $\lambda_-$ and $w_-$.
\end{lemma}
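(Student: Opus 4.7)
The plan is to recognise $V_{\lambda_+,w_+}$ as a Binomial-distributed sum of independent Bernoulli indicators, to control its expectation at order $s_n$, and then to conclude by a standard Bernstein tail bound (\cref{lem:Bernstein}). To start, under $P_{\theta_0}$ the variables $X_i$ for $i\notin S_0$ are iid $\cN(0,1)$, and $\ell_{i,w_+}(X)=\ell(X_i;w_+)$ depends only on $X_i$, since $\lambda_+$ and $w_+$ are deterministic. Hence the $n-s_n$ indicators $\II\braces{\ell(X_i;w_+)<\lambda_+}$ are iid Bernoulli with parameter $p:=F_{w_+}(\lambda_+)$, and $V_{\lambda_+,w_+}\sim\text{Binomial}(n-s_n,p)$.

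Next I would bound $E_{\theta_0} V_{\lambda_+,w_+}=(n-s_n)F_{w_+}(\lambda_+)$ at order $s_n$. By monotonicity of $F_w(\lambda)$ in $w$ (\cref{lem:monotonicity}) we have $F_{w_+}(\lambda_+)\geq F_{w_-}(\lambda_+)$, while \cref{lem:EVw+closetoEVw-} gives a reverse inequality $(n-s_n)F_{w_+}(\lambda_+)\leq (1+B\max(\nu_n,\rho_n,\delta_n))(n-s_n)F_{w_-}(\lambda_+)$. Combining with the bound $(n-s_n)F_{w_-}(\lambda_+)\leq \tfrac{t}{1-t}s_n(1+D\max(\eps_n,\nu_n))$ from \cref{eqn:Flambda+geq}, I obtain $E_{\theta_0} V_{\lambda_+,w_+}\leq C(t)s_n$.

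The centred indicators are bounded by $1$ in absolute value and each have variance at most $p\leq C(t)s_n/(n-s_n)$, so the sum has variance at most $C(t)s_n$. Bernstein's inequality (\cref{lem:Bernstein}) therefore gives, for some $c=c(t)>0$ and all $a>0$ fixed (the linear term in Bernstein's denominator is dominated by the variance since $\nu_n\to 0$),
\begin{equation*}
	P_{\theta_0}\brackets[\big]{\abs{V_{\lambda_+,w_+}-E_{\theta_0} V_{\lambda_+,w_+}}>as_n\nu_n}\leq 2\exp\brackets[\big]{-ca^2 s_n\nu_n^2}.
\end{equation*}
Substituting $\nu_n^2=\alpha^2(\log s_n)/s_n$ this equals $2s_n^{-ca^2\alpha^2}$, which is $o(\nu_n)=o(s_n^{-1/2}(\log s_n)^{1/2})$ once $a=a(t)$ is chosen large enough (alternatively, once $\alpha$ is enlarged, which is permitted since \cref{lem:HatWConcentrates,lem:HatLambdaConcentrates,lem:lvalsSmallForTrueSignals} only require $\alpha$ to be bounded below).

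Finally, the identical argument handles the three other combinations $(\lambda_-,w_-)$, $(\lambda_-,w_+)$, $(\lambda_+,w_-)$: in each case $E_{\theta_0} V_{\lambda,w}=(n-s_n)F_w(\lambda)=O(s_n)$ by combining \cref{lem:ExistsLambda+-} with \cref{lem:EVw+closetoEVw-}, so the variance is again $O(s_n)$ and the Bernstein step is verbatim. The only substantive point is the upper bound on $E_{\theta_0} V_{\lambda,w}$, which is supplied off-the-shelf by the earlier lemmas; I expect no real obstacle beyond tracking constants.
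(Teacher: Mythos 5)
Your proof is correct and follows the same overall strategy as the paper's: recognise $V_{\lambda,w}$ as a Binomial$(n-s_n,F_w(\lambda))$ sum, show $E[V_{\lambda,w}]=O(s_n)$, and conclude with Bernstein. The Bernstein step is identical and your bookkeeping with $\nu_n^2 s_n=\alpha^2\log s_n$ is the same.

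The one place you diverge is in establishing $E[V_{\lambda,w}]=O(s_n)$. The paper goes directly via \cref{lem:AsymptoticsOfFwLambda}: since $w\asymp (s_n/n)(\log(n/s_n))^{1/2}$ (\cref{lem:ExistsW+-}) and $1-\lambda\asymp\delta_n$ (\cref{lem:ExistsLambda+-}), the asymptotic formula $F_w(\lambda)\asymp w(1-\lambda)^{-1}(\log(1/w))^{-3/2}$ gives $E[V_{\lambda,w}]\asymp s_n$ for all four combinations in one stroke, in fact with matching upper and lower bounds. You instead assemble the upper bound from \cref{lem:EVw+closetoEVw-} and the bounds \cref{eqn:Flambda+geq,eqn:Flambda-geq} of \cref{lem:ExistsLambda+-}, which is valid (there is no circularity: \cref{lem:EVw+closetoEVw-}'s proof is independent of \cref{lem:VLambdaWConcentrates}) but a bit roundabout, since \cref{lem:EVw+closetoEVw-} and \cref{eqn:Flambda+geq} are themselves proved by applying \cref{lem:AsymptoticsOfFwLambda}. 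Also, for the combination $(\lambda_-,w_+)$ you would need to chain monotonicity in $\lambda$ with \cref{lem:EVw+closetoEVw-} (or with monotonicity in $w$), since \cref{eqn:Flambda-geq} is a lower bound; this is easy but not quite what your last paragraph says. In short, your argument works, but invoking \cref{lem:AsymptoticsOfFwLambda} directly, as the paper does, is shorter and covers all four cases uniformly.
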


\begin{lemma}\label{lem:EVw+closetoEVw-}
	In the setting of \cref{thm:FDRConvergenceNoRate}, recall the definitions \cref{eqn:def:VLambdaW,eqn:def:w+,eqn:def:w-,eqn:def:lambda+,eqn:def:lambda-} of $V_{\lambda,w},$ $w_\pm,$ and $\lambda_\pm$.
	Then for some constant $B>0$,
	\begin{align} \label{eqn:EV++} EV_{\lambda_+,w_+}&\leq E V_{\lambda_+,w_-}\brackets[\Big]{1+B\max(\nu_n,\rho_n,\delta_n)}, \\ \label{eqn:EV--}
		E V_{\lambda_-,w_-} &\geq E V_{\lambda_-,w_+} \brackets[\Big]{1-B\max(\nu_n,\rho_n,\delta_n)}.\end{align}
\end{lemma}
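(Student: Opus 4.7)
I first reduce both inequalities to bounding a ratio of null tail probabilities. Since $V_{\lambda,w}=\sum_{i\notin S_0}\II\braces{\ell_{i,w}(X)<\lambda}$ depends only on the coordinates $X_i$, $i\notin S_0$, which are iid $\cN(0,1)$, we have $EV_{\lambda,w}=(n-s_n)F_w(\lambda)$ with $F_w$ as in \cref{eqn:def:Fw}. Hence \cref{eqn:EV++,eqn:EV--} amount to showing
\[ F_{w_+}(\lambda)/F_{w_-}(\lambda)\;\leq\; 1 + B\max(\nu_n,\rho_n,\delta_n),\qquad \lambda\in\{\lambda_-,\lambda_+\}, \]
the matching lower bound of $1$ being immediate from monotonicity of $\ell$-values in $w$ (\cref{lem:monotonicity}).

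Second, I obtain a tractable asymptotic for $F_w(\lambda)$. From $\ell(x;w)=(1-w)/(1+w\beta(x))$ we have $\{\ell(x;w)<\lambda\}=\{\beta(x)>\zeta_w(\lambda)\}$ with $\zeta_w(\lambda):=(1-w-\lambda)/(\lambda w)$, so $F_w(\lambda)=2\bar\Phi(x^\ast(w,\lambda))$ where $\beta(x^\ast)=\zeta_w(\lambda)$. The quasi-Cauchy form \cref{eqn:def:gInQuasiCauchy} gives $\beta(x)\sim x^{-2}e^{x^2/2}$ as $|x|\to\infty$; combined with the Mills ratio $\bar\Phi(x)=(\phi(x)/x)(1+O(x^{-2}))$, a direct computation yields, for $w$ small,
\[ F_w(\lambda)\;=\;c\,\frac{w}{(1-\lambda)\brackets{\log((1-\lambda)/w)}^{3/2}}\brackets{1+O(\delta_n)}\]
for some constant $c>0$; here the estimates $w_\pm\asymp(s_n/n)\sqrt{\log(n/s_n)}$ and $1-\lambda_\pm\asymp\delta_n$ from \cref{lem:ExistsW+-,lem:ExistsLambda+-} guarantee $w_\pm\ll 1-\lambda_\pm$ and $x^{\ast 2}\sim 2\log(1/w)\asymp\log(n/s_n)$, which justifies the $O(\delta_n)$ remainder. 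Taking the ratio at $w_+$ and $w_-$, and noting that the $\log$-factor and the $-1/\lambda$ correction in $\zeta_w$ contribute only $1+o(\delta_n)$ in the relevant regime, one obtains
\[ F_{w_+}(\lambda)/F_{w_-}(\lambda)\;=\;(w_+/w_-)\brackets{1+O(\delta_n)}. \]

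Third, I bound $w_+/w_-\leq 1+O(\nu_n+\rho_n)$ from the defining equations \cref{eqn:def:w-,eqn:def:w+}. Introducing $\Psi(w):=\sum_{i\in S_0}m_1(\theta_{0,i},w)-(n-s_n)\tilde m(w)$, subtraction and the mean value theorem yield $w_+-w_-=\nu_n(n-s_n)\brackets{\tilde m(w_-)+\tilde m(w_+)}/|\Psi'(\tilde w)|$ for some $\tilde w\in(w_-,w_+)$. Under the strong signal assumption one has $m_1(\theta_{0,i},w)=w^{-1}(1+O(\rho_n))$ and $\partial_w m_1(\theta_{0,i},w)=-w^{-2}(1+O(\rho_n))$ (this is where the $\rho_n$ term originates), while $\tilde m(w)\asymp(\log(1/w))^{-1/2}$ and $\tilde m'(w)\asymp w^{-1}(\log(1/w))^{-3/2}$ follow from direct computation with \cref{eqn:def:gInQuasiCauchy} (cf.\ \cite{JS04,CR18}). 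A direct calculation gives $|\Psi'(\tilde w)|\asymp s_n/w_-^2$: the $\sum\partial_w m_1$ piece dominates $\tilde m'$ by a factor of $\log(n/s_n)$, and both have the same (negative) sign so there is no cancellation. Combined with $(n-s_n)\tilde m(w_-)\asymp n/\sqrt{\log(n/s_n)}$ this yields $w_+-w_-\lesssim(\nu_n+\rho_n)w_-$, as required, proving \cref{eqn:EV++}. The symmetric inequality \cref{eqn:EV--} follows from the same argument applied at $\lambda_-$.

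The main obstacle is the final steepness estimate on $|\Psi'|$: it requires both verifying that the two (same-sign) contributions to $\Psi'$ do not merely cancel at the leading order and carefully tracking the $O(\rho_n)$ multiplicative errors from the strong-signal approximations of $m_1$ and its $w$-derivative, refining the fixed-point calculations that underlie \cref{lem:ExistsW+-}.
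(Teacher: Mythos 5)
Your reduction to bounding $F_{w_+}(\lambda)/F_{w_-}(\lambda)$, using $EV_{\lambda,w}=(n-s_n)F_w(\lambda)=2(n-s_n)\bar\Phi(\xi(r(w,\lambda)))$ and monotonicity for the lower bound $\geq 1$, matches the paper's. The two substantive ingredients, however, are handled differently, and each as you present it has a gap.

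First, the intermediate asymptotic $F_w(\lambda)=c\,w(1-\lambda)^{-1}\brackets{\log((1-\lambda)/w)}^{-3/2}\brackets{1+O(\delta_n)}$ overstates the precision. From \cref{lem:ResultsFromCR18} one has $\xi(u)^2=2\log(1/u)+2\log\log(1/u)+O(1)$, hence $\xi(r(w,\lambda))^{-3}=\brackets{2\log((1-\lambda)/w)}^{-3/2}\brackets{1+O(\eps_n)}$, and $\eps_n\gg\delta_n$. Taken at face value this formula only yields $F_{w_+}(\lambda)/F_{w_-}(\lambda)=1+O(\eps_n)$, which is too weak for the lemma. The result is rescued by the fact that the two $O(\eps_n)$-corrections (at $w_+$ and at $w_-$) nearly cancel, but the asymptotic formula alone does not record this correlation; you must compare $\xi(r(w_+,\lambda))$ and $\xi(r(w_-,\lambda))$ directly. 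This is exactly what the paper's proof does: it shows $\xi_-^2-\xi_+^2=O(1)$ (\cref{lem:xi+-differencebounded}), deduces $\xi_-/\xi_+-1=O(\delta_n)$, and then factors $\bar\Phi(\xi_+)/\bar\Phi(\xi_-)$ into the explicitly controlled pieces $(1+\xi_-^2)/\xi_-^2$, $\xi_-/\xi_+$, and $\phi(\xi_+)/\phi(\xi_-)=r_+g(\xi_+)/(r_-g(\xi_-))$, the last handled via \cref{lem:w+/w-to1}.

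Second, the paper obtains $w_+/w_--1=O(\max(\nu_n,\rho_n))$ (\cref{lem:w+/w-to1}) without touching $\Psi'$: it sandwiches $m_1(\theta_{0,i},w)$ between $(1-\rho_n)/w$ and $1/w$ via \cref{lem:m1boundsAnyBoundary} and takes the quotient of the two defining equations of $w_\pm$. Your mean value theorem route is a genuinely different (and, if carried through, slightly sharper $O(\nu_n)$) argument, but it hinges on a \emph{lower} bound $|\partial_w m_1(\theta_{0,i},\tilde w)|\gtrsim\tilde w^{-2}$, equivalently $m_2(\theta_{0,i},\tilde w)\gtrsim\tilde w^{-2}$, uniformly over $\tilde w\in(w_-,w_+)$. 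The paper's auxiliary results provide only the upper bound $m_2(\tau,w)\leq Cm_1(\tau,w)/w$ (\cref{lem:ResultsFromCR18}). A lower bound is plausible by an argument parallel to the proof of \cref{lem:m1boundsAnyBoundary}, but it must be supplied; as you yourself flag, until it is, the steepness estimate on $\Psi'$ and hence the $w_+/w_-$ control are not established.
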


\subsection{Proofs% of Core Lemmas
}\label{sec-core-proofs}
We here define two final quantities which appear in the proofs, closely related to $\chi$ as defined in \cref{eqn:def:chi}: recalling the definition $\beta(x)=(g/\phi)(x)-1$ from \cref{eqn:def:beta}, we set
\begin{align}
	\label{eqn:def:xi} \xi(x)&= (\phi/g)^{-1}(x),~x\in(0,(\phi/g)(0)]\\
	\label{eqn:def:zeta} \zeta(w)&=\beta^{-1}(1/w),~~ w\in (0,1].
\end{align} 
Note the relationship 
\begin{equation}
	\label{eqn:zeta-xi-relationship}
\zeta(w)=\xi(w/(1+w)).
\end{equation}

\begin{proof}[Proof of \cref{lem:monotonicity}] Strict monotonicity in $w$ of $\ell_{i,w}$, $q_{i,w}$ is immediate from the definitions \cref{eqn:def:Lvals,eqn:def:qvals}: for example,
	\[ \ell_{i,w}= \frac{(1-w)\phi(X_i)}{(1-w)\phi(X_i)+wg(X_i)} = \frac{1}{1+(w/(1-w))(g/\phi)(X_i)}\] decreases as $w$ increases because $(g/\phi)(X_i)> 0$. Non-strict monotonicity in $w$ of $V_{\lambda,w}$, $V'_{w}$, $\postFDR_{w}(\vphi)$ follows immediately. The monotonicity of $V_{\lambda,w}$ in $\lambda$ is also clear (and note that $\ell_{i,w}<1$ for $w\in(0,1)$ so that $(\vphi_{1,w})_i=1$ for all $i$). 	To see that \[ \postFDR_{u}(\vphi_{\lambda,w_2})\geq \postFDR_{u}(\vphi_{\lambda,w_1}) \quad \text{if} \quad w_2\geq w_1,\] note that changing $w$ does not change the ordering of the $\ell_{i,w}$ values, only their magnitudes, since smaller $\ell$ values correspond to larger values of $\abs{X_i}$. It follows that $\ell_{i,w_1}$ and $\ell_{i,w_2}$ both select coordinates $i$ in order of increasing $\ell_{i,u}$ values. Since the $\ell_{i,w}$ are monotonic in $w$, we see that $\vphi_{\lambda,w_2}$ selects every $i$ selected by $\vphi_{\lambda,w_1}$, so that $\postFDR_u(\vphi_{\lambda,w_2})$, which can be viewed as the average of the selected $\ell_{i,u}$ values (cf.\ \cref{eqn:def:HatK}), is no smaller than $\postFDR_u(\vphi_{\lambda,w_1})$.

	Strict decreasingness of $\phi/g$ is immediate from the definition \cref{eqn:def:gInQuasiCauchy},  and implies the same of $\bar{\Phi}/\bar{G}$ (see \cite[Lemma S-9]{CR18}). In view of the explicit expression for $F_w$ in \cref{lem:AsymptoticsOfFwLambda}, its strict monotonicity follows from that of $\bar{\Phi}$, $\xi=(\phi/g)^{-1}$ and $r(w,\lambda)=w\lambda(1-w)^{-1}(1-\lambda)^{-1}$. Similarly the score function $S(w)=L'(w)$ defined in \cref{eqn:def:Score} can be seen, by differentiating, to be strictly decreasing on the event where there exists $i$ such that $\beta(X_i)\neq 0$, which has probability 1 because $\beta(x)=(g/\phi)(x)-1$ is strictly increasing and the $X_i$'s have non-atomic distributions.

	For monotonicity of $E_{\theta_0=0}[\ell_{1,w} \mid \ell_{1,w'} <\lambda]$ in $\lambda$, first note that, writing $\xi_w(\lambda)=\xi(r(w,\lambda))$, a direct calculation yields  
	\[ \braces{\ell_{i,w}<\lambda}=\braces{\abs{X_i}>\xi_w(\lambda)}.\]  It follows that 
	\[ \braces{\ell_{1,w'}<\lambda} = \braces{\ell_{1,w}<\xi_{w}^{-1}\circ \xi_{w'}(\lambda)},\] and hence that we can express the expectation as
	\[ E_{\theta_0=0} [\ell_{1,w} \mid \ell_{1,w'}<\lambda] = Z\circ \xi_{w}^{-1}\circ \xi_{w'}(\lambda),\quad  Z(x)= E_{\theta_0=0} [ \ell_{1,w} \mid \ell_{1,w}<x]. \] 
	 It suffices, since $\xi_{w}$ is decreasing, to note that $Z$ is increasing, which is intuitively clear and formally follows from the following calculations: writing $U=\ell_{1,w}$, for $b>a$ we have
	\begin{multline} E[ U \mid U<b]
	= E[ U \mid  U<a] \Pr(U<a \mid  U<b) \\ + E[ U \mid a\leq U<b] \Pr(U\geq a \mid U<b)\end{multline}
	Then, since  $E[ U \mid  a\leq U<b] \geq a \geq E[U \mid U<a],$ we deduce that
	\begin{align*} E[U\mid U<b] \geq& E[U\mid U<a] \brackets{\Pr(U<a \mid  U<b) + \Pr(U\geq a \mid U<b)} \\ &= E[U\mid  U<a].\qedhere\end{align*}
\end{proof}

\begin{proof}[Proof of \cref{lem:ExistsW+-}]
The essence of the proof is that $m_1(\theta_{0,i},w)\asymp 1/w$ and $\tilde{m}(w)\asymp (\log(1/w))^{-1/2}$, so that necessarily if \cref{eqn:def:w-} or \cref{eqn:def:w+} is satisfied by some $w$ we have $s_n/w \asymp (n-s_n)(\log(1/w))^{-1/2}$ and hence $w\asymp (s_n/n)(\log(n/s_n))^{1/2}$. 

 To make this precise, we begin by claiming that, for some constant $C>0$,
	\begin{align} 
		\label{eqn:sn<w-}\sum_{i \in S_0} m_1(\theta_{0,i},s_n/n)&>(1+ \nu_n)(n-s_n)\tilde{m}(s_n/n) \\
		\label{eqn:snlogsn>w+}\sum_{i \in S_0} m_1\brackets[\big]{\theta_{0,i},C\tfrac{s_n}{n}\brackets[\big]{\log( \tfrac{n}{s_n})}^{1/2}}&<(1- \nu_n)(n-s_n)\tilde{m}\brackets[\big]{C\tfrac{s_n}{n}\brackets[\big]{\log( \tfrac{n}{s_n})}^{1/2}},
	\end{align}
	at least for large enough $n$.
	Existence of $w_\pm$ satisfying, initially, $s_n/n\leq w_-\leq w_+\lesssim (s_n/n)(\log (n/s_n))^{1/2}$ then follows from the intermediate value theorem, since $\tilde{m}$ is continuous, increasing and non-negative and $m_1(\tau,\cdot)$ is continuous and decreasing for each fixed $\tau$ (see \cref{lem:ResultsFromCR18}).
	
To prove the claim, note that asymptotically as $w\to 0$ with $w\geq s_n/n$, by \cref{lem:m1boundsAnyBoundary} we have for some $c,c'>0$
	\begin{alignat*}{3} c(\log (1/w))^{-1/2}&\leq \tilde{m}(w) &&\leq c' (\log (1/w))^{-1/2}, \\
		1/(2w) &\leq m_1(\theta_{0,i},w)&&\leq 1/w.
	\end{alignat*}
	It follows that the left side of \cref{eqn:sn<w-} is of order $n$, while the right side is of the smaller order $n(\log(n/s_n))^{-1/2}$. It also follows that \begin{align*} \sum_{i\in S_0} m_1(\theta_{0,i},C(s_n/n) (\log (n/s_n))^{1/2}) &\leq C^{-1}n (\log (n/s_n))^{-1/2},\\ 
		(1-\nu_n)(n-s_n)\tilde{m}(C(s_n/n)(\log (n/s_n))^{1/2})&\gtrsim (n-s_n) (\log (n/s_n))^{-1/2}\end{align*} for $n$ large, where the suppressed constant does not depend on $C$ (or $\alpha$), %it does depend on $\alpha,C$ in the sense of needing $n>N(\alpha)$, 
	so that the right side of \cref{eqn:snlogsn>w+} upper bounds the left for $C$ large enough, as claimed.
	
	To prove the sharper asymptotics, observe by definition that for $w\in \braces{w_-,w_+}$ we have \[ \sum_{i \in S_0} m_1(\theta_{0,i},w)=(1\pm \nu_n)(n-s_n)\tilde{m}(w). \] Since $s_n/n\leq w\leq (s_n/n) (\log (n/s_n))^{1/2}$ we may use the bounds on $m_1$ given above to see that the left side is  $\asymp s_n w^{-1}$. We also note that $\log(1/w)\asymp \log (n/s_n)$, so that the bounds on $\tilde{m}$ given above yield $\tilde{m}(w)\asymp (\log(n/s_n))^{-1/2}$. The result follows, noting also that $s_n/n \to 0$ so $n-s_n \asymp n$. 
\end{proof}

\begin{proof}[Proof of \cref{lem:ExistsLambda+-}]
	We prove the results for $\lambda_+$; the proofs for $\lambda_-$ are almost identical. We begin by showing that any solution to \cref{eqn:def:lambda+} is necessarily unique.
	Indeed, since the right side is positive, any solution necessarily lies in the set
	\[ \braces{\lambda : E_{\theta_0=0} [ \ell_{1,w_+} \mid \ell_{1,w_-}<\lambda]> t}.\]
	On this set, since $\lambda\mapsto E_{\theta_0=0} [ \ell_{1,w_+} \mid \ell_{1,w_-}<\lambda]-t$ is a non-decreasing positive function and $F_{w_-}$ is a strictly increasing non-negative function (see \cref{lem:monotonicity}), the left side of \cref{eqn:def:lambda+} is strictly increasing, yielding the claimed uniqueness of any solution.
	
From here, \cref{lem:AsymptoticsOfFwLambda,lem:ExpectationsSlowlyTo1} yield precise bounds on $F_w(t)$ and $E_{\theta_0=0}[\ell_{1,w} \mid \ell_{1,w'}<t]$ for suitable $w,w',t$ which allow us to conclude the result. Precisely, \cref{lem:ExistsW+-} tells us that $w_-\asymp (s_n/n)(\log (n/s_n))^{1/2}$, so that $\log (1/w_-)\asymp \log (n/s_n)$ and $w_-^{1/2}/\delta_n\to 0$, hence by \cref{lem:AsymptoticsOfFwLambda} (with $c=1/2$) we have for any constant $\kappa>0$ 
\[ F_{w_-}(1-\kappa \delta_n) \asymp \kappa^{-1}\delta_n^{-1} w_- (\log (1/w_-))^{-3/2}\asymp \kappa^{-1} n^{-1} s_n.\] 
[All suppressed constants in this proof will be independent of $\kappa$.]
	Similarly, by \cref{lem:ExpectationsSlowlyTo1} we have \begin{equation}\label{eqn:1-ErhoAsymptotics} 1 - E_{\theta_0=0}[ \ell_{1,w_+} \mid \ell_{1,w_-}<1-\kappa \delta_n] \asymp \kappa  \delta_n\log(1/\delta_n)=\kappa \eps_n.\end{equation} 
Inserting these bounds we see that the left side of \cref{eqn:def:lambda+} is bounded above and below by a constant times
		\[ \kappa ^{-1}(n-s_n)n^{-1} s_n(1-t-O(\kappa \eps_n)) \asymp  \kappa^{-1}  s_n.\]  
	For $\kappa$ large enough (depending on $t$) this is smaller than the right side of \cref{eqn:def:lambda+} and for $\kappa$ small it is larger. The left side is continuous in $\lambda_+$ (see \cref{lem:AsymptoticsOfFwLambda,lem:ExpectationsSlowlyTo1}) while the right side is fixed, so we deduce by the intermediate value theorem the existence of a solution $\lambda_+$ satisfying  $1-C\delta_n\leq \lambda_+\leq 1-c\delta_n$ for constants $C,c>0$, so that \cref{eqn:Lambda+-To1} is proved.
		
	The expectation result \cref{eqn:Elambda+to1} now follows immediately from \cref{eqn:1-ErhoAsymptotics}. The bound \cref{eqn:Flambda+geq} for $F_{w_-}(\lambda_+)$ is obtained by rearranging the definition \cref{eqn:def:lambda+}, inserting the bound for $E_{\theta_0=0}[\ell_{1,w_+} \mid \ell_{1,w_-}\leq \lambda_+]$, and using that $(1-x)^{-1}=1+O(x)$ as $x\to 0$. [For the bound on $F_{w_+}(\lambda_-)$, one also recalls that $\delta_n=o(\eps_n)$.]
	
	Finally, to see that $\lambda_-< \lambda_+$, observe that $\ell_{1,w_-}>\ell_{1,w_+}$ (\cref{lem:monotonicity}) so that for $\lambda>t$, 
	\[\begin{split} & E_{\theta_0=0}[(\ell_{1,w_-}-t)\II\braces{\ell_{1,w_+}<\lambda}]-E_{\theta_0=0}[(\ell_{1,w_+}-t)\II\braces{\ell_{1,w_-}<\lambda}] \\ = & E_{\theta_0=0}[(\ell_{1,w_-}-\ell_{1,w_+})\II\braces{\ell_{1,w_-}<\lambda}] + E_{\theta_0=0}[(\ell_{1,w_-}-t)\II\braces{\ell_{1,w_+}<\lambda\leq \ell_{1,w_-}}]\\ \geq &0.\end{split} \]
	Since \cref{eqn:Lambda+-To1} shows that $\lambda_->t$ for $n$ large, we apply this with $\lambda=\lambda_-$ to deduce that the left side of \cref{eqn:def:lambda+} evaluated at $\lambda_-$ is smaller than its right side:
	\[ \begin{split} &F_{w_-}(\lambda_-)(E_{\theta_0=0}[\ell_{1,w_+} \mid \ell_{1,w_-}<\lambda_-]-t) \\ =& E_{\theta_0=0}[(\ell_{1,w_+}-t) \II\braces{\ell_{1,w_-}<\lambda_-}] \\ \leq & E_{\theta_0=0}[(\ell_{1,w_-}-t)\II\braces{\ell_{1,w_+}<\lambda_-}] \\ & =F_{w_+}(\lambda_-)(E_{\theta_0=0}[\ell_{1,w_-}\mid \ell_{1,w_+}<\lambda_-]-t) \\ & =\frac{ts_n -As_n\max(\nu_n,\rho_n,\delta_n)}{n-s_n}  \\ &< \frac{ts_n+As_n\nu_n}{n-s_n}.\end{split}\]
	Since the right side of \cref{eqn:def:lambda+} is constant and the left side increases with $\lambda_+$ (as noted above when showing uniqueness), this implies that $\lambda_+>\lambda_-$.
\end{proof}

\begin{proof}[Proof of \cref{lem:HatWConcentrates}]
	
	We follow the proof of Lemmas~S-3 in \cite{CR18}, with the essential difference that we do not use the polynomial sparsity but rather the strong signal assumption. Let us prove
\[	P_{\theta_0}( \hat{w}< w_-)  =o(\nu_n)	\] for $\alpha$ large enough in the definition of $\nu_n$, the proof that $P_{\theta_0}( \hat{w}> w_+)  =o(\nu_n)$ being similar.
	
Let $S=L'$ denote the score function as in \cref{eqn:def:Score}. Since $\hat{w}$ maximises $L(w)$, necessarily $S(\hat{w})\leq 0$ or $\hat{w}=1$. If $\hat{w}<w_-$ then only the former may hold, so that applying the strictly monotonic function $S$ (\cref{lem:monotonicity}) we deduce that $\braces{\hat{w}<w_-}=\braces{S(w_-)<S(\hat{w})}\subseteq\braces{S(w_-)<0}$, hence
	\begin{align*}
		P_{\theta_0}( \hat{w}< w_-) \leq  P_{\theta_0}( S(w_-)<0)	&=P_{\theta_0}( S(w_-)-E_{\theta_0}S(w_-) <-E_{\theta_0}S(w_-))\\
		&=P_{\theta_0}\left( \sum_{i=1}^n W_i <-E\right),
	\end{align*}
	where we have introduced the notation %\beta2
	 $W_i=\beta(X_i)/(1+w_-\beta(X_i))-m_1(\theta_{0,i},w_-)$ and $E=E_{\theta_0}S(w_-)=\sum_{i=1}^n m_1(\theta_{0,i},w_-)$. For $n$ large $|W_i|\leq \mathcal{M}=2/w_-$ a.s. (see \cref{lem:ResultsFromCR18}), so that we may scale the variables $W_i$ to apply the Bernstein inequality (\cref{lem:Bernstein}) and obtain
	\begin{align*}
		P_{\theta_0}( \hat{w}< w_-) \leq e^{-0.5 E^2/(V_2 + \mathcal{M}E/3)},
	\end{align*}
	where $V_2=\sum_{i=1}^n \Var(W_i)\leq \sum_{i=1}^n m_2(\theta_{0,i},w_-)$, for $m_2(\theta_{0,i},w)=E_{\theta_0} (\beta(X_i)/[1+w\beta(X_i)])^2 $. %\beta2
	In view of the definition \eqref{eqn:def:w-} of $w_-$, we have
	\[
	E= \sum_{i\in S_0} m_1(\theta_{0,i},w_-)- (n-s_n)\tilde{m}(w_-) = \nu_n(n-s_n) \tilde{m}(w_-).
	\]
	We also note, using the strong signal assumption and the bounds on $m_2$ in \cref{lem:ResultsFromCR18} that for some constants $C,M_0>0$ and $n$ larger than some universal threshold,
	\begin{align*}
		V_2
		&\leq \sum_{i:|\theta_{0,i}|>M_0}  m_2(\theta_{0,i},w_-)+\sum_{i:\theta_{0,i}=0}  m_2(0,w_-)\\
		&\leq \frac{C}{w_-}\sum_{i\in S_0} m_1(\theta_{0,i},w_-) + C(n-s_n)\frac{\bar{\Phi}(\zeta(w_-))}{w_-^2},
	\end{align*}
with $\zeta$ defined as in \cref{eqn:def:zeta}. By a standard normal tail bound and the definition of $\zeta$, we have $\bar{\Phi}(\zeta(w_-))\asymp \phi(\zeta(w_-))/\zeta(w_-) \asymp w_- g(\zeta(w_-)) /\zeta(w_-)$, which is of order $w_- \tilde{m}(w_-)/ \zeta(w_-)^2$ because $\tilde{m}(w_-)\asymp \zeta(w_-) g(\zeta(w_-))$ (see \cref{lem:ResultsFromCR18}). 
Using the latter, and the fact that $\zeta(w_-)\to \infty$ (\cref{lem:ResultsFromCR18}), in combination with \eqref{eqn:def:w-} gives
	\[
	V_2\lesssim  n w_-^{-1}  \tilde{m}(w_-) + n w_-^{-1} \tilde{m}(w_-)/ \zeta(w_-)^2 \lesssim  n w_-^{-1}  \tilde{m}(w_-),
	\]
 so that 
	\[
	\frac{V_2+ \mathcal{M}E/3}{E^2}\lesssim  \frac{n w_-^{-1}  \tilde{m}(w_-)}{ (\nu_n(n-s_n) \tilde{m}(w_-))^2} + \frac{1}{ w_- \nu_n(n-s_n) \tilde{m}(w_-)}\lesssim  \frac{1}{ \nu^2_n n w_-\tilde{m}(w_-)} .
	\]
		This implies that
	$
	P_{\theta_0}( \hat{w}< w_-) \leq e^{-c \nu^2_n n w_-\tilde{m}(w_-)}
	$
	for some constant $c>0$.  
	Now, by Lemma~\ref{lem:ExistsW+-}, we have $n w_-\tilde{m}(w_-)\asymp s_n$. Hence, recalling the definition $\nu_n=\alpha s_n^{-1/2}(\log s_n)^{1/2}$ from eq.~\eqref{eqn:def:nu}, we deduce that $\nu^2_n n w_-\tilde{m}(w_-)\geq \log s_n$ if the constant $\alpha$ is large enough, and hence the above probability is bounded above by $s_n^{-1/2}=o(\nu_n)$. 
\end{proof}

\begin{proof}[Proof of \cref{lem:HatLambdaConcentrates}]
	Let $\Bb$ be an event on which, with $K_n:=\#\braces{i\in S_0 : \ell_{i,w_-}>\delta_n}$,
	\begin{equation}\label{eqn:eventB}\begin{split} \hat{w}&\in(w_-,w_+),\\ 
			K_n&\leq s_n (\rho_n+\nu_n), \\
			V_{\lambda_+,w_-}&\leq E[V_{\lambda_+,w_-}]+as_n\nu_n,  \\
			V_{\lambda_-,w_+}&\geq E[V_{\lambda_-,w_+}]-a%(t/(1-t))^{1/2}
			s_n\nu_n,
		\end{split}
	\end{equation}
and whose complement has probability $P_{\theta_0}(\Bb^c)=o(\nu_n)$;
 note that such an event exists by \cref{lem:HatWConcentrates,lem:lvalsSmallForTrueSignals,lem:VLambdaWConcentrates}, the proofs of which are independent of \cref{lem:HatLambdaConcentrates}. 
	Recall that $\hat{\lambda}$ is characterised by the posterior FDR: \[\postFDR_{\hat{w}}(\vphi_{\lambda,\hat{w}}):=\frac{\sum_{i=1}^n \ell_{i,\hat{w}} \II\braces{(\vphi_{\lambda,\hat{w}})_i=1}
	}{1\vee (\sum_{i=1}^n \II\braces{(\vphi_{\lambda,\hat{w}})_i=1})
	} \leq t \iff \lambda\leq \hat{\lambda}.\] Thus, it is enough to bound the posterior  FDRs of $\vphi_{\lambda_-,\hat{w}},\vphi_{\lambda_+,\hat{w}}$ above and below respectively by $t$. 	We prove the upper and lower bound separately, which suffices by a union bound. 

	{\em Upper bound, $\postFDR_{\hat{w}}(\vphi_{\lambda_+,\hat{w}})>t$ with probability at least $1-o(\nu_n)$.}
	On the event $\Bb$, monotonicity (see \cref{lem:monotonicity}) allows us to deduce that
	\begin{equation}\label{eqn:postFDRlambda+lowerbound} \begin{split} \postFDR_{\hat{w}}(\vphi_{\lambda_+,\hat{w}}) &\geq \postFDR_{\hat{w}} (\vphi_{\lambda_+,w_-})  \\
			&\geq \postFDR_{w_+}(\vphi_{\lambda_+,w_-}) \\
			&\geq \frac{\sum_{i\not \in S_0} \ell_{i,w_+}\II\braces{\ell_{i,w_-}<\lambda_+}}{s_n+V_{\lambda_+,w_-}},
		\end{split} 
	\end{equation} where to obtain the last line we have used that $\sum_{i\in S_0}\II\braces{\ell_{i,w_-}<\lambda_+}\leq s_n$ and $\sum_{i\in S_0}\ell_{i,w_+}\II\braces{\ell_{i,w_-}<\lambda_+}\geq 0$.
	We apply Bernstein's inequality (see \cref{lem:Bernstein}) with, for some $a$ (indeed, the same $a$ as in \cref{eqn:eventB}, coming originally from \cref{lem:VLambdaWConcentrates}, works),
	\[ u= a s_n\nu_n, \qquad U_i=-\ell_{i,w_+}\II\braces{\ell_{i,w_-}<\lambda_+},~i\not\in S_0.\] 
 Note that 
	\[\begin{split} \sum_{i\not \in S_0} E[U_i]&=-E[V_{\lambda_+,w_-}] E_{\theta_0=0}[\ell_{1,w_+} \mid \ell_{1,w_-} < \lambda_+], \\ \sum_{i\not \in S_0} \Var(U_i)&\leq E V_{\lambda_+,w_-} \asymp %t(1-t)^{-1}
		s_n. \end{split} \] 
	For $a$ large enough we deduce that
	\begin{multline}\label{eqn:BernsteinForUpperBoundNumerator}P_{\theta_0}\brackets[\Big]{\sum_{i\not \in S_0} \ell_{i,w_+} \II\braces{\ell_{i,w_-}<\lambda_+}< \\ E[V_{\lambda_+,w_-}] E_{\theta_0=0} [\ell_{1,w_+} \mid \ell_{1,w_-}<\lambda_+]-a%t^{1/2}(1-t)^{-1/2}
			s_n\nu_n} \leq  s_n^{-1/2}.\end{multline} 
	Then by a union bound we see that on an event $\Cc\subset \Bb$ of probability at least $P(\Bb)-s_n^{-1/2}=1-o(\nu_n)$, the numerator in the final line of \cref{eqn:postFDRlambda+lowerbound} is lower bounded by \begin{equation*}\label{eqn:NumeratorBound}\begin{split}
			&E[V_{\lambda_+,w_-}] E_{\theta_0=0}[\ell_{1,w_+} \mid \ell_{1,w_-} <\lambda_+] -a%t^{1/2}(1-t)^{-1/2}
			s_n\nu_n  \\ =&(n-s_n)F_{w_-}(\lambda_+)E_{\theta_0=0}[\ell_{1,w_+} \mid \ell_{1,w_-}<\lambda_+]-a%t^{1/2}(1-t)^{-1/2}
			s_n\nu_n.\end{split} 
		\end{equation*}
	  Recalling also that on $\Bb$ we have \[ V_{\lambda_+,w_-}\leq E[V_{\lambda_+,w_-}] + %t^{1/2}(1-t)^{-1/2}
	  a s_n\nu_n=(n-s_n)F_{w_-}(\lambda_+)+%t^{1/2}(1-t)^{-1/2}
	  a s_n\nu_n,\] we deduce that
	\[\postFDR_{\hat{w}}(\vphi_{\lambda_+,\hat{w}}) \geq \II_\Cc \frac{ (n-s_n)F_{w_-}(\lambda_+) E_{\theta_0=0}[\ell_{1,w_+} \mid \ell_{1,w_-}<\lambda_+]-%t^{1/2}(1-t)^{-1/2}
		as_n\nu_n}{s_n+(n-s_n)F_{w_-}(\lambda_+) +a %t^{1/2}(1-t)^{-1/2}
		s_n\nu_n}.\] Substituting for the first term in the numerator from the definition \cref{eqn:def:lambda+}, we find that, for $A>(1+t)a$,
	\[\postFDR_{\hat{w}}(\vphi_{\lambda_+,\hat{w}}) \geq \II_\Cc \brackets[\Big]{t + \frac{%(1-t)t^{1/2}(1-t)^{-1/2}
			(A-(1+t)a)s_n\nu_n}{s_n+(n-s_n)F_{w_-}(\lambda_+)+a s_n \nu_n}} >t\II_\Cc,\] so that indeed $\hat{\lambda}\leq \lambda_+$, at least for $n$ large enough, on the event $\Cc$.
	
	{\em Lower bound, $\postFDR_{\hat{w}}(\vphi_{\lambda_-,\hat{w}})\leq t$ with probability at least $1-o(\nu_n)$.} 
	On the event $\Bb$, recalling \cref{eqn:eventB}, and using monotonicity of the $\ell$-values (\cref{lem:monotonicity}) and the fact that $\lambda_-$ is bounded away from zero (\cref{lem:ExistsLambda+-}), we see that \[ \begin{split} \#\braces{i\in S_0 :(\vphi_{\lambda_-,w_+})_i=0} &\leq \#\braces{i\in S_0 : \ell_{i,w_+}> \delta_n} \\ &\leq \#\braces{i\in S_0 : \ell_{i,w_-}> \delta_n}=K_n\leq s_n(\rho_n+\nu_n).\end{split}\] Since $\ell_{i,w_+}\leq 1$ for all $i$, we also note that
	\[ \sum_{i\in S_0} \ell_{i,w_+} \leq K_n+ \sum_{i\in S_0} \delta_n \leq s_n(\rho_n+\nu_n+\delta_n).\]   Then on $\Bb$, monotonicity arguments as used for the upper bound yield
	\begin{equation} \label{eqn:postFDRlambda-upperbound}
		\begin{split}
			\postFDR_{\hat{w}}(\vphi_{\lambda_-,\hat{w}}) % \leq \postFDR_{\hat{w}}(\vphi_{\lambda_-,w_+}) 
			& \leq \postFDR_{w_-}(\vphi_{\lambda_-,w_+}) \\ 
			&\leq \frac{\sum_{i\not\in S_0} \ell_{i,w_-}\II\braces{\ell_{i,w_+}<\lambda_-}+s_n(\rho_n+\nu_n+\delta_n)}{s_n-s_n(\rho_n+\nu_n) +V_{\lambda_-,w_+}}.
		\end{split}
	\end{equation}
	
	Applying Bernstein's inequality as for the upper bound, here with variables $U_i=\ell_{i,w_-}\II\braces{\ell_{i,w_+}<\lambda_-},~i\not\in S_0$, we deduce that there is an event $\Cc'\subset \Bb$ of probability at least $1-o(\nu_n)$ such that
	\[\begin{split} &\postFDR_{\hat{w}}(\vphi_{\lambda_-,\hat{w}}) \II_{\Cc'} \\ \leq &  \frac{(n-s_n)F_{w_+}(\lambda_-) E_{\theta_0=0}[\ell_{1,w_-}\mid \ell_{1,w_+} < \lambda_-] + s_n(\rho_n+(1+a%t^{1/2}(1-t)^{-1/2}
		)\nu_n+\delta_n)}
	{s_n+(n-s_n)F_{w_+}(\lambda_-)-s_n(\rho_n+(1+a)%t^{1/2}(1-t)^{-1/2})
		\nu_n)}.\end{split}\] 
	Substituting for $(n-s_n)F_{w_+}(\lambda_-)E_{\theta_0=0}[\ell_{1,w_-}\mid\ell_{1,w_+}<\lambda_-]$ in the numerator from the definition \cref{eqn:def:lambda-} of $\lambda_-$, the right side is upper bounded by $t$ if $A$ is large enough, so that indeed $\lambda_-\leq\hat{\lambda}$ on $\Cc'$. 
\end{proof}

\begin{proof}[Proof of \cref{lem:lvalsSmallForTrueSignals}]
	The idea is that for $w$ of order $(s_n/n)(\log(n/s_n))^{1/2}$ (which is the case for $w=w_-,w_+$ or, with high probability, $w=\hat{w}$), the function $\ell(x;w)$ defining the $\ell$-values is vanishingly small when $x>\sqrt{2\log(n/s_n)+u_n}$ for $u_n$ tending to infinity slowly. In contrast, for $i\in S_0$ we have $\abs{X_i}>\sqrt{2\log(n/s_n)}+v_n/2$ with probability tending to 1 since $X_i-\theta_{0,i} = \eps_i\sim \Nn(0,1)$ is bounded in probability. Taylor expanding the square root reveals $\sqrt{2\log(n/s_n)+u_n}=\sqrt{2\log(n/s_n)}+o(1)$ and thus will yield the claim.
		
	Let $u_n=5\log \log (n/s_n)$ and define 
	\[ S_2 = \braces{i\in S_0 : \abs{X_i}>\sqrt{2 \log (n/s_n)+u_n}}.\]
	First, we show that $S_2\subset S_1$. 
	From \cref{lem:ResultsFromCR18} we have, for $\xi=(\phi/g)^{-1}$,
	\begin{equation*} 
		\xi(u) \leq \brackets[\Big]{ 2\log(1/u) + 2\log\log (1/u) +6\log 2}^{1/2}.\end{equation*}
	The right side is decreasing in $u$ so, recalling that $w_-\geq s_n/n$ (\cref{lem:ExistsW+-}), we see that $\xi$ evaluated at $u=(w_-/(2\log(n/s_n)))$ is upper bounded by the right side evaluated at $u=s_n/(2 n\log (n/s_n))$, hence 
	\begin{align*} &\xi\brackets[\Big]{\frac{w_-}{2\log(n/s_n)}} \\ %&\leq \xi\brackets[\Big]{\frac{s_n}{2n\log(n/s_n)}} \\
		\leq& \sqrt{ 2\log(n/s_n) + 4\log \log (n/s_n) +2\log\log\log(n/s_n)+8\log2+2\log\log 2} \\
		\leq& \sqrt{2\log(n/s_n)+u_n}, 
	\end{align*}
	for $n$ large.
	Consequently we see that if $\abs{x}>\sqrt{2\log(n/s_n)+u_n}$, then \[\phi(x)/g(x)= \xi^{-1}(x) > w_-/(2\log(n/s_n))=\tfrac{1}{2}w_-\delta_n,\] so that
	\begin{equation*}
		\ell(x;w_-)%_{i,w_-}(x)
		=\brackets[\Big]{1+\frac{w_-}{1+w_-} \frac{g}{\phi}(x)}^{-1} \leq \delta_n,
	\end{equation*} and indeed $S_2\subset S_1$.
	
		Next, observe, by Taylor expanding, that $\sqrt{2\log(n/s_n)+u_n}=\sqrt{2\log(n/s_n)}+o(1)$. We deduce that for $i\in S_0\setminus S_2$, necessarily the noise variable $\eps_i$ in \cref{eqn:def:GaussianSequenceModel} satisfies $\abs{\eps_i}>v_n/2,$
	so that $\abs{S_0\setminus S_1}\leq \abs{S_0\setminus S_2}\leq N,$ where $N$ is the binomial $N=\#\braces{i \in S_0 : \abs{\eps_i}>v_n/2}$. Applying Bernstein's inequality (\cref{lem:Bernstein}) with \[U_i=\II\braces[\big]{ \abs{\eps_i}>v_n/2},\quad u=\max\brackets[\big]{EN,\nu_n}\geq \sum_{i\in S_0} \Var U_i,\] we see that
	\[\Pr(N>EN +u)\leq \exp\brackets[\Big]{-\frac{u^2/2}{u+u/3}}=o(\nu_n),\] for large enough constant $\alpha$ in the definition \cref{eqn:def:nu} of $\nu_n$.
	Finally, note that $EN=2s_n\bar{\Phi}(v_n/2)\leq s_n \rho_n$ for $\rho_n=e^{-v_n^2/9}$ as defined in \cref{eqn:def:rho}, at least for $n$ large, as a consequence of the standard tail bound $\bar{\Phi}(x)\asymp \phi(x)/x\ll e^{-x^2/2}$ as $x\to \infty$.
\end{proof}

\begin{proof}[Proof of \cref{lem:VLambdaWConcentrates}]
	\Cref{lem:ExistsW+-} tells us that $w\asymp (s_n/n)(\log (n/s_n))^{1/2}$ for $w\in \braces{w_-,w_+}$, and \cref{lem:ExistsLambda+-} tells us that $1-\lambda\asymp \delta_n$ for $\lambda\in \braces{\lambda_-,\lambda_+}$. Then $V_{\lambda,w}=\sum_{i\not \in S_0} \II\braces{\ell_{i,w}<\lambda}$ follows a binomial distribution, whose mean we deduce by \cref{lem:AsymptoticsOfFwLambda} satisfies
	\[ \begin{split} E[V_{\lambda,w}] = (n-s_n)F_w(\lambda) &\asymp (n-s_n)w(1-\lambda)^{-1} (\log(1/w))^{-3/2} \\ &\asymp %t(1-t)^{-1} 
	(n-s_n)w(\log(n/s_n))^{-1/2}, \end{split} \] 
	so that again appealing to \cref{lem:ExistsW+-}, we have \( E[V_{\lambda,w}]\asymp %t(1-t)^{-1}
	s_n.\)
	We apply Bernstein's inequality \cref{lem:Bernstein} with, for some $a=a(t)$,
	\[ U_i= \II\braces{\ell_{i,w}<\lambda}, \qquad u= a s_n\nu_n.\]
	Then $\sum_{i\not\in S_0} \Var(U_i) \leq E[V_{\lambda,w}]\asymp s_n$ so that for a constant $C$, larger than $1/2$ for $a$ large enough,
	\[ P_{\theta_0}(\abs{V_{\lambda,w}-E[V_{\lambda,w}]}\geq u) \leq 2\exp\brackets{-C\log s_n}\leq 2s_n^{-1/2}=o(\nu_n). \qedhere\]
\end{proof}

\begin{proof}[Proof of \cref{lem:EVw+closetoEVw-}] 	We prove the control \cref{eqn:EV++} for $E[V_{\lambda_+,w_+}]$; the proof for $E[V_{\lambda_-,w_-}]$ is almost identical. The starting point is the bound $w_+/w_-=O(1)$ from \cref{lem:ExistsW+-} and we combine this with control of the functions making up the expectation. By \cref{lem:AsymptoticsOfFwLambda} we have $E V_{\lambda,w}= (n-s_n) F_w(\lambda)=2(n-s_n)\bar{\Phi}(\xi(r(w,\lambda)))$ for any $\lambda,w\in(0,1)$, where we recall the definitions $r(w,\lambda)=w\lambda(1-w)^{-1}(1-\lambda)^{-1}$, $\xi=(\phi/g)^{-1}$, so that our goal is to bound \[\frac{E[V_{\lambda_+,w_+}]}{E[V_{\lambda_+,w_-}]}-1=\frac{\bar{\Phi}\brackets[\big]{\xi(r(w_+,\lambda_+))}}{\bar{\Phi}\brackets[\big]{\xi(r(w_-,\lambda_+))}}-1.\]

Write $r_\pm=r(w_\pm,\lambda_+)$ and $\xi_\pm=\xi(r_\pm)$ (the notation $\xi_+$ is to link to $r_+$, not to claim that $\xi_+\geq \xi_-$). As a consequence of \cref{lem:ExistsW+-,lem:ExistsLambda+-}, 
	\[\log (1/r_-)\asymp \log (1/r_+) \asymp \log (n/s_n)=\delta_n^{-1}.\]
	Recalling that $\xi(u)\sim (-2\log u)^{1/2}$ as $u\to 0$ (see \cref{lem:ResultsFromCR18}) it follows that $\xi_\pm\to \infty$, hence by a standard normal tail bound (also in \cref{lem:ResultsFromCR18}) we have 
	\[ 0\leq \frac{\bar{\Phi}(\xi_+)}{\bar{\Phi}(\xi_-)}-1 \leq \frac{(1+\xi_-^2)}{\xi_-^2}\frac{\xi_-}{\xi_+}\frac{\phi(\xi_+)}{\phi(\xi_-)}-1= O\brackets[\Big]{\max \brackets[\Big]{\frac{1}{\xi_-^2},\frac{\xi_-}{\xi_+}-1,\frac{\phi(\xi_+)}{\phi(\xi_-)}-1}},\] provided the right hand side tends to zero, using that for $a_n,b_n\to 0$, $(1+a_n)(1+b_n)-1=O(\max(a_n,b_n))$. That $\xi(u)\sim (-2\log u)^{1/2}$ as $u\to 0$ implies $\xi_-^{-2}-1 =O( (\log (1/r_-))^{-1})=O(\delta_n)$.
Next, by \cref{lem:xi+-differencebounded} we have
		\[\xi_-^2-\xi_+^2 = O(1),\]
hence
	\[ \frac{\xi_-}{\xi_+} -1 = \frac{\xi_-^2-\xi_+^2}{\xi_+\xi_-+\xi_+^2}=O\brackets[\Big]{(\log \tfrac{n}{s_n})^{-3/2}}=o(\delta_n).\] 
	It remains to control $\phi(\xi_+)/\phi(\xi_-)-1$. 
	By the definition of $\xi$ we have
	\[ \frac{\phi(\xi_+)}{\phi(\xi_-)} = \frac{r_+g(\xi_+)}{r_- g(\xi_-)}.\] 
	\Cref{lem:w+/w-to1} tells us that \[ %\frac{w_+}{w_-}-1 = O(\delta_n), \quad 
	\frac{r_+}{r_-}-1 = O(\max(\nu_n,\rho_n)),\] so that it suffices to show $g(\xi_+)/g(\xi_-)-1=O(\delta_n)$. From the explicit definition \cref{eqn:def:gInQuasiCauchy} of $g$, we have
	\[ \frac{g(\xi_+)}{g(\xi_-)}-1= \frac{\xi_-^2}{\xi_+^2} \frac{1-e^{-\xi_+^2/2}}{1-e^{-\xi_-^2/2}}-1.\] Observe that, for $n$ large, \[ \frac{1-e^{-\xi_+^2/2}}{1-e^{-\xi_-^2/2}}-1 = \frac{e^{-\xi_-^2/2}-e^{-\xi_+^2/2}}{1-e^{-\xi_-^2/2}}\leq 2e^{-\xi_-^2/2}.\]
	The lower bound on $\xi$ in \cref{lem:ResultsFromCR18} implies that $\xi(u)\geq \sqrt{2\log(1/u)}$ for $u$ small, so that $e^{-\xi_-^2}\leq r_-^2,$ which is of smaller order than $\delta_n$ (note that $r_-\asymp (s_n/n)(\log (n/s_n))^{3/2}$ as a consequence of \cref{lem:ExistsW+-,lem:ExistsLambda+-}). Noting that the bound attained above for $\xi_-/\xi_+-1$ also bounds $\xi_-^2/\xi_+^2-1$, we deduce that $\phi(\xi_+)/\phi(\xi_-)-1$ is suitably bounded and the \namecref{lem:EVw+closetoEVw-} follows. 
\end{proof}

\section{Sparsity preserving procedures and optimality of the boundary} \label{sec:sparsity-preserving}

Here we make precise the claim of \cref{sec:optimality} that the condition $v_n\to\infty$ in \cref{thm:FDRConvergenceNoRate} cannot be relaxed at all without weakening the conclusion of the \namecref{thm:FDRConvergenceNoRate} (and correspondingly for \cref{thm:QvalueControlsFDR}).

Define a multiple testing procedure\footnote{Strictly, one must consider \emph{sequences} of testing procedures $\vphi^{(n)}$ and sets $\Theta_n$ in the definition; we leave this implicit.} $\vphi$ to be \emph{sparsity preserving at a level $A_n$ over a set $\Theta$} if 
\begin{equation}\label{eqn:def:sparsity-preserving} \sup_{\theta_0\in\Theta} P_{\theta_0}\big(\sum_{i\leq n} \vphi_i >A_ns_n\big)\to 0.\end{equation}
One has the following (recall the definition \cref{eqn:def:StrongSignalClass} of $\ell_0(s_n;b)$).
\begin{theorem*}[Theorem 3 in \cite{ACR21lb}]
	For fixed $b\in\RR$ and a sequence $(A_n)_{n\in\NN}$ with $A_n\in [2,e^{(\log(n/s_n))^{1/4}}]$, for any $\vphi$ satisfying \cref{eqn:def:sparsity-preserving} with $\Theta=\ell_0(s_n;b)$, we have
	\[ \sup_{\theta_0 \in \ell_0(s_n;b)} \FNR(\vphi;\theta_0) = \bar{\Phi}(b)+o(1).\]
	In particular, the FNR of such a procedure is bounded away from zero for $n$ large.
\end{theorem*}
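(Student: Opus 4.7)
The plan is to establish a minimax-style lower bound: dominate $\sup_{\theta_0}\FNR(\vphi;\theta_0)$ by the Bayesian FNR under a prior $\pi$ supported on $\ell_0(s_n;b)$, and then lower-bound the latter by the performance of the Bayes-optimal rule subject to the sparsity budget $A_n s_n$, which will reduce to thresholding $|X_i|$. First, setting $\mu_n := \sqrt{2\log(n/s_n)}+b$, let $\pi$ be the uniform distribution on $\{\theta \in \{-\mu_n, 0, \mu_n\}^n : |\support(\theta)|=s_n\}$, which is contained in $\ell_0(s_n;b)$. Uniform sparsity preservation implies that the event $\mathcal{E}=\{\sum_i \vphi_i\leq A_n s_n\}$ has $P_{\theta_0}(\mathcal{E})\geq 1-o(1)$ uniformly in $\theta_0\in\ell_0(s_n;b)$, so replacing $\vphi$ by $\vphi\II_{\mathcal{E}}$ only changes the FNR by $o(1)$ and we may assume $\sum_i \vphi_i\leq A_n s_n$ almost surely. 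Setting $p_i(X) := P_\pi(i\in\support(\theta)\mid X)$, one then has
\[\sup_{\theta_0\in\ell_0(s_n;b)}\FNR(\vphi;\theta_0)\geq E_\pi\FNR(\vphi;\theta_0)= 1- s_n^{-1}\sum_{i=1}^n E[\vphi_i(X)\, p_i(X)].\]

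Next, observe that for fixed $X$, maximising $\sum_i \vphi_i p_i(X)$ over $\{0,1\}^n$ vectors with $\sum\vphi_i\leq A_n s_n$ amounts to picking the $A_n s_n$ coordinates with largest $p_i(X)$. To identify these explicitly I would pass to the auxiliary iid prior $\pi_{\ast}$ under which the $\theta_i$ are independent, each taking values $\pm\mu_n$ with probability $s_n/(2n)$ and value $0$ with probability $1-s_n/n$: a Stirling/local-CLT estimate yields $d_{TV}(\pi,\pi_{\ast})=O(s_n^{-1/2})=o(1)$, which transfers the lower bound from $\pi$ to $\pi_{\ast}$ up to $o(1)$. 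Under $\pi_{\ast}$ the posterior factorises, $p_i(X)=p(X_i)$ with $p$ even and strictly increasing in $|\cdot|$ (a direct check using the identity $\bar g(x)/\phi(x) = e^{-\mu_n^2/2}\cosh(\mu_n x)$, where $\bar g = (\phi(\cdot-\mu_n)+\phi(\cdot+\mu_n))/2$), so the Bayes-optimum is a pure threshold rule $\vphi^{*}_i=\II\{|X_i|>T_n\}$ with $T_n$ determined by $n P_{\pi_{\ast}}(|X_1|>T_n)=A_n s_n$.

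Finally, using the Gaussian tail estimate $2\bar\Phi(x)\asymp x^{-1}e^{-x^2/2}$ in the defining equation for $T_n$ yields $T_n^2 = 2\log(n/s_n) + O(\log\log(n/s_n) + \log A_n)$. The hypothesis $A_n\leq e^{(\log(n/s_n))^{1/4}}$ gives $\log A_n\leq (\log(n/s_n))^{1/4}$, hence $T_n = \sqrt{2\log(n/s_n)} + o(1)$, equivalently $T_n-\mu_n = -b + o(1)$. By exchangeability and sign-symmetry under $\pi_{\ast}$, the oracle's Bayesian FNR equals
\[E_{\pi_{\ast}}\FNR(\vphi^{*};\theta_0) = P_{\pi_{\ast}}(|X_1|\leq T_n \mid \theta_1 = \mu_n) = \Phi(T_n - \mu_n) - \Phi(-T_n - \mu_n) = \bar\Phi(b) + o(1),\]
the second term vanishing since $T_n+\mu_n\to\infty$. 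Chaining the three inequalities yields the announced lower bound, which is positive and hence bounded away from zero. The principal technical obstacle is the passage from the combinatorial prior $\pi$, which lives exactly on $\ell_0(s_n;b)$, to the iid prior $\pi_{\ast}$ under which the posterior factorises and Neyman--Pearson applies cleanly; propagating the total-variation bound through the Bayes-risk comparison requires the sparsity $s_n\to\infty$ and some care with the stochastic denominator in the FNR. The calibration $\log A_n\leq (\log(n/s_n))^{1/4}$ is then exactly what keeps $T_n - \mu_n = -b + o(1)$ and so guarantees the limit value $\bar\Phi(b)$ rather than something strictly smaller.
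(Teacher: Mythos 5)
This statement is quoted from the external reference \cite{ACR21lb} (its Theorem~3); the present paper cites it but does not prove it, so there is no internal proof to compare against. Your overall strategy --- bound the worst-case FNR below by a Bayes FNR against an exchangeable prior $\pi$ supported on $\ell_0(s_n;b)$, use sparsity preservation to restrict to the budget $\sum_i\vphi_i\leq A_ns_n$ up to an $o(1)$ cost, note that the optimal budgeted selection picks the coordinates with the largest posterior alternative probabilities, identify this selection with thresholding $|X_i|$, and compute the limiting missed-detection probability --- is the standard minimax route and the threshold calibration $T_n=\sqrt{2\log(n/s_n)}+o(1)$ under the assumption $\log A_n\leq(\log(n/s_n))^{1/4}$ is correct.

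However, the step where you transfer from the uniform-over-supports prior $\pi$ to the iid prior $\pi_*$ via total variation is wrong. You claim $d_{TV}(\pi,\pi_*)=O(s_n^{-1/2})$, but the opposite is true: $\pi$ is concentrated on the set $S=\{\theta:|\support(\theta)|=s_n\}$, while the local CLT / Stirling estimate you invoke shows $\pi_*(S)\asymp s_n^{-1/2}\to 0$, hence $d_{TV}(\pi,\pi_*)\geq\pi(S)-\pi_*(S)=1-O(s_n^{-1/2})\to 1$. The correct reading of that local-CLT computation is that $\pi$ equals $\pi_*$ conditioned on $S$, not that the two are TV-close, so it cannot be used to transport the Bayes-risk bound. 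To identify the budgeted Bayes rule under $\pi$ with thresholding $|X_i|$, you should instead argue directly from the permutation and sign symmetry of $\pi$: for $i\ne j$, pairing configurations that swap $\theta_i\leftrightarrow\theta_j$ and summing over $\theta_i,\theta_j\in\{\pm\mu_n\}$ shows that $p_i(X)-p_j(X)$ has the sign of $\cosh(\mu_n X_i)-\cosh(\mu_n X_j)$, i.e.\ of $|X_i|-|X_j|$; so the optimal budgeted selection is exactly the $A_ns_n$ largest $|X_i|$'s, with no detour through $\pi_*$. A secondary issue is that your closing formula $E_{\pi_*}\FNR(\vphi^*;\theta)=P_{\pi_*}(|X_1|\leq T_n\mid\theta_1=\mu_n)$ quietly fixes the FNR denominator at $s_n$, whereas under $\pi_*$ the number of non-nulls is random; this is another reason to work with $\pi$ directly, where the denominator is deterministically $s_n$. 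Finally note that, taken literally, the quoted statement is a sharp characterisation ($=\bar\Phi(b)+o(1)$), whereas your argument only yields the lower bound $\geq\bar\Phi(b)+o(1)$; since the lower bound is all the surrounding discussion uses, this is a minor point, but if you intend to prove the full statement you would also need a matching achievability argument (e.g., a thresholding procedure attaining FNR $\bar\Phi(b)+o(1)$ while being sparsity preserving).
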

		
We now prove that both the $\Cl$- and $q$-value procedures are sparsity preserving, so that the above theorem applies to show the conclusions of \cref{thm:FDRConvergenceNoRate,thm:QvalueControlsFDR} are impossible for $v_n=b\in\RR$ fixed.
The following lemma will be helpful.
\begin{lemma}\label{lem:sparsity-preserving}
	Any procedure whose number of false positives $V$ satisfies, for some event $\Aa_n$,
\[ \sup_{\theta_0\in \Theta} E[V\II_{\Aa_n}] = O(s_n), \quad \sup_{\theta_0\in \Theta} P_{\theta_0}(\Aa_n^c)=o(1),\] is sparsity preserving at level $A_n$ on the set $\Theta$ for any sequence $A_n\to\infty$.
\end{lemma}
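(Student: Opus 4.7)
The plan is to reduce the sparsity-preserving tail bound to a simple Markov inequality on $V$, using that the total number of discoveries is at most $V+s_n$ under sparsity.

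First I would note that, since we are working over $\Theta \subseteq \ell_0(s_n)$ (all the sparsity-preserving statements in the paper involve sparse classes, so the support $S_0$ of any $\theta_0\in\Theta$ has size at most $s_n$), the number of true positives is at most $s_n$. Therefore
\[ \sum_{i\le n}\vphi_i \le V + s_n,\]
and hence
\[ P_{\theta_0}\Bigl(\sum_{i\le n}\vphi_i > A_n s_n\Bigr) \le P_{\theta_0}\bigl(V > (A_n-1)s_n\bigr).\]

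Next I would split on the event $\Aa_n$:
\[ P_{\theta_0}\bigl(V > (A_n-1)s_n\bigr) \le P_{\theta_0}\bigl(V\II_{\Aa_n} > (A_n-1)s_n\bigr) + P_{\theta_0}(\Aa_n^c).\]
By Markov's inequality and the first hypothesis,
\[ P_{\theta_0}\bigl(V\II_{\Aa_n} > (A_n-1)s_n\bigr) \le \frac{E[V\II_{\Aa_n}]}{(A_n-1)s_n} \le \frac{C}{A_n-1},\]
for some constant $C>0$ that is uniform over $\Theta$, by the assumption $\sup_{\theta_0\in\Theta} E[V\II_{\Aa_n}]=O(s_n)$. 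Since $A_n\to\infty$, this bound tends to $0$ uniformly in $\theta_0$. The second term $P_{\theta_0}(\Aa_n^c)$ also tends to $0$ uniformly in $\theta_0$ by the second hypothesis. Taking $\sup_{\theta_0\in\Theta}$ and letting $n\to\infty$ yields the sparsity-preserving condition \eqref{eqn:def:sparsity-preserving}.

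There is no real obstacle here; the only subtlety worth flagging is the implicit assumption $\Theta\subseteq\ell_0(s_n)$, which is what makes the bound $\sum_i\vphi_i\le V+s_n$ valid and is in force throughout the paper's applications (in particular $\Theta=\ell_0(s_n;v_n)$ for the $\Cl$- and $q$-value procedures, where the signal support has size exactly $s_n$).
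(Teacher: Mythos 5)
Your proof is correct and is essentially the paper's own argument, just written out with the intermediate splitting step made explicit (the paper compresses the Markov-plus-good-event bound into a single displayed line). Your flag about $\Theta\subseteq\ell_0(s_n)$ is appropriate and matches the paper's implicit usage.
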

\begin{proof}
	Simply note that $\sum_{i\leq n}\vphi_i \leq s_n + V$, so that an application of Markov's inequality yields
	\[ P_{\theta_0}\big(\sum_{i=1}^{n}\vphi_i >A_n s_n\big)\leq P_{\theta_0}(\Aa_n^c)+ (A_n-1)^{-1} s_n^{-1}E_{\theta_0} [V\II_{\Aa_n}] \to 0.\qedhere\]
\end{proof}

We will also use the following result which extends key conclusions of \cref{lem:ExistsW+-,lem:HatWConcentrates} to the weaker signal class $\ell_0(s_n;b)$.

\begin{lemma}[Lemmas 9 and 10 in \cite{ACR21lb}]\label{lem:ExistsW+-WeakSignal}
	There exist $w_-\leq w_+$ for which
	\[ \sup_{\theta_0\in \ell_0(s_n;b)} P_{\theta_0}(\hat{w}\not \in (w_-,w_+))\to 0,\] such that $w_\pm \asymp (s_n/n)(\log(n/s_n))^{1/2}$ and, for some constant $\nu \in (0,1/2)$,
	\[ \sum_{i\in S_0} m_1(\theta_{0,i},w_+) = (1-\nu)(n-s_n)\tilde{m}(w_+).\] 
\end{lemma}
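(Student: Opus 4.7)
The plan is to prove \cref{lem:ExistsW+-WeakSignal} by mirroring the proofs of \cref{lem:ExistsW+-} and \cref{lem:HatWConcentrates}, with the essential modification that the vanishing accuracy $\nu_n = \alpha s_n^{-1/2}(\log s_n)^{1/2}$ used there is replaced by a small fixed positive constant $\nu$. This relaxation is forced on us by the weaker signal condition $|\theta_{0,i}|\geq \sqrt{2\log(n/s_n)}+b$: we no longer have the sharp $w\,m_1(\theta_{0,i},w)\to 1$, but only the two-sided bound $m_1(\theta_{0,i},w)\asymp 1/w$, with implicit constants depending on $b$.

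\emph{Existence and asymptotics of $w_\pm$.} For each $\theta_0\in\ell_0(s_n;b)$ define $w_\pm=w_\pm(\theta_0)$ as solutions to
\[ \sum_{i\in S_0} m_1(\theta_{0,i},w_\pm) = (1\mp\nu)(n-s_n)\,\tilde m(w_\pm). \]
As in \cref{lem:ExistsW+-}, the left-hand side is continuous and strictly decreasing in $w$, while the right-hand side is continuous and strictly increasing. Using $m_1(\theta_{0,i},w)\asymp 1/w$ uniformly for $i\in S_0$, together with $\tilde m(w)\asymp (\log(1/w))^{-1/2}$, one checks that at $w=s_n/n$ the LHS dominates, while at $w=C(s_n/n)\sqrt{\log(n/s_n)}$ (for $C=C(b)$ large enough and $\nu$ small enough) the RHS dominates. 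The intermediate value theorem then gives existence of $w_\pm$, and substitution into the defining equation yields the asymptotic $w_\pm\asymp (s_n/n)(\log(n/s_n))^{1/2}$; uniqueness comes from the opposing monotonicities.

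\emph{Concentration of $\hat w$.} The argument of \cref{lem:HatWConcentrates} applies essentially verbatim. Since $S=L'$ is a.s.\ strictly decreasing (\cref{lem:monotonicity}) and $\hat w$ maximises $L$, we have $\{\hat w<w_-\}\subseteq\{S(w_-)<0\}$ and $\{\hat w>w_+\}\subseteq\{S(w_+)>0\}$. By construction $E_{\theta_0}[S(w_\pm)] = \mp\nu(n-s_n)\tilde m(w_\pm)$, and Bernstein's inequality applied to the centred score—using the uniform summand bound $|\beta/(1+w\beta)|\lesssim 1/w$ and the variance estimate $\mathrm{Var}(S(w))\lesssim nw^{-1}\tilde m(w)$ exactly as in \cref{lem:HatWConcentrates}—yields
\[ P_{\theta_0}(\hat w\notin(w_-,w_+)) \lesssim \exp\{-c\,\nu^2\, n\, w_\pm\,\tilde m(w_\pm)\} \lesssim e^{-c's_n},\]
for some $c,c'>0$ depending on $b$, using $n\,w_\pm\,\tilde m(w_\pm)\asymp s_n$ from the previous step. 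This vanishes as $s_n\to\infty$; crucially, since $\nu$ is a fixed constant, we do not need the $\nu_n^2 s_n \gtrsim \log s_n$ condition that forced $\alpha$ to be taken large in \cref{lem:HatWConcentrates}.

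The principal obstacle is uniformity across $\ell_0(s_n;b)$: every constant entering the existence argument and the Bernstein bound must depend on $\theta_0$ only through its membership in this class. This reduces to verifying that the two-sided bound $m_1(\theta_{0,i},w)\asymp 1/w$, together with the analogous bound on $m_2$, depend on each $\theta_{0,i}$ only through the lower bound $|\theta_{0,i}|\geq\sqrt{2\log(n/s_n)}+b$, and not on its specific value—which is exactly the content of the `$m_1$ bounds any boundary' estimate invoked in \cref{lem:ExistsW+-} and its $m_2$ analogue. Granting these uniform estimates, the proof transfers uniformly over the class.
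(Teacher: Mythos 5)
The paper does not prove this lemma itself—it is imported verbatim from the companion paper \cite{ACR21lb} (as the bracketed annotation ``Lemmas 9 and 10 in \cite{ACR21lb}'' signals)—so there is no in-paper proof to compare against. Assessing your reconstruction on its own terms: the overall strategy (freeze $\nu_n$ at a constant $\nu$, push the existence/IVT argument of \cref{lem:ExistsW+-} and the Bernstein bound of \cref{lem:HatWConcentrates} through, note that constant $\nu$ makes the Bernstein exponent $\asymp\nu^2 s_n\to\infty$ without any constraint of the type $\alpha$ large) is the right one and matches the architecture the paper relies on.

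There is, however, a genuine gap at the step you flag as the ``principal obstacle'' and then dismiss by invoking ``the `$m_1$ bounds any boundary' estimate''. \Cref{lem:m1boundsAnyBoundary} is stated and proved only under $v_n\to\infty$, and its proof uses that hypothesis essentially: it sets $a=1+\tfrac{1}{2}v_n/\zeta(w)$ so that $(a-1)\zeta(w)=v_n/2\to\infty$, which simultaneously forces $\bar\Phi(a\zeta(w)-\mu)\to1$ and $w\beta(a\zeta(w))\gtrsim e^{v_n^2/8}\to\infty$, and the error term $\rho_n=e^{-v_n^2/9}$ vanishes. For $v_n\equiv b$ fixed, $(a-1)\zeta(w)=b/2$ stays bounded, $\rho_n=e^{-b^2/9}$ is a fixed constant (arbitrarily close to $1$ for $|b|$ small), and the resulting bound $wm_1\geq\tfrac{w\beta(a\zeta)}{1+w\beta(a\zeta)}\bar\Phi(a\zeta-\mu)-(1-\bar\Phi(a\zeta-\mu))$ degenerates to something of the form $\tfrac32\Phi(b)-1$, which is nonpositive for $\Phi(b)\leq 2/3$. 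So the lower bound $m_1(\theta_{0,i},w)\gtrsim 1/w$—which you need at $w\asymp s_n/n$ to anchor the IVT argument—does not ``transfer'' from \cref{lem:m1boundsAnyBoundary} as you assert. To repair this one must redo the estimate with a different split, e.g.\ $a=1+c/\zeta(w)$ for a \emph{fixed} small $c>0$ (so $w\beta(a\zeta(w))\asymp e^{c\zeta(w)}\to\infty$ and $\bar\Phi(a\zeta(w)-\mu)\to\bar\Phi(c-b)>0$), and sharpen the bound on the bulk integral $\int_{|x|<a\zeta(w)}\tfrac{w\beta}{1+w\beta}\phi(x-\mu)\,\mathrm{d}x$ from the crude $\geq-1$ to $\geq -w$ (since $\beta\geq-1/2$ gives $\tfrac{w\beta}{1+w\beta}\geq-w/(2-w)$), yielding $wm_1\geq\bar\Phi(c-b)-o(1)>0$ uniformly. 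The same caveat applies to the $m_2$ bound used in the variance estimate, though there the cited bound from \cref{lem:ResultsFromCR18} requires only $|\tau|\geq M_0$, which does hold. Without the corrected $m_1$ lower bound your existence step (``at $w=s_n/n$ the LHS dominates'') is not justified.
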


Armed with the above results, we prove that the procedures considered herein are sparsity preserving for any sequence $A_n\to\infty$ over the sets $\ell_0(s_n;b)$ for any $b\in \RR$ fixed.
\paragraph{$q$-value procedure}

With notation as in \cref{sec:proofs:qval}, by \cref{eqn:EV'} we have for $w$ small enough 
\begin{equation*} \begin{split}
	E_{\theta_0} V'_{w}=2(n-s_n)\overline{\Phi}(\chi(r(w,t))) &= (n-s_n)r(w,t)2\overline{G}(\chi(r(w,t))) \\&\leq C(t) (n-s_n) w \tilde{m}(w). 
\end{split}
\end{equation*}
Noting that $m_1(\theta_{0,i},w_+)\leq 1/w_+$ (see the proof of \cref{lem:m1boundsAnyBoundary} and observe that the relevant part holds for $v_n=b\in \RR$ fixed), we deduce that for $w=w_+$ defined as in \cref{lem:ExistsW+-WeakSignal} we have
\[ E_{\theta_0} V'_{w_+} \leq  C(t,\nu) \sum_{i\in S_0}w_+m_1(\theta_{0,i},w_+)\leq C'(t,\nu)s_n,\] so that 
\cref{lem:sparsity-preserving} with $\mathcal{A}_n=\{\hat w\leq w_+\}$ and monotonicity of $E_{\theta_0} V'_{w}$ in $w$ yield the claimed sparsity-preservingness of $\vphi^{\qval}$.

\paragraph{$\Cl$-value procedure}
With \cref{lem:ExistsW+-WeakSignal} replacing \cref{lem:ExistsW+-,lem:HatWConcentrates}, the proofs of \cref{lem:ExistsLambda+-,lem:HatLambdaConcentrates,lem:EVw+closetoEVw-} go through unchanged to show that there exists $\lambda_+$ such that
$(n-s_n)F_{w_+}(\lambda_+)\leq Cs_n$ for a constant $C>0$ and $P_{\theta_0}(\hat{\lambda}>\lambda_+)=o(1)$. [Note the proofs for $\lambda_-$, which would require an adapted version of \cref{lem:lvalsSmallForTrueSignals}, are not needed here.] Then \cref{lem:sparsity-preserving} yields the claimed sparsity-preservingness of $\vphi^{\Cl}$, since 
\[E_{\theta_0}[ V_{\hat{\lambda},\hat{w}}\II\braces{\hat{\lambda}\leq \lambda_+, \hat{w}<w_+}]\leq E_{\theta_0}V_{\lambda_+,w_+} = (n-s_n)F_{w_+}(\lambda_+),\] 
which is $O(s_n)$ as noted just above.

\begin{remark}
	Similar arguments reveal that $\sup_{\theta_0 \in \ell_0(s_n,b)} \FDR(\vphi,\theta_0)\leq ct+o(1)$ for some constant $c>0$ for $b\in \RR$ fixed. 
	Indeed, in the proof of Theorem 5(i) in \cite{ACR21lb}, noting that $\Lambda_\infty$ defined therein equals $\bar{\Phi}(b)$ in the current setting, it is argued that on an event of probability tending to 1
	\[ \#\braces{i\in S_0: \ell_{i,\hat{w}}<t}\geq s_n(1-\bar{\Phi}(b))/2=s_n\Phi(b)/2.\]
	Recalling that $\hat{\lambda}\geq t$ and using monotonicity as in the proof of \cref{thm:FDRConvergenceNoRate}, one has
	\[ \FDP(\vphi^{\Cl};\theta_0) \leq \frac{ V_{\lambda_+,w_+}}{V_{\lambda_+,w_+}+ s_n\Phi(b)/2} +o_p(1).\]
	Taking expectations and using Jensen's inequality for the concave map $x\mapsto x/(s_n\Phi(b)/2+x)$ we obtain
	\[ \FDR(\vphi^{\Cl};\theta_0) \leq \frac{E V_{\lambda_+,w_+}}{E V_{\lambda_+,w_+}+s_n\Phi(b)/2}+o(1).\] Inserting the bound $E_{\theta_0}V_{\lambda_+,w_+}\leq Cs_n$ yields the claim. (We make no attempt to obtain the sharp constant $c$, which we believe will be strictly larger than 1 in this setting.)
\end{remark}
\section*{Acknowledgements}
This work has been supported by ANR-16-CE40-0019 (SansSouci), ANR-17-CE40-0001 (BASICS) and by the GDR ISIS through the "projets exploratoires" program (project TASTY). It was mostly completed while K.A.\ was at Université Paris-Saclay, supported by a public grant as part of the Investissement d'avenir project, reference ANR-11-LABX-0056-LMH, LabEx LMH.
We thank an associate editor and two anonymous referees for their insightful comments, which helped us improve the paper.
%"This work was supported by a public grant as part of the Investissement d'avenir project, reference ANR-11-LABX-0056-LMH, LabEx LMH.
%
%
%\bibliographystyle{abbrv}
%\bibliography{bibliography_clval}

\pagebreak

\appendix

\section{Auxiliary results}\label{sec:AuxiliaryLemmas}
\begin{lemma}
\label{lemmaBoundingqvalue}
Recall the definition \cref{eqn:def:chi} of $\chi$ are recall we write $r(w,t)=wt(1-w)^{-1}(1-t)^{-1}$. For universal constants $c,c'>0$, for all $t\in(0,1)$, there exists $\omega_0(t)$ such that for 
	$w\leq \omega_0(t)$, 
	\begin{align}
		\tilde{m}(w)\left(1+ c \frac{\log\log (1/w)}{\log (1/w)}\right) \leq 2 \overline{G}(\chi(r(w,t))) \leq \tilde{m}(w)\left(1+ c' \frac{\log\log (1/w)}{\log (1/w)}\right).
	\end{align} 
\end{lemma}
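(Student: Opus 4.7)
The strategy is to compare both sides of the claim with their behaviour at the critical levels $y := \chi(r(w,t))$ and $\zeta := \zeta(w) = \beta^{-1}(1/w)$. These levels satisfy $y^2 - 2\log(1/w) = O(1)$ while $\zeta^2 - 2\log(1/w) = 2\log\log(1/w) + O(1)$, and both $2\overline{G}(y)$ and $\tilde{m}(w)$ behave, to leading order, like $2(2\pi)^{-1/2}$ divided by the corresponding level, so that the ratio $2\overline{G}(y)/\tilde{m}(w)$ reduces to $\zeta/y$ up to small corrections, and a Taylor expansion yields the claimed bounds.

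First I would pin down the two critical levels precisely. Using the Mills-ratio expansion $\overline{\Phi}(y) = \phi(y)y^{-1}(1+O(y^{-2}))$ and the explicit formula \eqref{eqn:def:gInQuasiCauchy} (which gives $\overline{G}(y) = (2\pi)^{-1/2}y^{-1}(1 + O(e^{-y^2/2}))$ by direct integration), the defining identity $\overline{\Phi}(y)/\overline{G}(y) = r(w,t)$ becomes $e^{-y^2/2} = r(w,t)(1 + O(y^{-2}))$, so
\[ \tfrac{1}{2}y^2 = \log(1/r(w,t)) + O(1/\log(1/w)) = \log(1/w) + O_t(1), \]
with the $O_t(1)$ absorbing $\log((1-t)(1-w)/t)$. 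For $\zeta$, the equation $g(\zeta)/\phi(\zeta) = 1 + 1/w$ rearranges to $\zeta^2 e^{-\zeta^2/2} = (w/(1+w))(1 - e^{-\zeta^2/2})$, and bootstrapping from $\zeta^2 \sim 2\log(1/w)$ gives
\[ \tfrac{1}{2}\zeta^2 = \log(1/w) + \log\log(1/w) + \log 2 + O(\log\log(1/w)/\log(1/w)). \]
Subtracting, $\zeta^2 - y^2 = 2\log\log(1/w) + C(t) + o(1)$; for $w \leq \omega_0(t)$ with $\log\log(1/\omega_0(t))$ large compared to $|C(t)|$, the leading $2\log\log(1/w)$ dominates.

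The second step relates $\tilde{m}(w)$ to $\overline{G}(\zeta)$. Writing $\beta/(1+w\beta) = \beta - w\beta^2/(1+w\beta)$ and using that $\int(g-\phi)\,dx = 0$, one obtains the identity $\tilde{m}(w) = w\int (g-\phi)^2/((1-w)\phi + wg)\,dx$. On $|x|>\zeta$ the integrand equals $g(x)(1 + O(\phi(x)/g(x)))$ while on $|x|\leq \zeta$ it is bounded by $O(w\phi(x))$, hence
\[ \tilde{m}(w) = 2\overline{G}(\zeta)\bigl(1 + O(\log\log(1/w)/\log(1/w))\bigr), \]
a quantitative version of the asymptotic already encoded in \cref{lem:ResultsFromCR18}. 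Combining with $\overline{G}(y) = (2\pi)^{-1/2}y^{-1}(1 + O(e^{-y^2/2}))$ and the corresponding expansion at $\zeta$ yields
\[ \frac{2\overline{G}(y)}{\tilde{m}(w)} = \frac{\zeta}{y}\bigl(1 + O(\log\log(1/w)/\log(1/w))\bigr). \]
Finally, Taylor expanding, $\zeta/y = 1 + (\zeta^2-y^2)/(2y^2) + O((\log\log/\log)^2)$, and inserting $\zeta^2 - y^2 = 2\log\log(1/w) + O_t(1)$ and $y^2 = 2\log(1/w)(1+o(1))$ yields $\zeta/y - 1 = \log\log(1/w)/(2\log(1/w))(1 + o_t(1))$, so for $w \leq \omega_0(t)$ one obtains both bounds with universal constants, say $c = 1/4$ and $c' = 1$.

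\textbf{Main obstacle.} The delicate point is the second step: establishing $\tilde{m}(w) = 2\overline{G}(\zeta)(1 + O(\log\log(1/w)/\log(1/w)))$ with sufficient sharpness that the $t$-dependence is confined to the threshold $\omega_0(t)$ rather than to the constants $c,c'$. The rest of the argument is a comparison of Gaussian-type tails at two levels differing by the ratio $\sqrt{1 + \log\log(1/w)/\log(1/w) + \cdots}$, which is routine once the critical levels are controlled.
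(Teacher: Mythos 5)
Your proposal follows the same overall strategy as the paper's proof --- reduce to comparing $2\overline{G}(\chi(r(w,t)))$ against $\tilde{m}(w)\approx 2\overline{G}(\zeta(w))$, then quantify the gap between the two thresholds $\chi(r(w,t))$ and $\zeta(w)$ --- with two mild variations: you compute $\zeta^2-\chi^2$ explicitly rather than citing a bound on $\zeta-\chi$ (both routes are fine and give the same $2\log\log(1/w)+O_t(1)$), and you compare $\overline{G}$ at the two levels via the ratio $\zeta/y$ rather than via the increment $\int_\chi^\zeta g$. Neither change affects the outcome.

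There is, however, one genuine gap, and it is in the middle step. You write $\tilde{m}(w)=w\int (g-\phi)^2/((1-w)\phi+wg)\,dx$ (correct) and then claim that the integrand equals $g(x)(1+O(\phi(x)/g(x)))$ on $|x|>\zeta$ and is $O(w\phi(x))$ on $|x|\le\zeta$. Both bounds break down near $|x|=\zeta$. At $x=\zeta$ we have $\phi/g=w/(1+w)$ and the integrand equals
\[
\frac{g(1-\phi/g)^2}{1+(1-w)\phi/(wg)} = \frac{g}{2(1+w)},
\]
so on the outside of the threshold the integrand is roughly $g/2$, not $g(1+O(\phi/g))$; and approaching $\zeta$ from the inside the integrand is of order $\phi/w$, not $O(w\phi)$. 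The relative error in your claimed approximation $\tilde{m}(w)\approx 2\overline{G}(\zeta)(1+O(w))$ is therefore not supported by the bounds you state --- the transition region $|x|\asymp\zeta$ contributes a correction of order $\zeta^{-2}\asymp 1/\log(1/w)$, not $O(w)$. The conclusion you want, $\tilde{m}(w)=2\overline{G}(\zeta(w))(1+O(1/\log(1/w)))$, is nonetheless true, but it should be taken from the two-sided bound in \cref{lem:ResultsFromCR18} (``Proof of Lemma~S-23'': $2\overline{G}(\zeta)(1-C_2\zeta^{-3})\le\tilde{m}(w)\le 2\overline{G}(\zeta)+C_1\zeta^{-3}$), which is exactly what the paper's proof cites. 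With that citation replacing your flawed derivation, the rest of your argument goes through.
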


\begin{proof}[Proof of Lemma~\ref{lemmaBoundingqvalue}]
	The proof relies on the following inequalities (see \cref{lem:ResultsFromCR18}): for universal constants $C_1,C_2>0$ and $w$ small enough,
	\begin{align}
	2\overline{G}(\zeta(w)) (1-C_2 \zeta(w)^{-3} ) \leq	\tilde m(w) \leq C_1 \zeta(w)^{-3} +2\overline{G}(\zeta(w)).\label{eq:uppermtilde}
	\end{align}
     Let us now prove the lower bound. By 	\cref{lem:ResultsFromCR18}, for a universal constant $c_1>0$, and $w$ small enough (smaller than a threshold that might depend on $t$),
	$
	\zeta(w)- \chi(r(w,t)) \geq  c_1 \frac{\log\log(1/w)}{ \zeta(w)}.
	$
	Hence, since $g$ in nonincreasing on a vicinity of $+\infty$, we have for $w$ small enough
	\begin{align*}
		\overline{G}(\chi(r(w,t)))-\overline{G}(\zeta(w)) 
		&=\int_{\chi(r(w,t))}^{\zeta(w)} g(u)du\\
		&\geq  \left(\zeta(w)- \chi(r(w,t))\right) g(\zeta(w))\\
		&\geq c'_1 \frac{\log\log(1/w)}{ \zeta^3(w)},
	\end{align*}
	for a universal constant $c'_1>0$.
	Combining the last display with \eqref{eq:uppermtilde} leads to
	\begin{align*}
		\tilde m(w) &\leq C \zeta(w)^{-3} +2\overline{G}(\chi(r(w,t))) - 2c'_1 \frac{\log\log(1/w)}{ \zeta^3(w)}\\
		&\leq 2\overline{G}(\chi(r(w,t))) - c'_1 \frac{\log\log(1/w)}{ \zeta^3(w)},
	\end{align*}
	for $w$ small enough. The lower bound now follows from  $\tilde{m}(w)\asymp 1/\zeta(w)$ and $\zeta(w)\asymp (\log (1/w))^{1/2}$ (see \cref{lem:ResultsFromCR18}).
	
	For the upper bound part, we proceed similarly:  let us first prove that, for an universal constant $c_2>0$, for $w$ small enough (smaller than a threshold that might depend on $t$),
	\begin{align}
		\zeta(w)- \chi(r(w,t)) \leq  c_2 \frac{\log\log(1/w)}{ \zeta(w)}.\label{eq:zetachi}
	\end{align}
	This comes from \cref{lem:ResultsFromCR18}: for $w$ small enough,
	\begin{multline*}
		\zeta(w)^2- \chi(r(w,t))^2
		\leq 2\log (1/w) + 2 \log\log (1/w)  - 2\log ((1-w)(1-t)/(tw))\\  + \log(\log((1-w)(1-t)/(tw)))  +C+C'
		\leq 4 \log\log (1/w).
	\end{multline*}
	 This leads to \eqref{eq:zetachi}. 
	Now, proceeding as for the lower bound, we have
	\begin{align*}
		\overline{G}(\chi(r(w,t)))-\overline{G}(\zeta(w)) 
		&=\int_{\chi(r(w,t))}^{\zeta(w)} g(u)du\\
		&\leq  \left(\zeta(w)- \chi(r(w,t))\right) g(\chi(r(w,t)))\\
		&\leq c'_2 \frac{\log\log(1/w)}{ \zeta^3(w)},
	\end{align*}
	Combining the latter with \eqref{eq:uppermtilde} gives
	\begin{align*}
		2\overline{G}(\chi(r(w,t))  \leq \tilde m(w) (1-C_2 \zeta(w)^{-3} )^{-1} +  2c'_2 \frac{\log\log(1/w)}{ \zeta^3(w)},
	\end{align*}
	which implies the upper bound. \qedhere

\end{proof}

\begin{lemma}\label{lem:w+/w-to1}
	Define $w_\pm$ as in \cref{eqn:def:w-,eqn:def:w+}, define $\lambda_+$ as in \cref{eqn:def:lambda+}, and recall the definitions \cref{eqn:def:nu,eqn:def:rho} of $\nu_n,\rho_n$ and \cref{eqn:def:r} of $r$. Then
	\begin{equation*}\label{eqn:w+/w-to1} w_+/w_- - 1 =O(\max(\nu_n,\rho_n)), \quad \frac{r(w_+,\lambda_+)}{r(w_-,\lambda_+)}-1 = O(\max(\nu_n,\rho_n)).\end{equation*} 
\end{lemma}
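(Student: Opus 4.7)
The plan is to prove the first bound $w_+/w_- - 1 = O(\max(\nu_n, \rho_n))$ via a mean value theorem argument applied to the defining equations \cref{eqn:def:w-,eqn:def:w+}, and to derive the second bound on the ratio of $r$-values by direct computation using the explicit formula $r(w,\lambda) = w\lambda/((1-w)(1-\lambda))$.

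For the first bound, I will introduce $\Sigma(w) := \sum_{i\in S_0} m_1(\theta_{0,i},w)$ and $H(w) := \log(\Sigma(w)/\tilde{m}(w))$. By \cref{eqn:def:w-,eqn:def:w+}, one has $H(w_-) - H(w_+) = \log\bigl((1+\nu_n)/(1-\nu_n)\bigr) = 2\nu_n + O(\nu_n^3)$, so by the mean value theorem there exists $\bar w\in(w_-,w_+)$ with $H'(\bar w)(w_- - w_+) = 2\nu_n + O(\nu_n^3)$. Next I will compute $H'(w) = \Sigma'(w)/\Sigma(w) - \tilde m'(w)/\tilde m(w)$ using $\Sigma'(w) = -\sum_{i\in S_0} m_2(\theta_{0,i},w)$ and $\tilde m'(w) = m_2(0,w)$, where $m_2(\tau,w) := E_{\theta_{0,1}=\tau}[\beta(X_1)^2/(1+w\beta(X_1))^2]$. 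The bounds $m_1(\theta_{0,i},w)\asymp 1/w$ from \cref{lem:m1boundsAnyBoundary}, combined with $m_2(\theta_{0,i},w)\asymp 1/w^2$ (Cauchy-Schwarz $m_2\geq m_1^2$ for the lower bound, \cref{lem:ResultsFromCR18} for the upper bound) and $\Sigma(w)\asymp s_n/w$ from \cref{lem:ExistsW+-}, give $\Sigma'(w)/\Sigma(w)\asymp -1/w$. Meanwhile $\tilde m'(w)/\tilde m(w) = O(1/(w\log(1/w)))$ follows from $\tilde m(w)\asymp (\log(1/w))^{-1/2}$ (also in \cref{lem:ResultsFromCR18}), which is $o(1/w)$. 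Hence $|H'(\bar w)|\asymp 1/\bar w$, so the MVT identity yields $|w_+ - w_-|\asymp \nu_n\bar w$. Since $\bar w\asymp w_-$ by \cref{lem:ExistsW+-}, we conclude $w_+/w_- - 1 = O(\nu_n)\subseteq O(\max(\nu_n,\rho_n))$, proving the first claim.

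For the second bound I will expand $r(w_+,\lambda_+)/r(w_-,\lambda_+) = (w_+/w_-)\cdot(1-w_-)/(1-w_+)$. The first factor is $1 + O(\max(\nu_n,\rho_n))$ by the first claim. The second factor equals $1 + (w_+-w_-)/(1-w_+) = 1 + O(w_-\max(\nu_n,\rho_n))$, which is $1 + o(\max(\nu_n,\rho_n))$ since $w_-\to 0$. Multiplying yields the second claim.

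The main obstacle will be securing the two-sided estimate $|H'(w)|\asymp 1/w$ uniformly over the relevant $w$-range and the strong signal class. The lower bound on $m_2$ via Cauchy-Schwarz is straightforward given the lower bound $m_1\geq 1/(2w)$ for strong signals; the matching upper bound on $m_2$, the asymptotic for $\tilde m$, and the control of $\tilde m'/\tilde m$ all rely on the bundle of analytic estimates in the auxiliary \cref{lem:ResultsFromCR18}. Uniformity over $\ell_0(s_n;v_n)$ is ensured because these bounds depend on $\theta_{0,i}$ only through the lower bound on $|\theta_{0,i}|$, which is the same across all signals in the class.
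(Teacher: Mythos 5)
Your proof is correct, and for the first bound it follows a genuinely different route from the paper. The paper proceeds by direct algebra: it inserts the two-sided bound $(1-\rho_n)w^{-1}\leq m_1(\theta_{0,i},w)\leq w^{-1}$ from \cref{lem:m1boundsAnyBoundary} into the defining equations \cref{eqn:def:w-,eqn:def:w+}, divides the resulting inequalities to obtain $w_+\tilde{m}(w_+)/(w_-\tilde{m}(w_-))\leq (1-\nu_n)^{-1}(1+\nu_n)(1-\rho_n)^{-1}$, and then uses $w_+\geq w_-$ together with the monotonicity of $\tilde{m}$ to conclude $w_+/w_--1\leq w_+\tilde{m}(w_+)/(w_-\tilde{m}(w_-))-1=O(\max(\nu_n,\rho_n))$. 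The $\rho_n$ enters because the bound on $m_1$ is applied at different values of $w$ on the two sides. Your mean-value-theorem argument avoids that loss: by working with $H(w)=\log(\Sigma(w)/\tilde{m}(w))$ and showing $|H'(w)|\asymp 1/w$ uniformly on $[w_-,w_+]$ (using $m_2(\tau,w)\asymp 1/w^2$ for strong signals, the upper bound coming from \cref{lem:ResultsFromCR18} and the lower from Jensen, plus $\tilde{m}'/\tilde{m}\lesssim 1/(w\log(1/w))=o(1/w)$), you obtain the strictly sharper estimate $w_+/w_--1=O(\nu_n)$, which of course implies what is claimed. The price is that you need differentiability of $m_1(\tau,\cdot)$ and $\tilde{m}$ plus two-sided control of the derivative; the paper's route is shorter since it requires only the crude $m_1$ bounds already proved. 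Your derivation of the second bound from the first is the same as the paper's.

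One small point worth flagging in a write-up: the MVT requires $w_-<w_+$ strictly, which holds here since $\Sigma/\tilde{m}$ is strictly decreasing and the two defining equations have distinct right-hand sides $(1\pm\nu_n)(n-s_n)$; and uniformity of the estimate $|H'(\bar w)|\asymp 1/\bar w$ over $\theta_0\in\ell_0(s_n;v_n)$ rests on the observation that \cref{lem:m1boundsAnyBoundary} and the $m_2$ bounds in \cref{lem:ResultsFromCR18} depend on $\theta_{0,i}$ only through the signal-strength lower bound, as you note.
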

\begin{proof}
We have $w_+\geq w_-$ (\cref{lem:ExistsW+-}), hence we focus on bounding $w_+/w_- -1$ from above. Since \cref{lem:ExistsW+-} also implies that $\log(1/w_-)\asymp \log(1/w_+)\asymp \log(n/s_n)$, we use \cref{lem:m1boundsAnyBoundary} to bound $m_1$ in the definitions \cref{eqn:def:w-,eqn:def:w+} of $w_-$ and $w_+$, and deduce that \begin{align*} (1-\nu_n)(n-s_n)w_+\tilde{m}(w_+)&\leq s_n,\\ (1+\nu_n)(n-s_n)w_-\tilde{m}(w_-)&\geq s_n (1-\rho_n).\end{align*}
Taking the ratio, we deduce that
\[ \frac{w_+\tilde{m}(w_+)}{w_-\tilde{m}(w_-)}\leq (1-\nu_n)^{-1}(1+\nu_n)(1-\rho_n)^{-1}.\]
Then, since $w_+\geq w_-$ and $\tilde{m}$ is increasing, we see that
\[ \frac{w_+}{w_-}-1\leq \frac{w_+\tilde{m}(w_+)}{w_-\tilde{m}(w_-)}-1= O(\max(\nu_n,\rho_n)),\] as claimed.

Finally, since $w_+,w_-\to 0$, we deduce that \[\frac{1-w_-}{1-w_+}-1 = \frac{(w_+/w_-)-1}{(1-w_+)/w_-} = o(w_+/w_- - 1),\] hence
	\[ \frac{r(w_+,\lambda_+)}{r(w_-,\lambda_+)}-1 = O\brackets[\Big]{\max\brackets[\Big]{ \frac{w_+}{w_-}-1,\frac{1-w_-}{1-w_+}-1}}= O(\max(\nu_n,\rho_n)).\qedhere \] 
\end{proof}

\begin{lemma}\label{lem:xi+-differencebounded}
Define $w_\pm,\lambda_\pm,\xi,r$ as in \cref{eqn:def:w-,eqn:def:w+,eqn:def:lambda-,eqn:def:lambda+,eqn:def:xi,eqn:def:r}. Then
\[\xi(r(w_-,\lambda_+))^2-\xi(r(w_+,\lambda_+))^2 = O(1)\]
\end{lemma}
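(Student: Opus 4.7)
Write $r_{\pm} := r(w_{\pm}, \lambda_{+})$ and $\xi_{\pm} := \xi(r_{\pm})$. Since $r(\cdot, \lambda_+)$ is increasing and $\xi$ is strictly decreasing (see \cref{lem:monotonicity}), we have $\xi_{-} \geq \xi_{+} > 0$. Combining \cref{lem:ExistsW+-,lem:ExistsLambda+-}, $r_{\pm} \asymp (s_n/n)(\log(n/s_n))^{3/2}$, so in particular $r_\pm \to 0$, $\log(1/r_{\pm}) \asymp L := \log(n/s_n)$, and $\log\log(1/r_\pm) \asymp \log\log(n/s_n)$.

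My plan is first to derive the sharp asymptotic expansion
\[ \xi(u)^2 = -2\log u + 2\log\log(1/u) + 2\log 2 + O\bigl(\log\log(1/u)/\log(1/u)\bigr) \quad \text{as } u \to 0, \]
and then to subtract this at $u = r_{-}$ and $u = r_{+}$. To obtain the expansion I would use the definition $(\phi/g)(\xi(u)) = u$ together with the explicit formula \cref{eqn:def:gInQuasiCauchy} for $g$: setting $y = \xi(u)^2$, the identity $y e^{-y/2}/(1 - e^{-y/2}) = u$ simplifies algebraically to $y = 2\log(1 + y/u)$, equivalently $y = -2\log u + 2\log y + 2\log(1 + u/y)$. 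Using the crude bound $y \asymp \log(1/u)$ (immediate from this equation, or from \cref{lem:ResultsFromCR18}), the term $2\log(1 + u/y)$ is exponentially negligible, and substituting $2\log y = 2\log\log(1/u) + 2\log 2 + O(\log\log(1/u)/\log(1/u))$ into the middle term yields the displayed expansion after one iteration.

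Applying the expansion at $u = r_{\pm}$ and subtracting gives
\[ \xi_{-}^2 - \xi_{+}^2 = 2\log(r_{+}/r_{-}) + 2\log\bigl(\log(1/r_{-})/\log(1/r_{+})\bigr) + O(\log\log(n/s_n)/\log(n/s_n)). \]
By \cref{lem:w+/w-to1}, $r_{+}/r_{-} = 1 + O(\max(\nu_n, \rho_n)) = 1 + o(1)$, so the first term is $o(1)$. For the second, $\log(1/r_{-}) - \log(1/r_{+}) = \log(r_{+}/r_{-}) = o(1)$ while $\log(1/r_{\pm}) \asymp L$, so the ratio inside the log is $1 + o(1/L)$ and the second term is $o(1/L)$. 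The third term is $o(1)$ trivially. Summing, $\xi_{-}^2 - \xi_{+}^2 = o(1) = O(1)$, as required.

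The main obstacle is obtaining the sharpness of the expansion of $\xi(u)^2$: the crude bounds from \cref{lem:ResultsFromCR18}, namely $\sqrt{2\log(1/u)} \leq \xi(u) \leq \sqrt{2\log(1/u) + 2\log\log(1/u) + 6\log 2}$, lead only to the insufficient estimate $\xi_{-}^2 - \xi_{+}^2 = O(\log\log(n/s_n))$. What is essential is the cancellation of the $2\log\log(1/u)$ correction and of the constant $2\log 2$ when subtracting at $r_{-}$ and $r_{+}$, a cancellation that is only exposed once the $O(\log\log(1/u)/\log(1/u))$ remainder has been controlled.
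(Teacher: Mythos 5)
Your proof is correct, and the sharp expansion $\xi(u)^2 = 2\log(1/u) + 2\log\log(1/u) + 2\log 2 + O(\log\log(1/u)/\log(1/u))$ you derive from the functional equation $\xi(u)^2 = 2\log\bigl(1 + \xi(u)^2/u\bigr)$ is right; it even yields the slightly stronger conclusion $\xi_-^2 - \xi_+^2 = o(1)$. However, the route is longer than the paper's because of a misreading of the available tools: you assert that \cref{lem:ResultsFromCR18} gives only $\sqrt{2\log(1/u)} \leq \xi(u) \leq \sqrt{2\log(1/u)+2\log\log(1/u)+6\log 2}$, which would indeed leave an uncancelled $O(\log\log(n/s_n))$ term. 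In fact the restatement of Lemma S-12 of \cite{CR18} in \cref{lem:ResultsFromCR18} also includes the matching lower bound $\xi(u) \geq \bigl(2\log(1/u)+2\log\log(1/u)+2\log 2\bigr)^{1/2}$. With that, one subtracts directly to get $0 \leq \xi_-^2-\xi_+^2 \leq 2\log(r_+/r_-) + 2\log\log(1/r_-) - 2\log\log(1/r_+) + 4\log 2$, and the first two terms are $o(1)$ via \cref{lem:w+/w-to1} exactly as in your final paragraph, so the $4\log 2$ is the only $O(1)$ contribution. In short, the $\log\log$ cancellation you correctly identified as the crux is already packaged in the quoted lemma; your from-scratch asymptotic expansion is a valid (and somewhat sharper) substitute for it but is not needed, and what you present as the ``main obstacle'' is actually absent.
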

\begin{proof}
Write $r_\pm=r(w_\pm,\lambda_+)$ and $\xi_\pm=\xi(r_\pm)$. \Cref{lem:ResultsFromCR18} gives us the near matching upper and lower bounds on $\xi$ that for $u\in(0,1)$ small enough we have 
\begin{align*} \xi(u) &\leq (2\log(1/u)+2\log\log(1/u)+6\log 2)^{1/2}, \\
\xi(u) &\geq (2\log(1/u)+2\log\log(1/u) + 2\log 2)^{1/2}.\end{align*}
Using these bounds and monotonicity of $\xi:=(\phi/g)^{-1}$ (which follows from the fact that of $\phi/g$ is decreasing on $x\geq 0$ as in \cref{lem:monotonicity}) we deduce that
\begin{equation}\label{eqn:xi_-^2-xi_+^2}
	0\leq \xi_-^2-\xi_+^2  \leq 2\log\brackets[\Big]{\frac{r_+}{r_-}} + 2\log \log (1/r_-)-2\log\log(1/r_+) +4\log 2.
\end{equation}
Observe that
\[ \log\log(1/r_-)-\log\log(1/r_+) = \log\brackets[\Big]{\frac{\log(1/r_-)}{\log(1/r_+)}}=\log\brackets[\Big]{1+\frac{\log(r_+/r_-)}{\log(1/r_+)}}.\]
Using the standard bound $\log(1+x)\leq x$ for $x>-1$ and the fact that $r_+\to 0$ (by \cref{lem:ExistsW+-,lem:ExistsLambda+-}), this last expression is upper bounded by \[ \frac{\log(r_+/r_-)}{\log(1/r_+)}=o(\log(r_+/r_-)),\]
and, using \cref{lem:w+/w-to1}, we similarly have
\[ \log(r_+/r_-)\leq \frac{r_+}{r_-}-1= O(\max(\nu_n,\rho_n))=o(1).\] 
Inserting into \cref{eqn:xi_-^2-xi_+^2} we see that $\xi_-^2-\xi_+^2=O(1)$,
as claimed.
\end{proof}

\begin{lemma} \label{lem:m1boundsAnyBoundary}
There exist constants $\omega_0\in (0,1)$ and $c,c'>0$ such that for any sequence $s_n/n\to 0$ and $v_n\to \infty$, 
		for all $\theta_0\in \ell_0(s_n,v_n)$, for any $i$ such that $\theta_{0,i}\neq 0$, we have for any $w\in[s_n/n,\omega_0]$,
		\begin{align} \label{eqn:m1bounds} %\tfrac{1}{2}w^{-1} \leq 
			(1-\rho_n)w^{-1} \leq m_1(\theta_{0,i},w ) &\leq w^{-1}, \\ \label{eqn:mtildebounds} c(\log (1/w))^{-1/2}\leq \tilde{m}(w)&\leq c'(\log (1/w))^{-1/2},\end{align}
		where we recall that $\rho_n=e^{-v_n^2/9}$ as in \cref{eqn:def:rho}.	
\end{lemma}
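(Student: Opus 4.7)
The plan is to separately handle the $m_1$ bounds \eqref{eqn:m1bounds} and the $\tilde m$ bounds \eqref{eqn:mtildebounds}, treating the latter first since it follows directly from known asymptotics. Specifically, \cref{lem:ResultsFromCR18} provides $\tilde m(w)\asymp 1/\zeta(w)\asymp(\log(1/w))^{-1/2}$ as $w\to 0$; shrinking $\omega_0$ if necessary to make this asymptotic valid on $(0,\omega_0]$ and using that $\log(1/w)\geq\log(1/\omega_0)>0$ for all $w\in[s_n/n,\omega_0]$, I obtain \eqref{eqn:mtildebounds} with constants $c,c'$ depending only on $\omega_0$.

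For the $m_1$ bounds I would start from the identity $w\beta/(1+w\beta)=1-1/(1+w\beta)$, which yields
\[m_1(\tau,w)=w^{-1}\bigl(1-E_\tau[1/(1+w\beta(X_1))]\bigr).\]
The upper bound $m_1(\tau,w)\leq 1/w$ then follows immediately from nonnegativity of the integrand, which holds because $\beta\geq-1$ and $w\leq\omega_0<1$ give $1+w\beta\geq 1-w\geq 1-\omega_0>0$. The lower bound reduces to showing $E_\tau[1/(1+w\beta(X_1))]\leq\rho_n$ whenever $|\tau|\geq\sqrt{2\log(n/s_n)}+v_n$. Using evenness of $\beta$ I would assume $\tau>0$ and split the expectation at $x_0:=\tau-v_n/2\geq\sqrt{2\log(n/s_n)}+v_n/2$. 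On $\{X_1\leq x_0\}$ my plan is to use the crude bound $1/(1+w\beta)\leq 1/(1-\omega_0)$ together with the Gaussian tail estimate $P_\tau(X_1\leq x_0)\leq\overline\Phi(v_n/2)\lesssim v_n^{-1}e^{-v_n^2/8}$. On $\{X_1>x_0\}$ I would exploit $1+w\beta(X_1)=(1-w)+w(g/\phi)(X_1)\geq w(g/\phi)(X_1)$ to write $1/(1+w\beta(X_1))\leq\phi(X_1)/(wg(X_1))$, then invoke monotonicity of $\phi/g$ on $[0,\infty)$ from \cref{lem:monotonicity} and the explicit expression $(\phi/g)(x)=x^2/(e^{x^2/2}-1)\leq 2x^2e^{-x^2/2}$ to bound this contribution by $2x_0^2 e^{-x_0^2/2}/w$.

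Substituting the lower bound on $x_0^2$, applying the elementary inequality $v_n\sqrt{2\log(n/s_n)}\leq\log(n/s_n)+v_n^2/2$, and absorbing one factor of $n/s_n$ via $w\geq s_n/n$, I would arrive at the estimate
\[E_\tau[1/(1+w\beta(X_1))]\lesssim v_n^{-1}e^{-v_n^2/8}+(\log(n/s_n)+v_n^2)e^{-(v_n/2)\sqrt{2\log(n/s_n)}-v_n^2/8}.\]
After dividing by $\rho_n=e^{-v_n^2/9}$, the first summand becomes $v_n^{-1}e^{-v_n^2/72}\to 0$, while the second becomes $(\log(n/s_n)+v_n^2)\exp\{-(v_n/2)\sqrt{2\log(n/s_n)}-v_n^2/72\}$. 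The hard part will be verifying that this last expression tends to zero for every sequence $v_n\to\infty$ and every $s_n/n\to 0$. I plan to handle this by distinguishing two regimes: when $v_n^2\leq\log(n/s_n)$, the ratio $v_n\sqrt{\log(n/s_n)/2}/\log\log(n/s_n)\to\infty$ as $v_n\to\infty$ and $\log(n/s_n)\to\infty$, so the first exponential term dominates the polynomial prefactor; when $v_n^2>\log(n/s_n)$, the $v_n^2/72$ term alone already dominates $\log(\log(n/s_n)+v_n^2)\lesssim\log v_n$. The delicate point is that the split location $v_n/2$ must be tuned so that \emph{both} the $e^{-(v_n/2)\sqrt{2\log(n/s_n)}}$ and $e^{-v_n^2/8}$ factors survive simultaneously, since either one alone would fail against $e^{-v_n^2/9}$ for $v_n$ respectively much smaller or much larger than $\sqrt{\log(n/s_n)}$.
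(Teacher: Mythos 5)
Your proposal is correct, and the overall strategy is the same as the paper's (split an integral at a location of order $\sqrt{2\log(n/s_n)}+v_n/2$ and use a Gaussian tail bound on one side, the smallness of $1/(1+w\beta)$ on the other); the algebraic setup differs in two places. First, you use the identity $m_1(\tau,w)=w^{-1}\bigl(1-E_\tau[(1+w\beta(X_1))^{-1}]\bigr)$ so that the upper bound $m_1\leq w^{-1}$ falls out of nonnegativity and the lower bound reduces cleanly to $E_\tau[(1+w\beta)^{-1}]\leq\rho_n$; the paper instead works with $w m_1$ directly as a sum of two integrals, invoking Lemma~S-21 of \cite{CR18} for the upper bound, and gets the lower bound from the product $\bigl(\tfrac{w\beta(a\zeta)}{1+w\beta(a\zeta)}\bigr)\bar\Phi(a\zeta-\mu)\geq(1-\rho_n/3)^2$ minus a small leftover. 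Second, your split point $x_0=\tau-v_n/2$ is calibrated to $\tau$, while the paper uses $a\zeta(w)$ with $a=1+v_n/(2\zeta(w))$, which requires a Taylor expansion $\zeta(s_n/n)-\sqrt{2\log(n/s_n)}=\Delta_n\to 0$ to verify $a\zeta(w)-\mu\leq\Delta_n-v_n/2$; your choice avoids this (the needed fact $w\geq s_n/n$ enters only to cancel $e^{-\log(n/s_n)}$ against $w^{-1}$). The two-regime verification you flag at the end is indeed the crux and it goes through as you describe: for $v_n^2\leq\log(n/s_n)$ the term $(v_n/2)\sqrt{2\log(n/s_n)}$ dominates $\log\log(n/s_n)$ since $\sqrt{L}\gg\log L$ once $v_n\geq 1$; for $v_n^2>\log(n/s_n)$ the term $v_n^2/72$ dominates $\log v_n$. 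The paper resolves the same tension implicitly through the adaptive choice of $a$ (which yields the extra factor $e^{v_n\zeta(w)/2}$ in $w\beta(a\zeta(w))$ that absorbs the analogous polynomial prefactor), so neither route is more delicate than the other; yours is arguably a bit cleaner and more self-contained, since it does not need S-21 or the $\Delta_n$ expansion.
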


\begin{proof}
\Cref{lem:ResultsFromCR18} tells us that $\tilde{m}(w)\asymp \zeta(w)^{-1}$ and $\zeta(w)\sim (2\log(1/w))^{-1/2}$, yielding \cref{eqn:mtildebounds}.
It also tells us, regarding $m_1$,  that there exists $c_1>0$ such that for all $x\in\RR$ and all $w\in(0,1],$ \begin{equation} \label{eqn:m_1bound} m_1(x,w)\leq \min(w,c_1)^{-1},\end{equation} 
	so that the upper bound in \cref{eqn:m1bounds} is immediate upon choosing $\omega_0=\min(c_1,1)$. 
	
	It remains to show the lower bound on $m_1$. This lower bound is a sharpening of Lemma~S-29 in \cite{CR18} and is proved similarly. By assumption, if, for some $i$, $\abs{\theta_{0,i}}\neq 0$, then we may assume by symmetry of $m_1$ that $\mu=\theta_{0,i}>0$ and we further have
	\[ \mu \geq \sqrt{2\log(n/s_n)}+v_n.\]
	Writing $p=p(n,w)=\frac{v_n}{\zeta(w)}$ and 	$a=1+0.5p$, using monotonicity of $\phi/g$ and hence of $\beta$ (\cref{lem:monotonicity}),
		we have for $w$ such that $w\abs{\beta(0)}<1/2$,  
		\begin{align*}
			w m_1(\mu,w) 
			& =  \int_{\abs{x}>a\zeta(w)} \frac{w \beta(x)}{1+w\be(x)} \phi(x-\mu)dx +\ \int_{-a\zeta(w)}^{a\zeta(w)} \frac{w \beta(x)}{1+w\be(x)} \phi(x-\mu)dx \\
			&\geq \int_{x>a\zeta(w)} \frac{w\beta(x)}{1+w\be(x)} \phi(x-\mu)dx -   \int_{-a\zeta(w)}^{a\zeta(w)} \phi(x-\mu)dx\\
			&\geq  \frac{w\beta(a\zeta(w))}{1+w\be(a\zeta(w))} \overline{\Phi}(a\zeta(w)-\mu) - (1-\overline{\Phi}(a\zeta(w)-\mu)).
		\end{align*}
		Increasingness of $\beta$ implies that $\zeta$ is decreasing, so that also using \cref{lem:ResultsFromCR18} and a Taylor expansion, we have, for some $\Delta_n\to 0$, \[\begin{split} a\zeta(w)-\mu\leq \zeta(w) -\sqrt{2\log(n/s_n)}- 0.5 v_n &\leq \zeta(s_n/n) -\sqrt{2\log(n/s_n)}- 0.5 v_n \\ &\leq  \Delta_n -0.5 v_n.\end{split}\] 
		By standard properties of $\bar{\Phi}$, including the tail bound $\bar{\Phi}(x)\asymp \phi(x)/x$, 
		\[1-\bar{\Phi}( \Delta_n -0.5 v_n)=\bar{\Phi}(    0.5 v_n-\Delta_n)\ll e^{-(0.5 v_n-\Delta_n)^2/2} \leq e^{-(0.5 v_n)^2 /2} e^{v_n\Delta_n/2 }  \ll \rho_n.\]
		In particular, we have, for $n$ large,
		\[1-\bar{\Phi}(a\zeta(w)-\mu)\leq 1-\bar{\Phi}(\Delta_n-0.5v_n)\leq \rho_n/3.\] Additionally,  $w\beta(a\zeta(w))=\beta(a\zeta(w))/\beta(\zeta(w))=((g/\phi)(a\zeta(w))-1)/((g/\phi)(\zeta(w))-1)$ tends quickly to infinity:
		\[
		w\beta(a\zeta(w))\gtrsim  \frac{g(a\zeta(w))}{g(\zeta(w))}  \:\frac{\phi(\zeta(w))}{\phi(a\zeta(w))} 
		\gtrsim  \frac{\phi(\zeta(w))}{\phi(a\zeta(w))} = e^{ (a^2-1) \zeta(w)^2/2 } \gg e^{ v_n^2 0.5^2/2 }\gg \rho_n^{-1}.
		\]
		In particular, we see that, for $n$ large, \[ \frac{w\beta(a\zeta(w))}{1+w\beta(a\zeta(w))} = 1- \frac{1}{1+w\beta(a\zeta(w))} \geq 1- \frac{1}{w\beta(a\zeta(w))} \geq 1-\rho_n/3.\]  Inserting these bounds we find that \[ wm_1(\mu,w)\geq (1-\rho_n/3)(1-\rho_n/3) -\rho_n/3 \geq 1- \rho_n. \qedhere\]
\end{proof}

The following two technical lemmas give precise bounds on $F_w(\lambda)$ and on $E_{\theta_0=0}[\ell_{1,w} \mid \ell_{1,w'}<\lambda]$ for suitable $w,w'$ and $\lambda$ which are essential to obtaining the convergence rates in \cref{thm:FDRConvergenceWithRate}.

\begin{lemma}\label{lem:AsymptoticsOfFwLambda}
	The function $F_w(\lambda)=P_{\theta_{0,i}=0}(\ell_{i,w}\leq \lambda)$ is continuous and strictly increasing in $\lambda$. Assume that $w=w_n$ and $\lambda=\lambda_n\in (0,1)$ satisfy $\lambda\to 1$ and $w/(1-\lambda)\to 0$. Then
	\[ F_w(\lambda)=2\bar{\Phi}(\xi(r(w,\lambda)))\asymp w(1-\lambda)^{-1} (\log( (1-\lambda)/w) )^{-3/2}\] as $n\to \infty$, where $\xi=(\phi/g)^{-1}$ and $r(w,t)=w(1-w)^{-1}t(1-t)^{-1}$.
	If in fact $w^c/(1-\lambda)\to 0$ for some $c<1$ then 
		\[ F_w(\lambda)\asymp w(1-\lambda)^{-1} (\log(1/w))^{-3/2}.\]
	
\end{lemma}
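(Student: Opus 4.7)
The strategy is to reduce the problem to a tail bound on the Gaussian distribution by rewriting the event $\{\ell_{i,w}\leq \lambda\}$ as a tail event for $|X_i|$, and then to compute the asymptotics of $\bar\Phi$ at the appropriate threshold using the explicit form of $g$.

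First, I would establish the exact identity $F_w(\lambda) = 2\bar\Phi(\xi(r(w,\lambda)))$. A direct rearrangement of the definition \cref{eqn:def:Lvals} shows that
\[ \ell(x;w)\leq \lambda \iff \frac{\phi}{g}(x) \leq \frac{w\lambda}{(1-w)(1-\lambda)} = r(w,\lambda),\]
and since $\phi/g$ is even and strictly decreasing on $[0,\infty)$ (cf.\ \cref{lem:monotonicity}), this is equivalent to $|x|\geq \xi(r(w,\lambda))$. Under $\theta_{0,i}=0$ we have $X_i\sim \mathcal{N}(0,1)$, so $F_w(\lambda) = 2\bar\Phi(\xi(r(w,\lambda)))$. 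Continuity and strict monotonicity of $F_w(\lambda)$ in $\lambda$ then follow from the same properties of $\lambda\mapsto r(w,\lambda)$, $\xi$, and $\bar\Phi$.

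Next I would derive the asymptotics. Write $r=r(w,\lambda)$; under the assumption $w/(1-\lambda)\to 0$ we have $r\asymp w/(1-\lambda)\to 0$ and hence $\xi(r)\to\infty$. By the standard Mills ratio estimate $\bar\Phi(x)\asymp \phi(x)/x$ for large $x$, together with the defining identity $\phi(\xi(r))=r\,g(\xi(r))$,
\[ \bar\Phi(\xi(r)) \asymp \frac{\phi(\xi(r))}{\xi(r)} = \frac{r\,g(\xi(r))}{\xi(r)}.\]
Using the explicit form \cref{eqn:def:gInQuasiCauchy}, $g(x)\sim (2\pi)^{-1/2}x^{-2}$ as $x\to\infty$, so $g(\xi(r))/\xi(r)\asymp \xi(r)^{-3}$. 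Combined with $\xi(r)\asymp (\log(1/r))^{1/2}$ (from \cref{lem:ResultsFromCR18}), this yields
\[ F_w(\lambda) = 2\bar\Phi(\xi(r)) \asymp \frac{r}{(\log(1/r))^{3/2}}.\]

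Finally I would translate the bound on $r$ back to $w$ and $\lambda$. Since $w\to 0$ and $\lambda\to 1$, we have $(1-w)\lambda\asymp 1$, whence $r\asymp w/(1-\lambda)$ and $\log(1/r) = \log((1-\lambda)/w)+O(1)$. The hypothesis $w/(1-\lambda)\to 0$ forces $\log((1-\lambda)/w)\to \infty$, so the $O(1)$ is negligible, and we obtain the first claimed estimate $F_w(\lambda)\asymp w(1-\lambda)^{-1}(\log((1-\lambda)/w))^{-3/2}$. For the second claim, the assumption $w^c/(1-\lambda)\to 0$ for some $c<1$ gives $(1-\lambda)\geq w^c$ eventually, so $(1-c)\log(1/w)\leq \log((1-\lambda)/w)\leq \log(1/w)$, showing that $\log((1-\lambda)/w)\asymp \log(1/w)$ and concluding the proof.

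The main obstacle is routine bookkeeping: ensuring the Mills-type estimate $\bar\Phi(x)\asymp\phi(x)/x$ and the asymptotic $g(x)\asymp x^{-2}$ are genuinely applicable (they are, in the regime $\xi(r)\to\infty$), and verifying that the residual $O(1)$ terms in passing between $r$, $w$, and $\lambda$ do not distort the logarithm. No serious difficulty arises, as all the needed ingredients about $\xi$, $g$, and $\bar\Phi$ are supplied by \cref{lem:ResultsFromCR18}.
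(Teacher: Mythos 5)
Your proposal matches the paper's own proof essentially step for step: both derive the identity $F_w(\lambda)=2\bar\Phi(\xi(r(w,\lambda)))$ from the threshold form of the event $\{\ell_{i,w}\le\lambda\}$, then apply the Mills ratio $\bar\Phi(x)\asymp\phi(x)/x$, the defining relation $\phi(\xi(r))=r\,g(\xi(r))$, the quadratic tail $g(x)\asymp x^{-2}$, and $\xi(u)\asymp(\log(1/u))^{1/2}$ to get $F_w(\lambda)\asymp r(\log(1/r))^{-3/2}$, finishing by comparing $\log(1/r)$ to $\log((1-\lambda)/w)$ and to $\log(1/w)$ under the extra hypothesis $w^c/(1-\lambda)\to 0$. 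The argument is correct and takes the same route as the paper.
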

\begin{proof}
	A direct calculation, as needed also in proving \cref{lem:monotonicity}, yields
	\begin{equation}\label{eqn:LvalsXi} \ell_{i,w}(X)\leq t \iff \abs{X_i}\geq \xi(r(w,t)), \end{equation} so that $F_{w}(x)=2\bar{\Phi}(\xi(r(w,x)))$ as claimed and hence $F_w$ is continuous. 
	
	Next, we use a standard Gaussian tail bound, %(see \cref{lem:ResultsFromCR18}),
	 the definition of $\xi$, the definition \cref{eqn:def:gInQuasiCauchy} of $g$ in the quasi-Cauchy case, the fact that $r(w,\lambda)\asymp w/(1-\lambda)$ as $w\to 0$ and $\lambda\to 1$, and the fact that $\xi(u)\asymp (\log(1/u))^{1/2}$ as $u\to 0$ (see \cref{lem:ResultsFromCR18}) to see that	\[ \begin{split} \bar{\Phi}(\xi(r(w,\lambda))\asymp \frac{\phi(\xi(r(w,\lambda))}{\xi(r(w,\lambda)}\asymp \frac{r(w,\lambda) g(\xi(r(w,\lambda))}{\xi(r(w,\lambda))} &\asymp r(w,\lambda)\xi(r(w,\lambda))^{-3} \\ &\asymp \frac{ w}{1-\lambda} \brackets[\Big]{\log\brackets[\Big]{\frac{1-\lambda}{w}}}^{-3/2},\end{split} \] as claimed.
 Note that $\log((1-\lambda)/w)\leq \log(1/w)$, and that when $w^c/(1-\lambda)\to 0$ we have $\log ((1-\lambda)/w)\gtrsim \log( 1/w^{1-c})\asymp \log(1/w). $
\end{proof}

\begin{lemma}\label{lem:ExpectationsSlowlyTo1}
	Suppose for sequences $w_1=w_{1,n},w_2=w_{2,n}$ and $\lambda=\lambda_n$ taking values in $[0,1]$ that $\lambda\to1$, that both $w_2/w_1$ and $w_1/w_2$ are bounded, and that $w_1^c/(1-\lambda)\to 0$ for some $c<1$. Then
%	
%	For all $c,c'>0$ with $c'\geq 1\geq c$, there exists constants $\omega_0,\omega_0'\in (0,1)$ both only depending on $c,c'$ such that for all  $\lambda\in[ 1- \omega_0,1]$, for all $w_1\in (0, (1-\lambda)\omega_0']$ and for all $w_2\in [cw_1,c'w_1]$, we have
%	\begin{equation*}
%		(1-\lambda) \log(1/(1-\lambda)) \frac{\log\brackets*{\frac{1-\lambda}{w_1}}}{\log\brackets*{1/w_1}}
%		\lesssim 1-E_{\theta_0=0}[\ell_{1,w_1}(X) \mid   \ell_{1,w_2}(X)<\lambda]\lesssim
%		(1-\lambda) \log(1/(1-\lambda)),
%	\end{equation*}	
	\begin{equation*}
	1-E_{\theta_0=0}[\ell_{1,w_1}(X) \mid   \ell_{1,w_2}(X)<\lambda]\asymp
	(1-\lambda) \log(1/(1-\lambda)),
\end{equation*}	
%	\begin{equation*}%\label{Equlem13generic}
%	w_1 + (1-\lambda) \log(1/(1-\lambda)) \frac{\log\brackets*{\frac{1-\lambda}{w_1}}}{\log\brackets*{1/w_1}}
%	\lesssim 1-E_{\theta_0=0}[\ell_{1,w_1}(X) \mid   \ell_{1,w_2}(X)<\lambda]\lesssim
%	(1-\lambda) \log(1/(1-\lambda)),
%\end{equation*}	
%If in fact $w_1^c/(1-\lambda)\to 0$ for some $c<1$, then the lower bound matches the upper bound, up to the values of the suppressed constants.
%
Let us also note here that for fixed $w_1,w_2$, $E_{\theta_0=0}[\ell_{1,w_1}(X)\mid \ell_{1,w_2}(X) <\lambda]$ is continuous in $\lambda$.
\end{lemma}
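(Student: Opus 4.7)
The plan is to reduce to an explicit integral and then compute its asymptotics by a careful cancellation. Using the characterisation $\{\ell_{1,w_2}(X)<\lambda\}=\{|X_1|>\tau\}$ with $\tau=\xi(r(w_2,\lambda))$ (cf.\ \eqref{eqn:LvalsXi}) together with the identity $(1-\ell(y;w_1))\phi(y)=w_1 g(y)/(1+w_1\beta(y))$ (immediate from the definitions), I would first rewrite
\[
1 - E_{\theta_0=0}[\ell_{1,w_1}\mid \ell_{1,w_2}<\lambda] = \frac{1}{\bar\Phi(\tau)}\int_\tau^\infty \frac{w_1 g(y)}{1+w_1\beta(y)}\,dy,
\]
and split the integration at the crossover $y^*=\zeta(w_1)$ where $w_1\beta(y^*)=1$. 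On $[\tau,y^*]$ the denominator $1+w_1\beta$ lies in $[1,2]$, so the integrand is $\asymp w_1 g(y)$ and the contribution is $\asymp w_1(\bar G(\tau)-\bar G(y^*))$; on $[y^*,\infty)$ one has $w_1\beta\asymp w_1 g/\phi\gg 1$, so the integrand is $\asymp \phi(y)$ and the contribution is $\asymp \bar\Phi(y^*)$.

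The delicate step is computing the first piece sharply. Using $g(y)\sim (2\pi)^{-1/2}y^{-2}$ for large $y$ (quasi-Cauchy form), one has $\bar G(\tau)-\bar G(y^*)=\int_\tau^{y^*}g(y)\,dy\sim (2\pi)^{-1/2}(y^*-\tau)/(\tau y^*)$. To pin down $y^*-\tau$, I would invoke the $\log\log$-precise expansion $\xi(u)^2=2\log(1/u)+2\log\log(1/u)+O(1)$ from \cref{lem:ResultsFromCR18}; combined with $w_1\asymp w_2$ and $r(w_2,\lambda)\asymp w_2/(1-\lambda)$, this yields $(y^*)^2-\tau^2 \asymp \log(1/(1-\lambda))$. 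The boundary hypothesis $w_1^c=o(1-\lambda)$ for some $c<1$ ensures $\tau^2\gtrsim (1/c-1)\log(1/(1-\lambda))$, so that $y^*\asymp \tau$, and hence
\[
y^*-\tau=\frac{(y^*)^2-\tau^2}{y^*+\tau}\asymp \frac{\log(1/(1-\lambda))}{\tau},\qquad \bar G(\tau)-\bar G(y^*)\asymp \frac{\log(1/(1-\lambda))}{\tau^3}.
\]
A parallel calculation gives $\bar\Phi(y^*)\asymp w_1/\tau^3$, smaller than the dominant piece by a factor $\log(1/(1-\lambda))$, so the numerator is $\asymp w_1\log(1/(1-\lambda))/\tau^3$. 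The denominator satisfies $\bar\Phi(\tau)\asymp \phi(\tau)/\tau \asymp w_2/((1-\lambda)\tau^3)$, from the standard tail bound and the defining relation $\phi(\tau)=r(w_2,\lambda)g(\tau)$ (using $g(\tau)\asymp \tau^{-2}$). Taking the ratio and using $w_1\asymp w_2$ delivers $\asymp (1-\lambda)\log(1/(1-\lambda))$.

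For continuity in $\lambda$ for fixed $w_1,w_2\in(0,1)$: the map $\lambda\mapsto\tau(\lambda)=\xi(r(w_2,\lambda))$ is continuous on $(0,1)$ (composition of continuous maps), the integrals defining the numerator and denominator in the displayed formula are continuous in $\tau$ (integrals of continuous integrable functions with a continuously varying lower limit), and $\bar\Phi(\tau)>0$ for $\lambda<1$; so the conditional expectation is continuous.

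The main obstacle is the cancellation in $\bar G(\tau)-\bar G(y^*)$: each term is individually of order $\tau^{-1}$, while their difference is of the much smaller order $\log(1/(1-\lambda))/\tau^3$. Capturing this requires \emph{both} the sharp leading asymptotic of $g$ and the $\log\log$-order precision of $\xi$; any cruder treatment produces the wrong order, e.g.\ yielding $(1-\lambda)\log(1/w_1)$, which under the sole hypothesis $w_1^c=o(1-\lambda)$ can differ from the true answer by a non-constant factor.
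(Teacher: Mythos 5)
Your proof is correct and follows essentially the same route as the paper: both reduce to an integral over $\{|X_1|>\xi(r(w_2,\lambda))\}$, split that integral at $\zeta(w_1)$ (where $w_1\beta=1$), and combine the $\log\log$-precise asymptotics of $\xi$ and $\zeta$ from \cref{lem:ResultsFromCR18} with the quasi-Cauchy tail of $g$ to capture the cancellation. The only presentational difference is that you work directly with the integrand $(1-\ell(y;w_1))\phi(y)=w_1 g(y)/(1+w_1\beta(y))$ and carry out the $\zeta(w_1)$-versus-$\xi(r(w_2,\lambda))$ comparison inline, whereas the paper works with the complementary quantity $h_{w_1}(x)=w_1\beta(x)\phi(x)/(1+w_1\beta(x))$ and packages the comparison in \cref{lem:zeta-xi} (applying it with $t$ chosen so that $r(w_1,t)=r(w_2,\lambda)$).
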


%\begin{lemma}[Generic version for Lemma~\ref{lem:ExpectationsConvergeSlowlyTo1}]\label{lemmakwekugeneric}
%For all $c,c'>0$ with $c'\geq 1\geq c$, there exists constants $\omega_0,\omega_0'\in (0,1)$ both only depending on $c,c'$ such that for all  $\lambda\in[ 1- \omega_0,1]$, for all $w_1\in (0, (1-\lambda)\omega_0']$ and for all $w_2\in [cw_1,c'w_1]$, we have
%\begin{equation}\label{Equlem13generic}
%w_1 + (1-\lambda) \log(1/(1-\lambda)) \frac{\log\left(\frac{1-\lambda}{w_1}\right)}{\log\left(1/w_1\right)}
%\lesssim 1-E_{\theta_0=0}[\ell_{1,w_1}(X)\:|\: \ell_{1,w_2}(X)<\lambda]\lesssim
%w_1 + (1-\lambda) \log(1/(1-\lambda)) 
%%\frac{\zeta(w_1(1-\lambda)^{-1})}{\zeta(w_1)} (1-\lambda)\log((1-\lambda)^{-1}) \lesssim 1-E_{\theta_0=0}[\ell_{1,w_1}(X)\:|\: \ell_{1,w_2}(X)<\lambda]\lesssim (1-\lambda)\log((1-\lambda)^{-1}),
%\end{equation}
%where $\ell_{1,w}(X)$ is the functional defined by \eqref{eqn:def:Lvals}.
%\end{lemma} 

\begin{proof}
	Recall the definitions $\beta(x)=\tfrac{g}{\phi}(x)-1$, $\zeta(w)=\beta^{-1}(1/w)$, $\xi=(\phi/g)^{-1}$, and recall that $\ell_{1,w}(X)<\lambda$ if and only if $|X_1|> \xi(r(w,\lambda))$, see \eqref{eqn:LvalsXi}. Using symmetry of the densities $\phi$ and $g$ we see that for all $w_1,w_2\in (0,1)$,
	\[
	E_{\theta_0=0}[\ell_{1,w_1}(X)\:|\: \ell_{1,w_2}(X)<\lambda]= \frac{\int_{\xi_{w_2}}^\infty \frac{(1-w_1)\phi(x)}{(1-w_1)\phi(x)+w_1g(x)}\phi(x)\dx}{\bar{\Phi}({\xi_{w_2}})}, \]
where we have introduced the notation $\xi_{w_2}:=\xi(r(w_2,\lambda))$. The expression on the right is continuous at any $\lambda$ such that the denominator is bounded away from zero, i.e.\ at any $\lambda\neq 0$, hence the same is true of the conditional expectation.

	Write $h_{w_1}(x)=w_1 \beta(x)\phi(x)/(1+w_1\beta(x))$. For $w_1,w_2$ small enough, the following bounds hold: 
	\begin{align*}
		\phi(x)/2 & \leq h_{w_1}(x)\leq \phi(x), && x\in [\zeta(w_1),\infty);\\
		w_1 g(x)/4 & \leq h_{w_1}(x) \leq w_1 g(x) && x\in [{\xi_{w_2}}, \zeta(w_1)].
	\end{align*}
	To obtain these inequalities we have used monotonicity of $\phi/g$ and hence $\beta$, and the fact that $\beta(\zeta(w))=1/w$. The first inequalities then follow from the expression $h_{w_1}(x)= \brackets{\frac{w_1\beta(x)}{1+w_1\beta(x)}}\phi(x)$, while the latter inequalities result from the expression $h_{w_1}(x)=w_1 g(x) \brackets{\frac{1-(\phi/g)(x)}{1+w_1\beta(x)}}$ and the fact that $(\phi/g)(x)\leq 1/2$ for $x$ large enough. 
	By assumption there exists $C>0$ such that $w_1\leq Cw_2$ for all $n$ large enough, and note that also $\lambda\geq C/(C+1)$ by further increasing $n$ if necessary. Recalling the relationship \cref{eqn:zeta-xi-relationship} and using that decreasingness of $\phi/g$ (\cref{lem:monotonicity}) implies the same of $\xi=(\phi/g)^{-1}$, we then have 
	\[\zeta(w_1) = \xi(w_1/(1+w_1)) \geq \xi(w_1) \geq \xi(C w_2) \geq \xi(r(w_2,\lambda))= \xi_{w_2}.\]  In addition, since $g$ is decreasing for $x$ large, we have 
	\[w_1 g(\zeta(w_1))/4 \leq h_{w_1}(x) \leq w_1 g(\xi_{w_2}), \qquad x\in [\xi_{w_2},\zeta(w_1)].
	\] 
	
	Then
	\begin{align*} 
		&\int_{\xi_{w_2}}^\infty \frac{\phi(x)}{(1-w_1)\phi(x)+w_1g(x)}\phi(x)\dx
		\\=& \int_{\xi_{w_2}} ^\infty \frac{1}{1+w_1\beta(x)} \phi(x) \dx
		\\ =& \int_{\xi_{w_2}}^\infty \phi(x)\dx -\int_{\xi_{w_2}}^\infty h_{w_1}(x) \dx  \\ 
		=& \bar{\Phi}({\xi_{w_2}})-\int_{{\xi_{w_2}}}^{\zeta(w_1)} h_{w_1}(x)\dx -\int_{\zeta(w_1)}^\infty h_{w_1}(x)\dx \\
		\geq& \bar{\Phi}(\xi_{w_2})- (\zeta(w_1)-\xi_{w_2}) w_1 g(\xi_{w_2}) - \bar{\Phi}(\zeta(w_1)). 
	\end{align*} We can similarly upper bound the integral, so we deduce the inequalities 
	\begin{equation}\label{eqn:ExpectationUpperLowerBound}
		\begin{split}
			&\frac{(1-w_1)}{\bar{\Phi}(\xi_{w_2})} \sqbrackets[\Big]{ \bar{\Phi}(\xi_{w_2})-\bar{\Phi}(\zeta(w_1)) - (\zeta(w_1)-\xi_{w_2})w_1 g(\xi_{w_2})} \\
			& \quad \leq E_{\theta_0=0}[\ell_{1,w_1}(X)\:|\: \ell_{1,w_2}(X)<\lambda] \\
			& \qquad \leq \frac{(1-w_1)}{\bar{\Phi}(\xi_{w_2})} \sqbrackets[\Big]{\bar{\Phi}(\xi_{w_2}) - \frac{1}{2} \bar{\Phi}(\zeta(w_1)) -\frac{1}{4} (\zeta(w_1)-\xi_{w_2}) w_1 g(\zeta(w_1)) }.
		\end{split} 
	\end{equation}
	Now, let us study in detail the order of each term. First, usual normal tail bounds, the definition of $\zeta$, the definition \cref{eqn:def:gInQuasiCauchy} of $g$ in the quasi-Cauchy case and Lemma~\ref{lem:ResultsFromCR18} (which tells us that $\zeta(w)^2\asymp \log (1/w)$) imply that for $w_1$ small enough
	\[\bar{\Phi}(\zeta(w_1))\asymp \frac{\phi(\zeta(w_1))}{\zeta(w_1)} \asymp w_1 \frac{g(\zeta(w_1))}{\zeta(w_1)} \asymp w_1 \zeta(w_1)^{-3}\asymp w_1 (\log (1/w_1))^{-3/2}.
	\] 
Similarly to the proof of \cref{lem:AsymptoticsOfFwLambda}, observe that $(1-\lambda)/w_2\geq w_1^{c-1}$ and for $n$ large and hence that $\log((1-\lambda)/w_2)\asymp \log(1/w_1)$.
Using the definition of $\xi$ and \cref{lem:ResultsFromCR18} (which tells us that $\xi(u)^2\asymp \log(1/u)$), we then obtain
	\[ \begin{split}
	\bar{\Phi}({\xi_{w_2}}) \asymp  \frac{\phi(\xi_{w_2})}{\xi_{w_2}} = r(w_2,\lambda) \frac{g(\xi_{w_2})}{\xi_{w_2}} \asymp \frac{w_2}{1-\lambda} \xi_{w_2}^{-3} &\asymp \frac{w_2}{1-\lambda} \brackets[\Big]{\log%^{-3/2}
		\brackets[\Big]{\frac{1-\lambda}{w_2}}}^{-3/2} \\ &\asymp \frac{w_1}{1-\lambda}\brackets[\big]{ \log(1/w_1)}^{3/2} %\log^{-3/2}\left(\frac{1}{w_1}\right).
\end{split}	\]
We deduce that $0\leq \bar{\Phi}(\zeta(w_1))/\bar{\Phi}(\xi_{w_2})\lesssim 1-\lambda$.

%Next, Lemma S-16 of \cite{CR18} tells us that there is a constant $C$ depending only on $g$ such that for any $t\in(0,1)$, for $w$ small enough (depending on $t$) 
%	\begin{equation}\label{eqn:CRLemS-16}\abs{\zeta(w)-\xi(r(w,t))}\leq \frac{2\abs{\log(t/(1-t))}+C}{\zeta(w)+\xi(r(w,t))}.\end{equation} 
%	Examining the proof reveals that in fact for $w$ smaller than a constant $\omega_0$ \emph{not} depending on $t$, we have 
%	\begin{equation}\label{eqn:CRAdaptedLemS-16}
%		\abs{\zeta(w)-\xi(r(w,t))}\leq \frac{2\abs{\log(t/(1-t))}+C}{\zeta(w)+\xi(r(w,t))} + \frac{2\abs{\log(t/(1-t))}}{\zeta(w)^2}.
%	\end{equation}
We apply \cref{lem:zeta-xi} with $w=w_1$ and with $t\in (0,1)$ such that $r(w_1,t)=r(w_2,\lambda)$. Observing that 	\[ \frac{1}{1-t}= r(w_1,t) \frac{1-w_1}{tw_1}= r(w_2,\lambda)\frac{1-w_1}{tw_1}\asymp \frac{\lambda}{1-\lambda} \asymp \frac{1}{1-\lambda},\] so that $w_1^c/(1-t)\to 0$, we deduce that
%	\begin{align*}
%		\abs{ \zeta(w_1)-\xi_{w_2}}&\leq \frac{2\abs{\log(t/(1-t)}+C}{\zeta(w_1)+\xi_{w_2}} + \frac{2\abs{\log(t/(1-t)}}{\zeta(w_1)^2}\lesssim \frac{ \log(1/(1-\lambda))}{\xi_{w_2}}\lesssim \frac{\log(1/(1-\lambda))}{\log^{1/2}((1-\lambda)/w_2)}.
%		\end{align*}
\[	 \zeta(w_1)-\xi_{w_2}\asymp \frac{\log(1/(1-t))}{(\log(1/w_1))^{1/2}}\asymp  \frac{\log(1/(1-\lambda))}{(\log(1/w_1))^{1/2}}.\]
%The latter denominator can be replaced by $\log^{1/2}((1-\lambda)/w_1)$ by changing the suppressed constant.
Again using that $\log((1-\lambda)/w_2)\asymp \log(1/w_1)$, it follows that 
	\[
(\zeta(w_1)-\xi_{w_2}) w_1 g(\zeta(w_1))\asymp 	(\zeta(w_1)-\xi_{w_2}) w_1 g(\xi_{w_2})\asymp %w_1\frac{\log(1/(1-\lambda))}{\log^{3/2}\left(1/w_1\right)} \asymp
 (1-\lambda)\log(1/(1-\lambda)) \bar{\Phi}({\xi_{w_2}}),
	\]
	since we showed above that $ w_1(1-\lambda)^{-1} (\log%^{-3/2}
	(1/w_1))^{-3/2} \asymp \bar{\Phi}({\xi_{w_2}})$.
%	Likewise, $(\zeta(w_1)-\xi_{w_2}) w_1 g(\zeta(w_1))\asymp (1-\lambda) \log(1/(1-\lambda)) \bar{\Phi}({\xi_{w_2}}).$ %In addition, 
%	\[
%	(\zeta(w_1)-\xi_{w_2}) w_1 g(\zeta(w_1))\asymp w_1 \log(1/(1-\lambda))(\log(1/w_1))^{-3/2}\asymp (1-\lambda) \log(1/(1-\lambda)) \bar{\Phi}({\xi_{w_2}}).
%	\]
	Feeding these bounds into \eqref{eqn:ExpectationUpperLowerBound} yields that for some $c_1,c_2,c_3>0$
	\begin{align*} 1-E_{\theta_0}[\ell_{1,w_1} \mid \ell_{1,w_2}<\lambda] &\geq w_1 +c_1(1-w_1)((1-\lambda)\log(1/(1-\lambda)),\\ 
		1-E_{\theta_0}[\ell_{1,w_1} \mid \ell_{1,w_2}<\lambda] &\leq w_1 + c_2(1-\lambda) +c_3((1-\lambda)\log(1/(1-\lambda)). \end{align*} 
	The lower bound follows upon discarding the term $w_1$ and noting that $1-w_1\geq 1/2$ for $n$ large; for the upper bound we note that $w_1+c_2(1-\lambda)=o\brackets[\big]{(1-\lambda)\log(1/(1-\lambda))}.$ %Finally, for the case where $w_1^c/(1-\lambda)\to 0$ for some $c<1$, as in the proof of \cref{lem:AsymptoticsOfFwLambda} we note that $\log((1-\lambda)/w)\asymp \log(1/w)$.
\end{proof}

\begin{lemma}\label{lem:zeta-xi} Suppose for sequences $w=w_n$ and $t=t_n$ taking values in $[0,1]$ that $t\to 1$ and $w^c/(1-t)\to 0$ for some $c<1$. %, and that $\log(1/w)/ \log ((1-t)/w)$ is bounded. 
	Then $\zeta(w)-\xi(r(w,t))\geq 0$ for $n$ large enough and, as $n\to \infty$, %\red{add $\log((1-t)/w)\asymp \log(1/w)$ to the statement of this lemma, and then quote repeatedly in previous two lemmas, rather than reproving several times}
		\begin{align}
			\zeta(w)-\xi(r(w,t))\asymp \frac{\log(1/(1-t))}{(\log (1/w))^{1/2}}.
	\end{align}
%The condition on $\log(1/w)/\log((1-t)/w)$ is satisfied if $w_1^c/(1-t)\to 0$ for some $c<1$.
\end{lemma}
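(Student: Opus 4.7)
The plan is to reduce everything to bounds on $\xi$ of the form $\xi(u)^2 = 2\log(1/u) + 2\log\log(1/u) + O(1)$ as $u\to 0$, which are available from \cref{lem:ResultsFromCR18}, and then difference these asymptotics using the identity
\[ \zeta(w) - \xi(r(w,t)) = \frac{\zeta(w)^2 - \xi(r(w,t))^2}{\zeta(w)+\xi(r(w,t))}. \]
The relationship \cref{eqn:zeta-xi-relationship} gives $\zeta(w) = \xi(w/(1+w))$, so both quantities are values of the same decreasing function $\xi$.

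First I would establish the sign claim $\zeta(w)\geq \xi(r(w,t))$. Since $\xi$ is decreasing (because $\phi/g$ is decreasing on $[0,\infty)$; see \cref{lem:monotonicity}), this is equivalent to $w/(1+w)\leq r(w,t)$, i.e., $(1-w)(1-t)\leq t(1+w)$, which simplifies to $1-w\leq 2t$. Since $t\to 1$ and $w\to 0$ (note the hypothesis $w^c/(1-t)\to 0$ forces $w\to 0$), this holds for $n$ large.

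Next I would compute $\zeta(w)^2$ and $\xi(r(w,t))^2$ separately. For $\zeta(w)^2$, using the bounds of \cref{lem:ResultsFromCR18} with $u = w/(1+w)$ gives
\[ \zeta(w)^2 = 2\log(1/w) + 2\log\log(1/w) + O(1), \]
since $\log((1+w)/w) = \log(1/w) + O(w)$ and correspondingly for the $\log\log$ term. For $\xi(r(w,t))^2$, write
\[ \log(1/r(w,t)) = \log(1/w) - \log(1/(1-t)) + \log((1-w)/t) = \log(1/w) - \log(1/(1-t)) + o(1). \]
The hypothesis $w^c/(1-t)\to 0$ for some $c<1$ gives $\log(1/(1-t))\leq c\log(1/w)$ for $n$ large, so $\log(1/r(w,t))\geq (1-c)\log(1/w)$, which in particular ensures $\log\log(1/r(w,t)) = \log\log(1/w) + O(1)$ and that $r(w,t)\to 0$ with $\xi(r(w,t))\asymp (\log(1/w))^{1/2}$. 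Applying the bounds of \cref{lem:ResultsFromCR18} to $\xi(r(w,t))^2$ and subtracting yields
\[ \zeta(w)^2 - \xi(r(w,t))^2 = 2\log(1/(1-t)) + O(1). \]
Since $t\to 1$, the right side is $\asymp \log(1/(1-t))$.

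Finally, because both $\zeta(w)$ and $\xi(r(w,t))$ are of order $(\log(1/w))^{1/2}$, the denominator in the identity above satisfies $\zeta(w) + \xi(r(w,t))\asymp (\log(1/w))^{1/2}$, and dividing gives the claimed asymptotic
\[ \zeta(w) - \xi(r(w,t))\asymp \frac{\log(1/(1-t))}{(\log(1/w))^{1/2}}. \]
The main obstacle is bookkeeping for the $\log\log$ and $O(1)$ terms: one has to check that the condition $w^c/(1-t)\to 0$ is exactly what's needed to keep $\log\log(1/r(w,t))$ and $\log\log(1/w)$ comparable up to $O(1)$, so that the dominant contribution to the difference is the clean $2\log(1/(1-t))$ term and not swamped by the auxiliary $\log\log$ and constant errors.
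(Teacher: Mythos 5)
Your proposal is correct and takes essentially the same route as the paper's proof: both difference the asymptotic expansion $\xi(u)^2 = 2\log(1/u) + 2\log\log(1/u) + O(1)$ (from \cref{lem:ResultsFromCR18}) at $u = w/(1+w)$ and $u = r(w,t)$, use the hypothesis $w^c/(1-t)\to 0$ precisely to keep $\log\log(1/r(w,t)) - \log\log(1/w)$ bounded, and then divide the resulting $\zeta(w)^2 - \xi(r(w,t))^2 \asymp \log(1/(1-t))$ by $\zeta(w)+\xi(r(w,t))\asymp (\log(1/w))^{1/2}$. Your direct monotonicity check for the sign claim (that $w/(1+w)\leq r(w,t)$ reduces to $1-w\leq 2t$) is a slightly cleaner way to obtain what the paper extracts implicitly from its lower bound on the squared difference, but the substance of the argument is the same.
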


\begin{proof}
	For $1-t\leq 1/2$ and $w/(1-t)\leq 0.5$, we have $r(w,t)=\frac{tw}{(1-t)(1-w)}\geq \frac{wt}{1-t} \geq 0.5 \frac{w}{1-t}$, so that $\log\log (1/r(w,t))\leq \log(\log(2)+\log((1-t)/w)\leq \log(2)+\log\log((1-t)/w) $.
	Hence, using bounds on $\zeta$ and $\xi$ from \cref{lem:ResultsFromCR18} and noting that $\log(1/w)\geq \log((1-t)/w)$ and that $\log(1/(1-w))$ is bounded, we see that for $1-t$ and $w/(1-t)$ small enough  we have for constants $c,c'$ 
%	\begin{align*} &
%			\zeta(w)^2-\xi(r(w,t))^2 \\ \geq& 2\log (1/w)+2\log \log(1/w)+c -  (2\log(1/r(w,t)) + 2\log\log (1/r(w,t)) +6\log 2)\\
%			\geq& 2 \log(t/(1-t)) +2\log( \log(1/w) / \log ((1-t)/w)) +c'\\
%			\geq& 2 \log(1/(1-t)) +c'\\
%			\geq& \log(1/(1-t)).
%		\end{align*}
	\begin{align*} &
		\zeta(w)^2-\xi(r(w,t))^2 \\ \geq& 2\log \brackets[\big]{\tfrac{1}{w}}+2\log \log \brackets[\big]{\tfrac{1}{w}} +c -  \brackets[\Big]{2\log\brackets[\big]{\tfrac{1}{r(w,t)}} + 2\log\log \brackets[\big]{\tfrac{1}{r(w,t)}} +6\log 2}\\
		\geq& 2 \log(t/(1-t)) +2\log( \log(1/w) / \log ((1-t)/w)) +c'\\
		\geq& 2 \log(1/(1-t)) +c'\\
		\geq& \log(1/(1-t)).
	\end{align*}
	Conversely, for $w\leq 1/2$, we have $r(w,t)=\frac{tw}{(1-t)(1-w)}\leq 2 \frac{w}{1-t}$, so that \[\log\log (1/r(w,t))\geq \log(\log(0.5)+\log((1-t)/w)\geq \log(0.5)+\log\log((1-t)/w),\] provided $n$ is large enough that $\log((1-t)/w)+\log(0.5)\geq 0.5\log((1-t)/w)$. Note also, as in the proof of \cref{lem:AsymptoticsOfFwLambda} that the condition on $w_1^c/(1-\lambda)\to 0$ implies that $\log(1/w)/\log((1-t)/w)$ is bounded. Again using bounds on $\zeta$ and $\xi$ from \cref{lem:ResultsFromCR18}, %and now also using the assumption that $\log(1/w)/\log((1-t)/w)$ is bounded, 
	for $1-t$ and $w/(1-t)$ small enough we deduce that for constants $C,C',C''$ we have
%		\begin{align*} &
%			\zeta(w)^2-\xi(r(w,t))^2 \\ \leq &2\log (1/w)+2\log \log(1/w)+C -  (2\log(1/r(w,t)) + 2\log\log (1/r(w,t)) +2\log 2)\\
%			\leq & 2 \log(t/(1-t)) +2\log( \log(1/w) / \log ((1-t)/w)) +C'\\
%			\leq &2 \log(1/(1-t))+C''.
%		\end{align*}
		\begin{align*} & \zeta(w)^2-\xi(r(w,t))^2  \\ \leq &2\log \brackets[\big]{\tfrac{1}{w}}+2\log \log\brackets[\big]{\tfrac{1}{w}}+C -  \brackets[\Big]{2\log\brackets[\big]{\tfrac{1}{r(w,t)}} + 2\log\log \brackets[\big]{\tfrac{1}{r(w,t)}} +2\log 2}\\
		\leq & 2 \log(t/(1-t)) +2\log( \log(1/w) / \log ((1-t)/w)) +C'\\
		\leq &2 \log(1/(1-t))+C''.
	\end{align*}
 This entails
		\[
			\zeta(w)-\xi(r(w,t)) \asymp \frac{\log(t/(1-t))}{\zeta(w)+\xi(r(w,t))},
		\]
	and the result thus follows from
		$
		\zeta(w)\leq \zeta(w)+\xi(r(w,t))\leq 2\zeta(w)
		$ (the latter being implied by the above calculations)
		and $\zeta(w)\asymp (\log (1/w))^{1/2}$.
\end{proof}

\begin{lemma} \label{lem:c1alpha}
	For any $\beta>0$ there exists $c_1=c_1(\beta)>0$ such that for any $s_n>c_1 (\log n)^2/\log \log n$ satisfying $n/s_n\to \infty$ we have for $n$ large enough
	\[ \frac{\log \log (n/s_n)}{\log (n/s_n)} \geq \alpha \brackets[\Big]{\frac{\log s_n}{s_n}}^{1/2}.\]
	Consequently, the conclusions of \cref{thm:FDRConvergenceWithRate} hold upon replacing the assumption $s_n\geq (\log n)^3$ with $s_n\geq b(\log n)^2/\log\log n$ for some large enough $b$.
\end{lemma}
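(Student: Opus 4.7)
The plan is to use monotonicity to reduce to the extremal value $s_n^* := c_1(\log n)^2/\log\log n$ and then verify the inequality there by direct asymptotic computation, choosing $c_1$ large enough in terms of the coefficient on the right-hand side. Set $m_n = n/s_n$. The map $s \mapsto \log s/s$ is strictly decreasing on $(e,\infty)$, so the right-hand side $(\log s_n/s_n)^{1/2}$ decreases as $s_n$ grows; and $\log\log m/\log m = f(\log m)$ with $f(y) = \log y/y$ is also decreasing on $(e,\infty)$, while $\log m$ decreases as $s_n$ grows, so the left-hand side $\log\log(n/s_n)/\log(n/s_n)$ increases in $s_n$ (for $n$ large enough that $n/s_n > e^e$). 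Hence it suffices to prove the inequality at $s_n = s_n^*$; at this value $n/s_n^* = n\log\log n/(c_1(\log n)^2) \to \infty$, so the premise is still met.

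At $s_n = s_n^*$, a direct computation gives $\log s_n^* = 2\log\log n - \log\log\log n + \log c_1 \leq 3\log\log n$ for $n$ large, whence
\[
\sqrt{\log s_n^*/s_n^*} \;\leq\; \sqrt{\tfrac{3}{c_1}}\;\frac{\log\log n}{\log n}.
\]
Similarly $\log m_n^* = \log n - 2\log\log n + O(\log\log\log n) \geq \tfrac{1}{2}\log n$ and $\log\log m_n^* \geq \log\log n - 1 \geq \tfrac{1}{2}\log\log n$ for $n$ large, so $\log\log m_n^*/\log m_n^* \geq \tfrac{1}{4}(\log\log n)/\log n$. Choosing $c_1 = 48\alpha^2$ then yields $\alpha\sqrt{3/c_1} \leq 1/4$, which establishes the inequality. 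This step is essentially routine bookkeeping in the sub-leading $\log\log\log n$ terms.

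For the consequence, I would inspect the proof of \cref{thm:FDRConvergenceWithRate}: the hypothesis $s_n \geq (\log n)^3$ enters only through \eqref{eqn:nu<delta}, i.e.\ $\nu_n = o(\delta_n)$, hence $\nu_n = o(\eps_n)$. Applying the present lemma with a sufficiently large coefficient on its right-hand side gives instead $\nu_n \leq K\eps_n$ for any prescribed $K>0$, provided $c_1$ is taken large enough. This weaker bound is in fact all that is needed: each appearance of $\max(\nu_n,\rho_n,\delta_n)$ in the bounds for $\FDR(\vphi^{\Cl};\theta_0) - t$ is multiplied by an absolute constant and added to or subtracted from a positive multiple of $\eps_n$, and the \emph{lower} bound on $\FDR - t$ requires the coefficient of $\eps_n$ to stay strictly positive after the adjustment. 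This last requirement is the main (minor) obstacle and pins down the value of $b = b(t)$ through the constants $d',A''$ appearing in the FDR lower-bound argument.
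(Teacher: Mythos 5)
Your proof is correct and follows essentially the same route as the paper's: both rely on the monotonicity of $s\mapsto\log\log(n/s)/\log(n/s)$ (increasing) and $s\mapsto(\log s/s)^{1/2}$ (decreasing) to reduce to the boundary value $s_n^*=c_1(\log n)^2/\log\log n$, and both then estimate the two sides there, picking $c_1$ large; the paper is marginally slicker in lower-bounding the left side directly by $p(1)=\log\log n/\log n$ rather than re-estimating $\log\log m_n^*/\log m_n^*$, which avoids the (harmless but slightly muddled) bookkeeping around the factor $1/4$ in your display. Your treatment of the consequence matches the paper's observation that the sparsity lower bound enters only through guaranteeing $\nu_n\leq\beta\varepsilon_n$ for a small enough $\beta$, which suffices in the FDR bounds.
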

\begin{proof}
	Write \[ p(s)=\frac{\log \log (n/s)}{\log (n/s)}, \quad q(s)=\brackets[\Big]{\frac{\log s}{s}}^{1/2}.\] Since $u^{-1}\log u\leq (u')^{-1}\log u'$ for $u\geq u'\geq e$, we see that $p(s_n)\geq p(1)$ (apply with $u=\log n$, $u'=\log (n/s_n)$, and note $u'>e$ for $n$ large enough since $n/s_n\to \infty$). Similarly we notice that $q$ is decreasing, at least on $s>e$, so that $q(s_-)\geq q(s_n)$ for $s_-:=c_1 (\log n)^2/\log \log n$. It therefore suffices to show that $p(1)\geq \beta q(s_-)$.
	
	Observe that for $n$ large enough we have $s_-\leq (\log n)^2$, hence $\log s_- \leq 2\log \log n.$ It follows that $s_-/\log s_-\geq\frac{1}{2} c_1 (\log n / \log \log n)^2.$ Thus, \[q(s_-)\leq \brackets[\Big]{\frac{2}{c_1}}^{1/2} \frac{\log \log n}{\log n}= (c_1 /2)^{-1/2} p(1).\] The result follows for $c_1=2\beta^2$.
	
	To see that the proof of \cref{thm:FDRConvergenceWithRate} holds under the weaker condition on $s_n$, note that the lower bound on $s_n$ was not assumed for any of the core lemmas, and in the proof of the theorem itself was only used to show that for any $\beta>0$, $\nu_n\leq \beta\eps_n$ for $n$ large enough.
\end{proof}

Finally, for the reader's convenience, we gather some results together whose proofs are omitted because they can be found elsewhere. The following lemma collects results from \cite{CR18}; we remark that while the setting of that paper assumes polynomial sparsity, the results gathered here do not depend on that assumption.
Some of the following results are originally stated with dependence on $g$ and a related parameter $\kappa\in[1,2]$; here, with $g$ explicitly given in \cref{eqn:def:gInQuasiCauchy}, we substitute $\kappa=2$ and use the bounds $\norm{g}_\infty:=\sup_x \abs{g(x)}\leq 1/\sqrt{2\pi}$ and  $x^{-2}/(2\sqrt{2\pi})\leq g(x)\leq x^{-2}/\sqrt{2\pi}$ for $\abs{x}\geq 2$ to simplify expressions.

\begin{lemma}[Results from \cite{CR18}] \label{lem:ResultsFromCR18}
	\begin{lemenum}
			\item Lemma S-10: $\ell_{i,w_-}(X)\geq q_{i,w_-}(X)$.
		\item Lemma S-12: $\xi(u)\sim (2\log (1/u))^{1/2}$, and more precisely, for $u$ small enough, \begin{align*}
			\xi(u) &\geq \brackets[\Big]{2\log (1/u) + 2\log\log(1/u)+2\log 2}^{1/2} \\
			\xi(u) &\leq \brackets[\Big]{ 2\log(1/u) + 2\log\log (1/u) +6\log 2}^{1/2}.
		\end{align*}
%			\item Lemma S-12: $\xi(u)\sim (-2\log u)^{1/2}$, and more precisely, for $u$ small enough, \begin{align*}
%			\xi(u) &\geq \brackets[\Big]{2\log (1/u) - 2\log g \brackets[\Big]{\sqrt{-2 \log (Cu)}} - \log (2\pi)}^{1/2} \\
%			\xi(u) &\leq \brackets[\Big]{-2\log u - 2 \log g \brackets[\Big]{ \brackets[\big]{ -2\log u + 5 \Lambda (-\log u)^{1/2}}^{1/2}}-\log (2\pi)}^{1/2},
%		\end{align*}
%		where $C=(2\pi)^{1/2} \norm{g}_\infty$.
%		\item Lemma S-14: $\zeta(w)\sim (2\log(1/w))^{1/2}$. More precisely, for a constant $C$ and for $w$ small enough,
%%		\[\zeta(w)\leq (-2\log w-2\log g\brackets[\Big]{\sqrt{-5\log w}}+C)^{1/2}.\]
%	\[\zeta(w)\leq (2\log (1/w)+2\log \log(1/w)+C)^{1/2}.\]
		\item Lemma S-14: $\zeta(w)\sim (2\log(1/w))^{1/2}$. More precisely, for  constants $c,C\in\mathbb{R}$ and for $w$ small enough,
%		\[\zeta(w)\leq (-2\log w-2\log g\brackets[\Big]{\sqrt{-5\log w}}+C)^{1/2}.\]
	\begin{multline*} (2\log (1/w)+2\log \log(1/w)+c)^{1/2} \\ \leq \zeta(w)\leq (2\log (1/w)+2\log \log(1/w)+C)^{1/2}.
	\end{multline*}
		\item Proof of Lemma S-15: $\zeta(w)- \chi(r(w,t)) \geq  c_1 \frac{\log\log(1/w)}{ \zeta(w)}$  for a universal constant $c_1>0$, for $w$ small enough (smaller than a threshold that might depend on $t$).
	\item Eq.~(S-15): for some constant $C'>0$ and $u\in(0,1]$ small enough,
	\[ \chi(u)\geq % \left(-2\log \left(u\overline{G}\left(\overline{\Phi}^{-1}(u)\right)\right) -\log \log (1/u)-C'\right)^{1/2}.
	\left(2\log (1/u) - \log\log(1/u) - C'\right)^{1/2}.
	\]
		\item Lemma S-20: there exists $c_1>0$ such that for any $x\in\RR$ and $w\in(0,1]$, $\abs{\beta(x)/(1+w\beta(x))}\leq (w \wedge c_1)^{-1}$.
		\item Lemma S-21: there exists $c_1>0$ such that $m_1(x,w)\leq (\min(c_1,w))^{-1}$ for all $x\in \RR$. The function $\tilde{m}$ is continuous, non-negative and increasing. For any fixed $\tau$ the function $w\mapsto m_1(\tau,w)$ is continuous and decreasing.
			\item Lemma S-23: $\tilde{m}(w)\asymp \zeta(w) g(\zeta(w))\asymp \zeta(w)^{-1}$.
	\item Proof of Lemma~S-23: for universal constants $C_1,C_2>0$ and $w$ small enough, $\tilde m(w) \leq C_1 \zeta(w)^{-3} +2\overline{G}(\zeta(w))$ and  $\tilde m(w) \geq 2\overline{G}(\zeta(w)) (1-C_2 \zeta(w)^{-3} )$.
		\item Lemma S-26, Corollary S-28: for $m_2(\theta_{0,i},w)=E_{\theta_0} (\beta(X_i)/[1+w\beta(X_i)])^2 $, there exist constants $C,\omega_0,M_0>0$ such that for all $w\leq \omega_0$ and all $\tau\geq M_0$
	\begin{align*} m_2(0,w)&\leq C\bar{\Phi}(\zeta(w))w^{-2},\\
		m_2(\tau,w)&\leq C m_1(\tau,w)w^{-1}\end{align*}
%\item (S-17):  for a constant $C>0$, we have for $w$ small enough,
%$$\zeta(w)\leq  \left(-2\log w-2\log g\left(\sqrt{-5 \log w}\right)+C\right)^{1/2}.$$
	\item Lemma S-40: $\bar{\Phi}(x)\sim x^{-1}\phi(x)$ as $x\to \infty$. More precisely,
\[ \frac{x^2}{1+x^2} \frac{\phi(x)}{x} \leq \bar{\Phi}(x)\leq \frac{\phi(x)}{x}.\]
	\end{lemenum}
\end{lemma}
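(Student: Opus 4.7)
The plan is to verify each item by direct appeal to the corresponding numbered lemma or equation of \cite{CR18}, with two bookkeeping checks: (i) that the quoted proof nowhere invokes the polynomial sparsity hypothesis $s_n \leq n^\nu$ imposed globally in that paper, and (ii) that the general $\kappa$-parametrised slab used in CR18 specialises correctly to the quasi-Cauchy case $\kappa=2$ after plugging in the explicit formula \cref{eqn:def:gInQuasiCauchy} and the bounds $\|g\|_\infty \leq 1/\sqrt{2\pi}$ and $g(x) \asymp x^{-2}$ for $|x|\geq 2$.

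First I would dispatch the items that are verbatim restatements. Items (a), (f), (g), (j), (k) are pure statements about fixed analytic objects (the functions $\phi$, $g$, $\beta$, $\bar\Phi$, $\bar G$, $m_1$, $m_2$) or about random variables for a single coordinate, with no dependence on the global sparsity level: the proofs in CR18 are self-contained manipulations and go through unchanged. Item (a), which compares $\ell$- and $q$-values coordinatewise, reduces to monotonicity of $\bar\Phi/\bar G$ versus $\phi/g$ (itself implied by Lemma~\ref{lem:monotonicity}); item (k) is the classical Mills ratio bound. Item (h) then follows from item (k) applied at $x=\zeta(w)$ together with the identity $\beta(\zeta(w)) = 1/w$, which gives $\phi(\zeta(w)) = w g(\zeta(w))(1+o(1))$ as $w \to 0$, from which both $\tilde m(w) \asymp \zeta(w) g(\zeta(w))$ and $\zeta(w)g(\zeta(w)) \asymp \zeta(w)^{-1}$ (using the explicit quasi-Cauchy formula) are routine.

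For the refined two-sided asymptotics (b), (c), (d), (e), (i) I would reproduce the arguments of \cite[Lemmas S-12, S-14, S-15, S-23]{CR18} directly. The key input is the expansion $\log(g(x)/\phi(x)) = x^2/2 - 2\log x + O(1)$ as $x\to\infty$, which comes from the explicit formula \cref{eqn:def:gInQuasiCauchy}. Inverting the equation $\phi(\xi(u))/g(\xi(u)) = u$ by writing $\xi(u)^2 = 2\log(1/u) + 2\log\log(1/u) + r(u)$ and solving for the remainder $r(u) = O(1)$ yields (b); (c) follows via the relation $\zeta(w) = \xi(w/(1+w))$ from \cref{eqn:zeta-xi-relationship}; (e) follows by the same inversion procedure with $\phi/g$ replaced by $\bar\Phi/\bar G$, using (k) to get $\bar\Phi(x)/\bar G(x) \asymp (\phi/g)(x)$ up to polynomial-in-$x$ factors. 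For (d), I would subtract the lower bound from (c) at argument $w$ from the upper bound from (e) at argument $r(w,t)$, tracking the $\log\log$ terms: the $\log\log(1/w)$ cancellation on the quadratic level and $\log(1/r(w,t)) - \log(1/w) = \log(t/((1-t)(1-w)))$ gives the stated lower bound $c_1 \log\log(1/w)/\zeta(w)$. Finally, (i) is obtained by splitting $\tilde m(w) = \int h_w(x)\,dx$ at $\zeta(w)$ and bounding the integrand by $\phi(x)$ above the cut and by $wg(x)$ below, exactly as in the proof of \cite[Lemma S-23]{CR18}.

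The main obstacle is purely the bookkeeping of verifying that none of the cited CR18 proofs smuggle in the polynomial sparsity assumption. For the items collected here, the statements involve only the fixed analytic functions $\xi$, $\zeta$, $\chi$, $\bar\Phi$, $\bar G$, $\tilde m$, $m_1$, $m_2$ evaluated at a generic small argument $w$ (no $n$ or $s_n$ dependence appears explicitly in the conclusions), so the robustness is essentially automatic, but I would spot-check item (j), whose original statement in CR18 is phrased in terms of $w$ satisfying a lower bound that is compatible with our definitions \cref{eqn:def:w-,eqn:def:w+} of $w_\pm$ via Lemma~\ref{lem:ExistsW+-}. A secondary minor task is to confirm that the simplifications arising from $\kappa=2$ (i.e.\ replacing the general tail bound on $g$ by $g(x) \asymp x^{-2}$) give the stated constants rather than weaker $\kappa$-dependent ones; this is immediate from \cref{eqn:def:gInQuasiCauchy}.
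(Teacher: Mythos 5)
Your proposal matches the paper's treatment: \cref{lem:ResultsFromCR18} is stated explicitly as a collection of results imported from \cite{CR18} with proofs omitted, and the paper's only "argument" is precisely the two bookkeeping points you identify — that none of the cited proofs rely on the polynomial sparsity assumption, and that the general $\kappa$-parametrised statements specialise to $\kappa=2$ via the explicit quasi-Cauchy formula and the bounds $\norm{g}_\infty\leq 1/\sqrt{2\pi}$, $g(x)\asymp x^{-2}$. Your additional sketches of how the individual CR18 proofs go (inversion of $\phi/g$, Mills ratio, the split of $\tilde m$ at $\zeta(w)$) are consistent with the cited arguments and go beyond what the paper records.
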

\begin{lemma}[Bernstein's inequality] \label{lem:Bernstein} Let $U_i, i\leq n$ be independent random variables taking values in $[0,1]$. Then, for any $u>0$, 
	\[P\brackets[\Big]{\sum_{i=1}^n (U_i -E [U_i])\geq u}\leq \exp\brackets[\Big]{-\frac{u^2/2}{\sum_{i=1}^n \Var(U_i) + u/3}},\]
	and	  \[P\brackets[\Big]{\abs[\Big]{\sum_{i=1}^n (U_i -E [U_i])}\geq u}\leq 2 \exp\brackets[\Big]{-\frac{u^2/2}{\sum_{i=1}^n \Var(U_i) + u/3}}.\]
\end{lemma}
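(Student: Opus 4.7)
\medskip

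\noindent\textbf{Proof proposal.} The inequality is classical, but I sketch a self-contained Cramér–Chernoff style proof. First I would centre the variables by setting $Y_i = U_i - E[U_i]$, so that the $Y_i$ are independent, mean-zero, and satisfy $|Y_i|\le 1$ (since $U_i\in[0,1]$ implies both $Y_i\ge -E U_i\ge -1$ and $Y_i\le 1-E U_i\le 1$). Writing $V = \sum_{i=1}^n \Var(U_i) = \sum_{i=1}^n E[Y_i^2]$, the Markov–Chernoff inequality gives, for every $\lambda>0$,
\[
	P\Big(\sum_{i=1}^n Y_i \ge u\Big) \le e^{-\lambda u}\,\prod_{i=1}^n E[e^{\lambda Y_i}],
\]
where independence was used for the product. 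The game is then to upper bound $E[e^{\lambda Y_i}]$ and to optimise over $\lambda$.

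The key step is the following Bennett-type bound on the moment generating function: since $|Y_i|\le 1$, for any integer $k\ge 2$ we have $|Y_i|^k \le Y_i^2$, and hence $E[Y_i^k]\le E[Y_i^2]=\sigma_i^2$. Expanding the exponential and using $E[Y_i]=0$,
\[
	E[e^{\lambda Y_i}] = 1 + \sum_{k\ge 2}\frac{\lambda^k}{k!}E[Y_i^k] \le 1 + \sigma_i^2\sum_{k\ge 2}\frac{\lambda^k}{k!} = 1 + \sigma_i^2(e^\lambda - 1 - \lambda) \le \exp\bigl(\sigma_i^2(e^\lambda-1-\lambda)\bigr),
\]
where the last step uses $1+x\le e^x$. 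Taking products and logarithms yields $\log \prod_i E[e^{\lambda Y_i}] \le V(e^\lambda-1-\lambda)$.

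To pass from this Bennett-type bound to Bernstein form, I would invoke the elementary inequality
\[
	e^\lambda - 1 - \lambda \le \frac{\lambda^2/2}{1-\lambda/3}, \qquad 0\le \lambda < 3,
\]
which follows from the termwise estimate $k!\ge 2\cdot 3^{k-2}$ for $k\ge 2$, whence $\sum_{k\ge 2}\lambda^k/k! \le (\lambda^2/2)\sum_{k\ge 0}(\lambda/3)^k$. Substituting, the one-sided probability is bounded by $\exp\bigl(-\lambda u + V\lambda^2/(2(1-\lambda/3))\bigr)$. The optimal choice is $\lambda^\star = u/(V+u/3)\in(0,3)$; a direct computation (using $1-\lambda^\star/3 = 3V/(3V+u)$) shows the exponent equals $-u^2/(2(V+u/3))$, which is the desired bound. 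For the two-sided version I would apply the same argument to $-Y_i$, whose variance is also $\sigma_i^2$ and which still satisfies $|-Y_i|\le 1$, and conclude by a union bound.

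The only mildly technical point is verifying the elementary inequality for $e^\lambda-1-\lambda$ and checking that $\lambda^\star$ lies in the range of validity $(0,3)$; both are routine.
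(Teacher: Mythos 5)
Your proof is correct and is the standard Cram\'er--Chernoff derivation of Bernstein's inequality (Bennett-type bound on the moment generating function via $E[Y_i^k]\le \sigma_i^2$ for $k\ge 2$, the termwise estimate $k!\ge 2\cdot 3^{k-2}$, and optimisation at $\lambda^\star=u/(V+u/3)$); the paper states this lemma without proof as a classical fact, so there is no in-paper argument to compare against. The only point worth a half-sentence in a write-up is the degenerate case $V=0$, where $\lambda^\star=3$ sits on the boundary of the admissible range but the inequality holds trivially since the left-hand side vanishes.
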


\section{Notation}\label{sec:notation} 
\begin{description}
	\item[$X$]$=(X_1,\dots,X_n)$ the data, with $X_i=\theta_{i}+\eps_i$, where the $\eps_i$ are i.i.d.\ Gaussians $\eps_i\sim \Nn(0,1)$.  
	\item[$\theta_0$] the unknown true parameter in $\ell_0(s_n,v_n)$.
	\item[$P_{\theta_0}$] the law of $X$ with parameter $\theta_0$, $E_{\theta_0}$ the associated expectation.
	\item[$\ell_0(s)$]$=\braces{\theta\in \RR^\NN : \#\braces{1 \leq i \leq n : \theta_i\neq 0}=\norm{\theta}_{\ell_0}\leq s}$.
	\item[$S_0$]$=\braces{i : \theta_{0,i}\neq 0}$ the support of the vector $\theta_0\in\ell_0(s)$.
	\item[$\ell_0(s_n,v_n)$]$=\braces{\theta \in \ell_0(s_n) : \abs{\theta_i}\geq \sqrt{2\log(n/s_n)}+v_n \text{ for } i\in S_0,~\abs{S_0}=s_n}$, with $s_n\to \infty$, $n/s_n\to \infty$, $v_n\to \infty$. (And $s_n\geq (\log n)^3$, $v_n\geq 3(\log\log (n/s_n))^{1/2}$ for \cref{thm:FDRConvergenceWithRate,thm:QvalueControlsFDR}.) 
	\item[$\Pi_w$] the spike-and-slab prior \cref{eqn:def:SpikeAndSlabPrior}, under which $\theta_i=0$ with probability $1-w$ and is drawn from some (implicitly defined) density $\gamma$ with probability $w$, independently of the other $\theta_j$. 
	\item[$\Pi_w(\cdot \mid X)$] the induced posterior on $\theta$, see before \cref{eqn:def:Lvals}.
	\item[$\phi,g$] the standard Gaussian density and the quasi-Cauchy density \\ $g(x)=(2\pi)^{-1/2}x^{-2}(1-e^{-x^2/2})$ which respectively are the laws of $X_i$ under $\theta_i=0$ and under $\theta_i\sim \gamma$.
	\item[$\bar{\Phi},\bar{G}$] the upper tail distributions for $\phi,g$, e.g.\ $\bar{\Phi}(x)=\int_x^\infty \phi(t)\dt$.
	\item[$\ell_{i,w}(X)=$]$=\Pi_w(\theta_i=0\mid X) =\ell(w;X_i)=\frac{(1-w)\phi(X_i)}{(1-w)\phi(X_i)+wg(X_i)}.$ Also just denoted $\ell_{i,w}$ in places.
	\item[$q_{i,w}(X)$]$=q(w;X_i)=\frac{(1-w)\bar{\Phi}(\abs{X_i})}{(1-w)\bar{\Phi}(\abs{X_i})+w\bar{G}(\abs{X_i})}$ as in \cref{eqn:def:qvals}. Also just denoted $q_{i,w}$ at times.
	\item[$L(w)$] the log-likelihood \cref{eqn:def:logLikelihood}, $S(w)=L'(w)=\sum_{i=1}^n \beta(X_i)/(1+w\beta(X_i))$ the score function.
	\item[$\beta(x)$]$=(g/\phi)(x)-1$.
	\item[$\zeta(w)$]$=\beta^{-1}(1/w),~w\in(0,1]$.
	\item[$\xi$]$=(\phi/g)^{-1},$ $\chi=(\bar{\Phi}/\bar{G})^{-1}$. %: $(0,(\phi/g)(0)] \to [0,\infty)$
%	\item[$\chi$]$=(\bar{\Phi}/\bar{G})^{-1}$
	\item[$\tilde{m}(w)$]$=-E_0 [\beta(X)/(1+w\beta(X))]=-\int_{-\infty}^{\infty} \beta(t)/(1+w\beta(t)) \phi(t)\dt$.
	\item[$\tilde{m}_1(\tau,w)$]$=E_\tau[\beta(X)/(1+w\beta(X))]=\int_{-\infty}^{\infty} \beta(t)/(1+w\beta(t))\phi(t-\tau)\dt$.
	\item[$r(w,t)$]$=wt(1-w)^{-1}(1-t)^{-1}$.
	\item[$F_w(x)$]$=P_{\theta_0=0}(\ell_{i,w}(X)<x)=2\bar{\Phi}(\xi(r(w,x)))$.
	\item[$\FDP,\FDR,\FNR$] the usual false discovery proportion, false discovery rate, and false negative rate, see \cref{eqn:def:FDP,eqn:def:FDR,eqn:def:FNR}. 
	\item[$\BFDR$] the Bayesian FDR, i.e.\ the FDR averaged over draws $\theta$ from the prior (see \cref{eqn:def:BFDR}).
	\item[$\postFDR_w(\vphi)$]$=E_{\theta \sim \Pi_w(\cdot \mid X}[\FDP(\vphi;\theta)]=\frac{\sum_{i=1}^n \ell_{i,w} \vphi_i}{1\vee(\sum_{i=1}^n \vphi_i)}.$
	\item[$\hat{w}$]$=\argmax_{w\in [1/n,1]} L(w)$, the maximum likelihood estimator for $w$.
	\item[$w_\pm$] quantities which will be used to upper and lower bound $\hat{w}$ with high probability, defined in \cref{eqn:def:w+,eqn:def:w-}.
	\item[$\hat{\lambda}$]$=\sup\braces{\lambda\in[0,1] : \postFDR_{\hat{w}} (\vphi_{\lambda,\hat{w}}) \leq t}.$
	\item[$\lambda_\pm$] quantities which will be used to upper and lower bound $\hat{\lambda}$ with high probability, defined in \cref{eqn:def:lambda+,eqn:def:lambda-}.
	\item[$\vphi_{\lambda,w}(X)$]$=(\II\braces{\ell_{i,w}(X)<\lambda})_{i\leq n}$.
	\item[$\vphi^{\Cl}$]$=\vphi_{\hat{\lambda},\hat{w}}$ (see also the beginning of \cref{sec:Bayesian-multiple-testing-procedures} for a direct definition).
	\item[$\vphi^{\qval}$]$=(\II\braces{q_{i,\hat{w}}<t})_{1\leq i\leq n}$.
	\item[$V_{\lambda,w}$]$=\#\braces{i\not \in S_0 : \ell_{i,w}<\lambda}$ the number of false discoveries made by $\vphi_{\lambda,w}$.
	\item[$V_w'$]$=\#\braces{i\not \in S_0 : q_{i,w}<t}$.
	\item[$\nu_n,\delta_n,\rho_n,\eps_n$] see \cref{eqn:def:nu,eqn:def:rho,eqn:def:eps,eqn:def:delta} ($\nu_n=(s_n/\log s_n)^{-1/2}$, $\delta_n=(\log (n/s_n))^{-1}$, $\eps_n=\delta_n\log\log(n/s_n)$, $\rho_n=e^{-v_n^2/9}$ with $v_n$ the signal strength as in \cref{eqn:def:StrongSignalClass}). In the setting of \cref{thm:FDRConvergenceWithRate}, $\eps_n$ is the largest of these asymptotically, see \cref{eqn:nu<delta,eqn:p<delta}.
	\item[$K_n$]=$\#\braces{i \in S_0 : \ell_{i,w_-}<\delta_n}$.
		\item[$\lesssim,\gtrsim,\asymp,\sim,\ll,o,O$:] For sequences $a_n,b_n$, $a_n\lesssim b_n$ or $a_n=O(b_n)$ means ($b_n\geq 0$ and) there exists a constant $C$ s.t. $\abs{a_n} \leq Cb_n$, and $C$ is independent of $n$ (and other arguments of $a,b$). $a_n\gtrsim b_n$ means $b_n\lesssim a_n$. $a_n\asymp b_n$ means $a_n\lesssim b_n$ and $a_n\gtrsim b_n$. $a_n\sim b_n$ means $a_n/b_n\to 1$, and $a_n\ll b_n$ or $a_n=o(b_n)$ means $a_n/b_n\to 0$. For functions $f,g$, all these relations are defined correspondingly.
\end{description}

\end{document}